\newcommand{\algname}{\texttt{F$^2$SA}}
\newcommand{\algnametwo}{\texttt{F$^3$SA}}
\newenvironment{proof}{\paragraph{\it Proof.}}{\hfill$\square$}
\theoremstyle{plain}
\newtheorem{theorem}{Theorem}[section]
\newtheorem{proposition}[theorem]{Proposition}
\newtheorem{lemma}[theorem]{Lemma}
\newtheorem{corollary}[theorem]{Corollary}
\theoremstyle{definition}
\newtheorem{definition}[theorem]{Definition}
\newtheorem{assumption}{Assumption}
\theoremstyle{remark}
\newcommand{\poly}{\mathrm{poly}}
\newcommand{\mF}{\mathcal{F}}	
\newcommand{\mD}{\mathcal{D}}	
\newcommand{\mL}{\mathcal{L}}	
\newcommand{\vdot}[2]{\langle #1, #2 \rangle}
\newcommand{\Exs}{\mathbb{E}}
\newcommand{\tssum}{\textstyle \sum}
\newcommand{\grad}{\nabla}
\newif\ifdraft
\newcommand{\blue}[1]{\textcolor{blue}{#1}}
\newcommand{\dkcomment}[1]{\ifdraft {\bf{{\blue{{Dohyun --- #1}}}}}\else\fi}
\newcommand{\I}{\mathcal{I}}
\newcommand{\J}{\mathcal{J}}
\title{\bf{\LARGE{A Fully First-Order Method for Stochastic Bilevel Optimization}}}
\author[1]{Jeongyeol Kwon}
\author[1]{Dohyun Kwon}
\author[1]{Stephen Wright} 
\author[1]{Robert Nowak}
\affil[1]{Wisconsin Institute for Discovery, UW-Madison}
\begin{document}
\maketitle

\begin{abstract}
    We consider stochastic unconstrained bilevel optimization problems when only the first-order gradient oracles are available. 
    While numerous optimization methods have been proposed for tackling bilevel problems, existing methods either tend to require possibly expensive calculations involving Hessians of lower-level objectives, or lack rigorous finite-time performance guarantees. 
    In this work, we propose a Fully First-order Stochastic Approximation (\algname) method, 
    and study its non-asymptotic convergence properties. Specifically, we show that \algname~converges to an $\epsilon$-stationary solution of the bilevel problem after $\epsilon^{-7/2}, \epsilon^{-5/2}$, or $\epsilon^{-3/2}$ iterations (each iteration using $O(1)$ samples)  when stochastic noises are in both level objectives, only in the upper-level objective, or not present (deterministic settings), respectively. 
    We further show that if we employ momentum-assisted gradient estimators, the iteration complexities can be improved to $\epsilon^{-5/2}, \epsilon^{-4/2}$, or $\epsilon^{-3/2}$, respectively. 
    We demonstrate the superior practical performance of the proposed method over existing second-order based approaches on MNIST data-hypercleaning experiments.
\end{abstract}

\section{Introduction}

Bilevel optimization \cite{colson2007overview} arises in many important applications that have two-level hierarchical structures, including meta-learning \cite{rajeswaran2019meta}, hyper-parameter optimization \cite{franceschi2018bilevel, bao2021stability}, model selection \cite{kunapuli2008bilevel, giovannelli2021bilevel}, adversarial networks \cite{goodfellow2020generative, gidel2018variational}, game theory \cite{stackelberg1952theory} and reinforcement learning \cite{konda1999actor, sutton2018reinforcement}. Bilevel optimization can be generally formulated as the following minimization problem:
\begin{align}
    &\min_{x \in X} \quad F(x) := f(x,y^*(x)) \nonumber \\
    &\text{s.t.} \quad y^*(x) \in \arg \min_{y \in \mathbb{R}^{d_y}} g(x,y), \label{problem:bilevel} \tag{\textbf{P}}
\end{align}
where $f$ and $g$ are continuously differentiable functions and $X \subseteq \mathbb{R}^{d_x}$ is a convex set. 
The outer objective $F$ depends on $x$ both directly and also indirectly via $y^*(x)$, which is a solution of the  lower-level problem of minimizing another function $g$, which is parametrized by $x$. 
Throughout the paper, we assume that $X = \mathbb{R}^{d_x}$ (that is, there are no explicit constraints on $x$) and  that $g(x,y)$ is strongly convex in $y$, so that $y^*(x)$ is uniquely well-defined for all $x \in X$. 


Among various approaches to \eqref{problem:bilevel}, iterative procedures have been predominant due to their simplicity and potential scalability in large-scale applications. Initiated by \cite{ghadimi2018approximation}, a flurry of recent works study efficient iterative procedures and their finite-time performance for solving \eqref{problem:bilevel}, see {\it e.g.,}  \cite{chen2021closing, hong2020two, khanduri2021near, chen2022single, dagreou2022framework, guo2021randomized, sow2022constrained, ji2021bilevel, yang2021provably}. The underlying idea is based on an algorithm of (stochastic) gradient descent type, applied to $F$, that is,
\begin{align*}
    x_{k+1} = x_k - \alpha_k \grad F(x_k),
\end{align*}
with some appropriate step-sizes $\{\alpha_k\}$. 
Direct application of this approach requires us to  compute or estimate the so-called hyper-gradient of $F$ at $x$, which is
\begin{align} \label{eq:gradF}
    \grad F(x) = &\grad_x f (x, y^*(x)) - 
    \grad_{xy}^2 g(x, y^*(x))  
    \grad_{yy}^2 g(x, y^*(x))^{-1} \grad_y f(x, y^*(x)). 
\end{align}
There are two major obstacles in computing \eqref{eq:gradF}. 
The first obstacle is that for every given $x$, we need to search for the optimal solution $y^*(x)$ of the lower problem, which results in updating the lower variable $y$ multiple times before updating $x$. 
To tackle this issue, several ideas have been proposed in \cite{ghadimi2018approximation, hong2020two, chen2021closing} to effectively track $y^*(x)$ without waiting for too many inner iterations before updating $x$ (we discuss this further in Section \ref{section:related_work}).
Following in the spirit of this approach, we show that a single-loop style algorithm can still be implemented using only first-order gradient estimators. 

The second obstacle, which is the main focus of this work, centers around the presence of second-order derivatives of $g$ in \eqref{eq:gradF}.
Existing approaches mostly require an explicit extraction of second-order information from $g$ with a major focus on estimating the Jacobian and inverse Hessian efficiently with stochastic noises \cite{ji2021bilevel, chen2022single, dagreou2022framework}. 
We are particularly interested  in regimes in which such operations are costly and prohibitive \cite{mehra2021penalty, giovannelli2021bilevel}. 
Some existing works avoid the second-order computation and only use the first-order information of both upper and lower objectives; see \cite{giovannelli2021bilevel, sow2022constrained, liu2021towards, ye2022bome}. 
These works either lack a complete finite-time analysis \cite{giovannelli2021bilevel, liu2021towards} or are applicable only to deterministic functions \cite{ye2022bome, sow2022constrained}. 

Our goal in this paper is to study a {\it fully} first-order approach for stochastic bilevel optimization. 
We propose a gradient-based approach that avoids  the estimation of Jacobian and Hessian of $g$, and finds an $\epsilon$-stationary solution of $F$ using only first-order gradients of $f$ and $g$. 
Further,
the number of inner iterations  remains constant throughout all outer iterations of our algorithm. 
We provide a finite-time analysis of our method with explicit convergence rates. 
To our best knowledge, this work is the first to establish non-asymptotic convergence guarantees for stochastic bilevel optimization using only first-order gradient oracles.

\subsection{Overview of Main Results}
The starting point of our approach is to convert \eqref{problem:bilevel} to an equivalent constrained single-level version:
\begin{equation}
    \min_{x \in X, \ y \in \mathbb{R}^{d_y}} \quad f(x,y) \ \quad \text{s.t.} \ \ \quad g(x,y) - g^*(x) \le 0, \label{problem:penalty_bilevel} \tag{\textbf{P'}}
\end{equation}
where $g^*(x) := g(x, y^*(x))$. 
The Lagrangian $\mL_{\lambda}$ for \eqref{problem:penalty_bilevel} with multiplier $\lambda > 0$ is
\begin{align*}
    \mL_\lambda(x,y) := f(x,y) + \lambda (g(x,y) - g^* (x)).
\end{align*}
We can minimize $\mL_{\lambda}$ for a given $\lambda$ by, for example, running (stochastic) gradient descent. 
As noted in \cite{ye2022bome}, the gradient of $\mL_{\lambda}$ can be computed only with gradients of $f$ and $g$, and thus the entire procedure can be implemented using only with first-order derivatives. 
In fact, such a reformulation has been attempted in several recent works (e.g., \cite{liu2021value, sow2022constrained, ye2022bome}). 
However, the challenge in handling the constrained version \eqref{problem:penalty_bilevel} is to find an appropriate value of the multiplier $\lambda$. 
Unfortunately, the desired solution $x^* = \arg\min_x F(x)$ can only be obtained at $\lambda = \infty$
(in fact, the gradient of the constraint in \eqref{problem:penalty_bilevel} is zero with respect to $(x,y)$ at the minimizer, and thus the so-called {\it constraint qualifications} \cite{wright1999numerical} fail to hold). 
However, with $\lambda=\infty$, $\mL_{\lambda}(x,y)$ has unbounded smoothness which prevents us from employing gradient-descent style approaches. 
For these reasons, none of the previously proposed algorithms  can obtain a consistent estimator for the original problem $\min_x F(x)$ without access to second derivatives of $g$.

Nonetheless, we find that \eqref{problem:penalty_bilevel} is the key to deriving a consistent estimator that converges to an $\epsilon$-stationary point of $F$ in finite time {\it without} access to second derivatives. 
The main idea is to start with an initial value $\lambda=\lambda_0>0$ and gradually increase it on subsequent iterations: At iteration $k$,  $\lambda_k = O(k^b)$ for some $b \in (0, 1]$. 
The success of this approach depends crucially on the growth rate captured by the parameter $b$. 
On one hand, fast growth of $\lambda_k$  removes the bias quickly.
On the other hand, fast growth of $\lambda_k$ forces a fast decay of step-sizes due to the growing nonsmoothness  of $\mL_{\lambda_k}$, which slows down the overall convergence.

Our main technical contribution is to characterize an explicit growth rate of $\lambda_k$ that optimizes the trade-off between bias and step-sizes, and to provide a non-asymptotic convergence guarantee with explicit rates for the proposed algorithm. 
\begin{itemize}
    \item We propose a fully first-order method, \algname, for stochastic bilevel optimization. \algname~is a single-loop style algorithm: For every outer variable update we only update inner variables a constant number of times. 
    \item We characterize explicit convergence rates of \algname~in different stochastic regimes. 
    It converges to an $\epsilon$-stationary-point of \eqref{problem:bilevel} after $\tilde{O} (\epsilon^{-3.5})$, $\tilde{O}(\epsilon^{-2.5})$, or $\tilde{O}(\epsilon^{-1.5})$ iterations if both $\grad f$ and $\grad g$ contain stochastic noise, if only access to $\grad f$ is noisy, or if we are in deterministic settings, respectively. 
    These complexities can be improved to $\tilde{O}(\epsilon^{-2.5}), \tilde{O}(\epsilon^{-2})$, or $\tilde{O}(\epsilon^{-1.5})$, respectively, if momentum or variance-reduction techniques are employed. 
    The crux of the analysis is to understand the effect of the value of multipliers $\lambda_k$ on step-sizes, noise variances, and bias. 
    \item We demonstrate the proposed algorithm on a data hyper-cleaning task for MNIST. 
    Even though our theoretical guarantees are not better than existing methods that use second-order information, we illustrate that \algname~can even outperform such methods in practice. 
\end{itemize}

\subsection{Related Work}
\label{section:related_work}

Bilevel optimization has a long and rich history since its first introduction in \cite{bracken1973mathematical}. A number of algorithms have been proposed for bilevel optimization. Classical results include approximation descent \cite{vicente1994descent} and penalty function method \cite{ishizuka1992double, anandalingam1990solution, white1993penalty} for instance; see \cite{colson2007overview} for a comprehensive overview. These results often deal with a several special case of bilevel-optimization and only provide asymptotic convergence. Note that the penalty function methods in \cite{ishizuka1992double, anandalingam1990solution, white1993penalty} are specifically developed for the subclass of their own interest, and cannot be applied to general non-convex objectives $F$.

More recent results on bilevel optimization focuses on non-asymptotic analysis when the lower-level problem is strongly-convex in $y$, since in this case $\grad F(x)$ can be expressed in the closed-form \eqref{eq:gradF} using the implicit function theorem \cite{couellan2015bi, couellan2016convergence}. As mentioned earlier, there are two major challenges in this setting: (i) finding a good approximation of $y^*(x)$, and (ii) the evaluation of Jacobian and Hessian inverse of $g$. The work in \cite{ghadimi2018approximation} establishes the first non-asymptotic analysis of a double-loop algorithm, where in the inner problem we find an approximate solution of $y^*(x)$ given $x$, and use it to evaluate an approximation of $\grad F(x)$. Furthermore, \cite{ghadimi2018approximation} uses the Neuman series approximation to estimate the Hessian inverse when we only have access to the stochastic oracles (of second-order derivatives).

The paper \cite{ghadimi2018approximation} was followed by a flurry of work that improved their result in numerous ways. 
For instance, \cite{hong2020two, chen2021closing, chen2022single, ji2021bilevel} develop a single-loop style update by properly choosing two step-sizes for the inner and outer iterations, along with the improved sample complexity, {\it i.e.,} the total number of accesses to first and second-order stochastic oracles. 
The overall convergence rate is further optimized by using variance-reduction and momentum techniques \cite{khanduri2021near, dagreou2022framework, guo2021randomized, yang2021provably, huang2021biadam}. 
We do not aim to compete with the convergence rates obtained from this line of work, since all of these method have access to second-order derivatives, even though some computational cost might be saved if good automatic differentiation packages \cite{margossian2019review} are available. 
Rather, we avoid the needs for second-order information altogether, allowing a simple algorithm with low per-iteration complexity for large scale applications.

The results most closely related to ours can be found in \cite{ye2022bome, sow2022constrained}. 
\cite{sow2022constrained} considers a primal-dual approach for \eqref{problem:penalty_bilevel}, but their main focus is to get a biased solution when $g$ is only convex (not strongly convex), so the lower-level problem may have multiple solutions. 
Their analysis is restricted to the case in which the overall Lagrangian is strongly-convex in $x$ (which is not usually guaranteed) and they do not provide any guarantees in terms of the true objective $F$. 
More recent work in \cite{ye2022bome} is the closest to ours, but they only consider  deterministic gradient oracles, and do not provide convergence guarantees in terms of $F$. 
Moreover, they  prove a convergence guarantee of $O(k^{-1/4})$, whereas we show an improved guarantee of $\tilde{O}(k^{-2/3})$ in the deterministic case.

We note that there is a line of work that studies a different version of the bilevel problem which has no coupling between two variables $x$ and $y$ ({\it e.g.,} see \cite{ferris1991finite, solodov2007explicit, jiang2022conditional}). In \cite{amini2019iterative, amini2019iterative2}, the Lagrangian formulation is exploited with iteratively increasing multiplier. Note that the nature of single-variable bilevel formulation is different from \eqref{problem:bilevel} as the former is only interesting when the lower-level problem allows a multiple (convex) solution set. To our best knowledge, the idea of iteratively increasing $\lambda_k$ with its non-asymptotic guarantee is new in the context of solving \eqref{problem:bilevel}, and has the merit of avoiding (possibly) expensive second-order computation.

\section{Preliminaries}
We state several assumptions on \eqref{problem:bilevel} to specify the problem class of interest. We assume that optimal value of the outer-level objective is bounded below $F^* := \arg \min_x F(x) > -\infty$. We consider \eqref{problem:bilevel} with the following assumptions on objective functions:
\begin{assumption} 
    \label{assumption:nice_functions}
    The following holds for objective functions $f$ and $g$:
    \begin{enumerate}
        \item[1.] $f$ is continuously differentiable and $l_{f,1}$-smooth jointly in $(x,y)$.
        \item[2.]  $g$ is continuously differentiable and $l_{g,1}$-smooth jointly in $(x,y)$.
        \item[3.] For every $\bar{x} \in X$, $\|\grad_y f(\bar{x}, y)\|$ is bounded by $l_{f,0}$ for all $y$.
    \end{enumerate}
\end{assumption}
Assumption \ref{assumption:nice_functions} assumes the smoothness of both objective functions and boundedness of $\grad_y f$. This has been a standard assumption in bilevel optimization \cite{ghadimi2018approximation}. In this work, we focus on well-conditioned bilevel optimization problems, {\it i.e.,} when $F(x)$ is well-defined, continuous and smooth. The following assumption has been the standard sufficient condition for well-conditioned bilevel problems \cite{ghadimi2018approximation}:
\begin{assumption}
    \label{assumption:extra_nice_g}
    The following holds for the lower-level objective $g$:
    \begin{enumerate}
        \item[1.] For every $\bar{x} \in X$, $g(\bar{x}, y)$ is $\mu_g$ strongly-convex in $y$ for some $\mu_g > 0$.
        \item[2.] $g$ is two-times continuously differentiable, and $\grad^2 g$ is $l_{g,2}$-Lipschitz jointly in $(x,y)$.
    \end{enumerate}
\end{assumption}
We assume that we can access first-order information of objective functions only through stochastic gradient oracles:
\begin{assumption}
    \label{assumption:gradient_variance}
    We access the gradients of objective functions via unbiased estimators $\nabla f(x,y;\zeta), \nabla g(x,y;\phi)$ where:
    \begin{align*}
        & \Exs[\nabla f(x,y;\zeta)] = \nabla f(x, y), \\
        & \Exs[\nabla g(x,y;\phi)] = \nabla g(x, y),
    \end{align*}
    and the variances of stochastic gradient estimators are bounded:
    \begin{align*}
        & \Exs[\|\nabla f(x,y; \zeta) - \nabla f(x, y)\|^2] \le \sigma_f^2, \\
        & \Exs[\|\nabla g(x,y; \phi) - \nabla g(x, y)\|^2] \le \sigma_g^2.
    \end{align*} 
\end{assumption}
Throughout the paper, we assume that Assumptions \ref{assumption:nice_functions}-\ref{assumption:gradient_variance} hold unless specified otherwise. We use the following definition as the optimality criteria for solving \eqref{problem:bilevel}:
\begin{definition}[$\epsilon$-stationary point]
    A point $x$ is called $\epsilon$-stationary if $\|\grad F(x)\|^2 \le \epsilon$. A stochastic algorithm is said to achieve an $\epsilon$-stationary point in $K$ iterations if $\Exs[\|\grad F(x_K)\|^2]\le \epsilon$ where the expectation is over the stochasticity of the algorithm.
\end{definition}

\paragraph{Notation.} We use $O_{\texttt{P}} (\cdot)$ when we state the order of constants which depends on instance-dependent parameters ({\it e.g.,} Lipschitz, strong-convexity, smoothness parameters). We say $a_k \asymp b_k$ if $a_k$ and $b_k$ decreases (or increases) in the same rate as $k \rightarrow \infty$, {\it i.e.,} $\lim_{k\rightarrow \infty} a_k/b_k = \Theta(1)$. Throughout the paper, $\|\cdot\|$ denotes the Euclidean norm on finite dimensional space.

\section{Algorithm}
\label{section:algorithm}

In this section, we develop an algorithm that converges to a stationary point of the bilevel problem ({\it i.e.,} a stationary point of $F(x) = f(x,y^*(x))$) and makes use only of gradients of $f$ and $g$. 
Recall the  equivalent formulation \eqref{problem:penalty_bilevel}. 
To see how we can avoid second-order derivatives, we observe the gradient of $\grad \mL_{\lambda}$:
\begin{align*}
    \grad_x \mL_{\lambda}(x,y) &= \grad_x f(x,y) + \lambda (\grad_x g(x,y) - \grad g^*(x)), \\
    \grad_y \mL_{\lambda}(x,y) &= \grad_y f(x,y) + \lambda \grad_y g(x,y).
\end{align*}
Note that
\begin{align*}
    \grad g^*(x) &= \grad_x g(x,y^*(x)) + \grad_x y^*(x) \grad_y g(x, y^*(x)) = \grad_x g(x,y^*(x)),
\end{align*}
due to the optimality condition for $g$ at $y^*(x)$. 
Thus, we could consider optimizing $\mL_{\lambda} (x,y)$ by introducing an auxiliary variable $z$ that chases $y^*(x)$, and setting up an alternative bilevel formulation \eqref{problem:bilevel} with outer-level objective $\mL_{\lambda}(x', z)$, outer variable $x' = (x,y)$, and inner variable $z$. 
However, such an approach settles in a different landscape from that of  $F(x)$, resulting in a bias. 
The question is how tightly we can control this bias without compromising too much smoothness of the alternative function $\mL_{\lambda}$, which affects the overall step-size design and noise variance. 


To control the bias, we need a better understanding of how the functions $\mL_{\lambda}$ and $F(x)$ are related. 
Let us introduce an auxiliary function $\mL_{\lambda}^*$ defined as:
\begin{align*}
    \mL_{\lambda}^*(x) := \min_y \mL_\lambda (x,y).
\end{align*}
Note that if $\lambda > 2l_{f,1} / \mu_g$, then for every $\bar{x} \in X$, $\mL_{\lambda}(\bar{x},y)$ is at least $(\lambda \mu_g / 2)$ strongly-convex in $y$, and therefore its minimizer $y_{\lambda}^*(x)$ is uniquely well-defined:
\begin{align}
\label{eq:yls}
    y_{\lambda}^*(x) := \arg \min_y \mL_{\lambda} (x,y). 
\end{align}
Since $F(x) = \lim_{\lambda \rightarrow \infty} \mL_{\lambda}^*(x)$ for every $x \in X$, we could expect that $\mL_{\lambda}^*(x)$ is a well-defined proxy of $F(x)$ for sufficiently large $\lambda > 0$.
The following lemma confirms this intuition.
\begin{lemma}
    \label{lemma:l_star_lambda_approximate}
    For any $x \in X$ and $\lambda \ge 2l_{f,1} / \mu_g$, $\grad \mL_{\lambda}^*(x)$ is given by
    \begin{align*}
        \grad_x \mL_{\lambda} (x, y_{\lambda}^*(x)) &= \grad_x f(x,y_{\lambda}^*(x)) + \lambda (\grad_x g(x, y_{\lambda}^*(x)) - \grad_x g(x, y^*(x))).
    \end{align*}
    Furthermore,  we have
    \begin{align*}
        \| \grad F(x) - \grad \mL_\lambda^*(x) \| \le  C_\lambda / \lambda,
    \end{align*}
    where $C_\lambda := \frac{4 l_{f,0} l_{g,1}}{\mu_g^2} \left( l_{f,1} + \frac{2 l_{f,0} l_{g,2}}{\mu_g} \right)$. 
\end{lemma}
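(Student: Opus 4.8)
The plan is to establish the two claims in sequence: first the exact gradient formula for $\mL_\lambda^*$, then the $O(1/\lambda)$ approximation bound. For the gradient formula I would invoke the envelope (Danskin) theorem. Since $\lambda \ge 2l_{f,1}/\mu_g$, the map $y \mapsto \mL_\lambda(x,y)$ is $(\lambda\mu_g/2)$-strongly convex, so $y_\lambda^*(x)$ is unique and smooth in $x$, and the chain rule gives $\grad \mL_\lambda^*(x) = \grad_x \mL_\lambda(x, y_\lambda^*(x)) + \grad_x y_\lambda^*(x)\, \grad_y \mL_\lambda(x, y_\lambda^*(x))$. The second term vanishes because $\grad_y \mL_\lambda(x, y_\lambda^*(x)) = 0$ by first-order optimality. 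Substituting $\grad_x \mL_\lambda(x,y) = \grad_x f(x,y) + \lambda(\grad_x g(x,y) - \grad g^*(x))$ together with the identity $\grad g^*(x) = \grad_x g(x, y^*(x))$ and evaluating at $y = y_\lambda^*(x)$ gives the claimed formula.

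The core of the second claim is a displacement estimate for $y_\lambda^*(x)$. The optimality condition $\grad_y f(x, y_\lambda^*(x)) + \lambda \grad_y g(x, y_\lambda^*(x)) = 0$ rearranges to $\grad_y g(x, y_\lambda^*(x)) = -\tfrac{1}{\lambda}\grad_y f(x, y_\lambda^*(x))$. Since $g(x,\cdot)$ is $\mu_g$-strongly convex and $\grad_y g(x, y^*(x)) = 0$, strong monotonicity yields $\|y_\lambda^*(x) - y^*(x)\| \le \tfrac{1}{\mu_g}\|\grad_y g(x, y_\lambda^*(x))\| = \tfrac{1}{\lambda\mu_g}\|\grad_y f(x, y_\lambda^*(x))\| \le \tfrac{l_{f,0}}{\lambda\mu_g}$, the last step using boundedness of $\grad_y f$. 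I will abbreviate $\delta_\lambda := \|y_\lambda^*(x) - y^*(x)\| \le l_{f,0}/(\lambda\mu_g)$, and write $y_\lambda := y_\lambda^*(x)$, $y^* := y^*(x)$.

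Comparing the formula from part one against the closed form \eqref{eq:gradF}, I would decompose $\grad \mL_\lambda^*(x) - \grad F(x) = T_1 + T_2$ with $T_1 = \grad_x f(x, y_\lambda) - \grad_x f(x, y^*)$ and $T_2 = \lambda(\grad_x g(x, y_\lambda) - \grad_x g(x, y^*)) + \grad_{xy}^2 g(x, y^*)[\grad_{yy}^2 g(x, y^*)]^{-1}\grad_y f(x, y^*)$. The first term is immediate: $l_{f,1}$-smoothness of $f$ gives $\|T_1\| \le l_{f,1}\delta_\lambda$. For $T_2$ I would use the integral mean-value forms $\grad_x g(x, y_\lambda) - \grad_x g(x, y^*) = \bar{H}_{xy}(y_\lambda - y^*)$ and $\grad_y g(x, y_\lambda) = \bar{H}_{yy}(y_\lambda - y^*)$, where $\bar{H}_{xy}, \bar{H}_{yy}$ are the Hessian blocks averaged along the segment $[y^*, y_\lambda]$ and $\bar{H}_{yy} \succeq \mu_g I$ is invertible by strong convexity. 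Combining with the optimality condition turns the first part of $T_2$ into $-\bar{H}_{xy}\bar{H}_{yy}^{-1}\grad_y f(x, y_\lambda)$, so that $T_2 = \grad_{xy}^2 g(x,y^*)[\grad_{yy}^2 g(x,y^*)]^{-1}\grad_y f(x, y^*) - \bar{H}_{xy}\bar{H}_{yy}^{-1}\grad_y f(x, y_\lambda)$.

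The main obstacle is bounding this difference of two matrix-product-times-vector expressions, which I would split into the perturbation in $\grad_y f$ (controlled by $l_{f,1}\delta_\lambda$ together with $\|\bar{H}_{xy}\| \le l_{g,1}$ and $\|\bar{H}_{yy}^{-1}\| \le 1/\mu_g$) plus the perturbation in the matrix product applied to $\grad_y f(x,y^*)$. The latter is the delicate piece: from the $l_{g,2}$-Lipschitz Hessian I get $\|\bar{H}_{xy} - \grad_{xy}^2 g(x,y^*)\| \le \tfrac{l_{g,2}}{2}\delta_\lambda$ and $\|\bar{H}_{yy} - \grad_{yy}^2 g(x,y^*)\| \le \tfrac{l_{g,2}}{2}\delta_\lambda$, and the resolvent identity $A^{-1} - B^{-1} = A^{-1}(B-A)B^{-1}$ gives $\|\bar{H}_{yy}^{-1} - [\grad_{yy}^2 g(x,y^*)]^{-1}\| \le \tfrac{l_{g,2}\delta_\lambda}{2\mu_g^2}$, so each factor contributes a further $O(\delta_\lambda) = O(1/\lambda)$. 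Collecting all pieces and using $l_{g,1} \ge \mu_g$ bounds the total by $C_\lambda/\lambda$; the stated constant $C_\lambda = \tfrac{4 l_{f,0} l_{g,1}}{\mu_g^2}\left(l_{f,1} + \tfrac{2 l_{f,0} l_{g,2}}{\mu_g}\right)$ is a convenient loose upper bound that dominates the sum of every term.
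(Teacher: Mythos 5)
Your proof is correct and, where it matters, arrives at an even slightly sharper constant than $C_\lambda$, but the second half takes a genuinely different route from the paper. The first part coincides with the paper's argument (chain rule plus the optimality condition $\grad_y \mL_\lambda(x, y_\lambda^*(x)) = 0$). For the approximation bound, the paper does not specialize immediately: it first proves the general comparison estimate of Lemma \ref{lemma:relation_Lagrangian_F}, valid for an \emph{arbitrary} $y$, by rearranging \eqref{eq:gradF} with second-order Taylor remainders anchored at $y^*(x)$, keeping the correction term $\grad_{xy}^2 g\,[\grad_{yy}^2 g]^{-1}\grad_y \mL_\lambda(x,y)$ explicit and capping the remainder by $\min(2l_{g,1},\, l_{g,2}\|y-y^*\|)$ so the bound remains useful far from $y^*$; Lemma \ref{lemma:l_star_lambda_approximate} then follows by setting $y = y_\lambda^*(x)$ (which kills the correction term) and invoking the displacement bound $\|y_\lambda^*(x) - y^*(x)\| \le 2l_{f,0}/(\lambda\mu_g)$ from Lemma \ref{lemma:y_star_lagrangian_continuity}, itself derived from the $(\lambda\mu_g/2)$-strong convexity of $\mL_\lambda$ in $y$. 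You instead specialize to $y = y_\lambda^*(x)$ from the outset and execute the perturbation with segment-averaged Hessian blocks $\bar H_{xy}, \bar H_{yy} \succeq \mu_g I$ and the resolvent identity, and you obtain the displacement bound directly from $\mu_g$-strong monotonicity of $\grad_y g$, which is a factor of $2$ tighter ($l_{f,0}/(\lambda\mu_g)$ versus $2l_{f,0}/(\lambda\mu_g)$); collecting your terms gives roughly $\frac{2 l_{f,0} l_{g,1}}{\mu_g^2}\bigl(l_{f,1} + \frac{l_{f,0} l_{g,2}}{2\mu_g}\bigr)\lambda^{-1}$, comfortably below $C_\lambda/\lambda$ as you claim. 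What each approach buys: yours is more self-contained and direct for this one statement, while the paper's detour produces a reusable estimate at non-optimal $y$ (with the $\min$ refinement when $\|y - y^*\|$ is large), the natural form if one wants to control the bias at iterates $y_k \neq y_\lambda^*(x_k)$. One caveat applying equally to both proofs: the chain-rule step presupposes differentiability of $y_\lambda^*(\cdot)$, which the standing assumptions (with $f$ only $C^1$) do not quite guarantee without Assumption \ref{assumption:extra_smooth_f}; your invocation of Danskin's theorem is in fact the cleaner fix, since uniqueness of the minimizer already yields differentiability of $\mL_\lambda^*$ without differentiating the argmin map, so you could drop the chain-rule phrasing entirely.
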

Importantly, $\grad \mL_{\lambda}^*(x)$ can be computed only with first-order derivatives of both $f$ and $g$. Thus any first-order method that finds a stationary point of $\mL_{\lambda}^*(x)$ approximately follows the trajectory of $x$ updated with the exact $\grad F(x)$, with a bias of $O(1/\lambda)$.

\begin{algorithm}[t]
    \caption{\algname}
    \label{algo:algo_name}
    
    {{\bf Input:} step sizes: $\{\alpha_k, \gamma_k\}$, multiplier difference sequence: $\{\delta_k\}$, inner-loop iteration count: $T$, step-size ratio: $\xi$, initializations: $\lambda_0, x_0, y_0, z_0$}
    \begin{algorithmic}[1]
        \FOR{$k = 0 ... K-1$}
            \STATE{$z_{k,0} \leftarrow z_{k}$, $y_{k,0} \leftarrow y_{k}$}
            \FOR{$t = 0 ... T-1$}
                \STATE{$z_{k,t+1} \leftarrow z_{k,t} - \gamma_k h_{gz}^{k,t}$}
                \STATE{$y_{k,t+1} \leftarrow y_{k,t} - \alpha_k (h_{fy}^{k,t} + \lambda_k h_{gy}^{k,t})$}
            \ENDFOR
            \STATE{$z_{k+1} \leftarrow z_{k,T}$, $y_{k+1} \leftarrow y_{k,T}$}
            \STATE{$x_{k+1} \leftarrow x_k - \xi \alpha_k (h_{fx}^k + \lambda_k (h_{gxy}^k - h_{gxz}^k))$}
            \STATE{$\lambda_{k+1} \leftarrow \lambda_k + \delta_k$}
        \ENDFOR
    \end{algorithmic}
\end{algorithm}

Our strategy is to use $\grad \mL_{\lambda}^*(x)$ as a proxy to $\grad F(x)$ for generating a sequence of iterates $\{x_k\}$. 
Accordingly, we introduce sequences $\{y_k\}$ and $\{z_k\}$ that approximate $y_{\lambda_k}^*(x_k)$ and $y^*(x_k)$, respectively. 
We gradually increase $\lambda_k$ with $k$, so that the bias in the sequence $\{x_k\}$ converges to 0.

Our \textbf{F}ully \textbf{F}irst-order \textbf{S}tochastic \textbf{A}pproximation (\algname) method is shown in Algorithm \ref{algo:algo_name}. 
We emphasize that the method works with \emph{stochastic} gradients that are independent unbiased estimators of gradients, {\it i.e.,} 
\begin{align*}
    &h_{gz}^{k,t} := \nabla_y g(x_k, z_{k,t}; \phi_{z}^{k,t}), \ h_{fy}^{k,t} := \nabla_y f(x_k, y_{k,t}; \zeta_{y}^{k,t}), \\
    &h_{gy}^{k,t} := \nabla_y g(x_k, y_{k,t}; \phi_{y}^{k,t}), h_{gxy}^k:= \nabla_{x} g(x_k, y_{k+1}; \phi_{xy}^k), \\
    & h_{fx}^k := \nabla_x f(x_k, y_{k+1}; \zeta_{x}^k), h_{gxz}^k := \nabla_x g(x_k, z_{k+1}; \phi_{xz}^k).
\end{align*}
The algorithm can set $T=1$ in conjunction with an appropriate choice of $\xi$, allowing a fully single-loop update for all variables.

\subsection{Step-Size Design Principle} 
\label{sec:step}

We describe how we design the step-sizes for Algorithm \ref{algo:algo_name} to  achieve convergence to a $\epsilon$-stationary point of $F$. 
Several conditions must be satisfied. 
As will be shown in the analysis, with $(\lambda_k \mu_g/2)$-strong convexity of $\mL_{\lambda_k}$ in $y$, one-step inner iteration of $y_{k,t}$ is a contraction mapping toward $y_{\lambda,k}^*$ with rate $1 - O(\mu_g \beta_k)$. Henceforth, we often use the notatino $\beta_k := \alpha_k\lambda_k$, which is the effective step-size for updating $y_k$. 
For simplicity, we denote $y_{\lambda,k}^* := y_{\lambda_k}^* (x_k)$ and $y_k^* := y^*(x_k)$.  

We now describe the specific rules. First, essentially the step size we use for $x_k$ is $\xi \alpha_k$, and thus, $\alpha_k = \Omega(1/k)$ (otherwise, $\sum_k \alpha_k < \infty$, and $x_k$ fails to converge). 
On the other hand, since $\beta_k=\alpha_k\lambda_k$ is the effective step size for updating $y_k$, we need $\beta_k < O(1/l_{g,1}) = O(1)$. 
Together, these observations imply that the maximum rate of growth for $\lambda_k$ cannot exceed $O(k)$. 

Second, note that $\|x_{k+1} - x_k\|$ is (roughly) proportional to
\begin{align*}
    \|\grad F(x_k)\| + C_\lambda/\lambda_k + \lambda_k \|y_k - y_{\lambda, k}^*\| + \lambda_k \|z_k - y_{k}^*\|.
\end{align*}
This rate is optimized when $\|y_{k} - y_{\lambda,k}^*\| \asymp \|z_k - y_k^*\| \asymp \lambda_k^{-2}$. Thus, the ideal growth rate for $\lambda_k$ is $\|y_{k} - y_{\lambda,k}^*\|^{1/2}$ or $\|z_k - y_k^*\|^{1/2}$. 
We will design the rate of convergence of $y_k$ and $z_k$ to be the same, {\it i.e.,} $\beta_k \asymp \gamma_k$. 
For instance, when we have stochastic noises in the gradient estimate of $g$, {\it i.e.,} $\sigma_g^2 > 0$, the expected convergence rate of $\|y_k - y_{\lambda_k}^*\|^2$ is $O(\beta_k)$, since the sequence is optimized for strongly convex functions. 
This suggests $\lambda_k \asymp \beta_k^{-1/4}$ as the ideal rate of growth for $\lambda_k$.

\begin{figure}[t]
    \centering
    \includegraphics[width=0.4\textwidth]{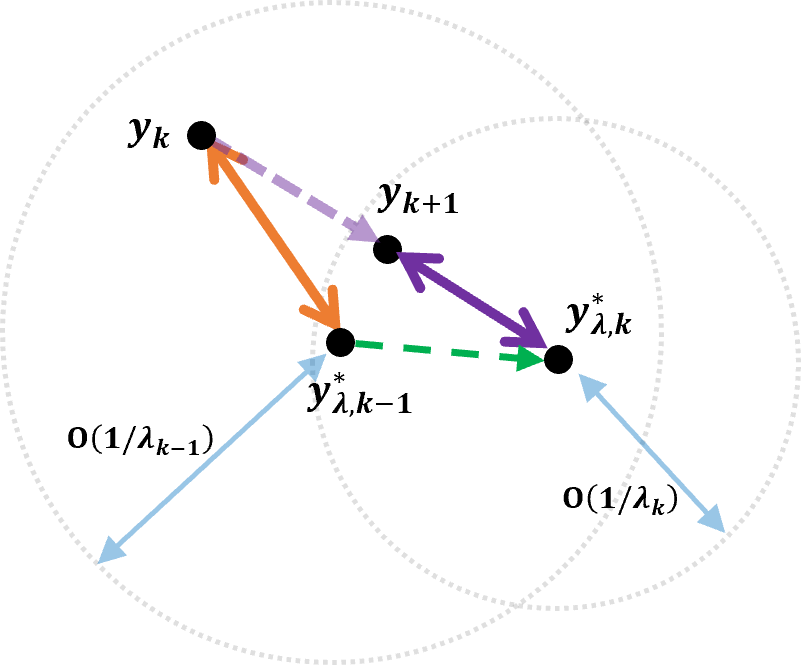}
    \caption{$y_k$ should move faster than $y_{\lambda_k}^*(x_k)$ moves, and stay within $O(1/\lambda_k)$-ball around $y_{\lambda_k}^*(x_k)$.}
    \label{fig:contraction_explain}
\end{figure}

The crux of Algorithm \ref{algo:algo_name} is how well $y_k$ (and $z_k$) can chase $y_{\lambda_k}^*(x_k)$ (resp. $y^*(x_k)$) when $x_k$ and $\lambda_k$ are changing  at every iteration. 
We characterize first how fast $y_\lambda^*(x)$ moves in relation to the movements of $\lambda$ and $x$.
\begin{lemma}
    \label{lemma:y_star_lagrangian_continuity}
    For any $x_1, x_2 \in X$ and for any $\lambda_2 \ge \lambda_1 \ge 2 l_{f,1} / \mu_g$, we have
    \begin{align*}
        \| y_{\lambda_1}^*(x_1) - y_{\lambda_2}^*(x_2)\| \le \frac{ 2(\lambda_2 - \lambda_1)}{\lambda_1\lambda_2} \frac{l_{f,0}}{\mu_g} + l_{\lambda,0} \|x_2 - x_1\|,
    \end{align*}
    for some $l_{\lambda,0} \le 3l_{g,1} / \mu_g$.
\end{lemma}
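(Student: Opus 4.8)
The plan is to bound the displacement by a triangle inequality that cleanly separates the effect of increasing the multiplier from the effect of moving the outer variable. Specifically, I would write
\[
\| y_{\lambda_1}^*(x_1) - y_{\lambda_2}^*(x_2)\| \le \| y_{\lambda_1}^*(x_1) - y_{\lambda_2}^*(x_1)\| + \| y_{\lambda_2}^*(x_1) - y_{\lambda_2}^*(x_2)\|,
\]
so that the first term isolates a pure change of $\lambda$ at the fixed point $x_1$ and the second term isolates a pure change of $x$ at the fixed multiplier $\lambda_2$. Throughout I will use that, for $\lambda \ge 2 l_{f,1}/\mu_g$, the Hessian $\grad_{yy}^2 \mL_\lambda = \grad_{yy}^2 f + \lambda \grad_{yy}^2 g \succeq (\lambda \mu_g - l_{f,1}) I \succeq (\lambda \mu_g /2) I$, i.e. $\mL_\lambda(x,\cdot)$ is $(\lambda \mu_g/2)$-strongly convex, and that each minimizer is characterized by the stationarity condition $\grad_y f(x, y_\lambda^*(x)) + \lambda \grad_y g(x, y_\lambda^*(x)) = 0$ (the $g^*(x)$ term drops out since it does not depend on $y$).

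For the first term I would invoke the standard strong-convexity estimate $\| y - y_{\lambda_2}^*(x_1)\| \le \frac{2}{\lambda_2 \mu_g}\|\grad_y \mL_{\lambda_2}(x_1, y)\|$ evaluated at $y = y_{\lambda_1}^*(x_1)$. The key algebraic step is that the stationarity condition for $y_{\lambda_1}^*(x_1)$ lets me eliminate $\grad_y f$, collapsing the cross-$\lambda$ gradient to $\grad_y \mL_{\lambda_2}(x_1, y_{\lambda_1}^*(x_1)) = (\lambda_2 - \lambda_1)\grad_y g(x_1, y_{\lambda_1}^*(x_1))$. Applying the same stationarity condition a second time gives $\|\grad_y g(x_1, y_{\lambda_1}^*(x_1))\| = \frac{1}{\lambda_1}\|\grad_y f(x_1, y_{\lambda_1}^*(x_1))\| \le l_{f,0}/\lambda_1$ by the boundedness of $\grad_y f$ in Assumption~\ref{assumption:nice_functions}. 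Chaining these produces exactly the first summand $\frac{2(\lambda_2 - \lambda_1)}{\lambda_1 \lambda_2}\frac{l_{f,0}}{\mu_g}$.

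For the second term I would use the usual sensitivity bound for the minimizer of a strongly convex function: comparing the stationarity conditions at $x_1$ and $x_2$ (both at multiplier $\lambda_2$), then combining strong monotonicity of $\grad_y \mL_{\lambda_2}(x_2,\cdot)$ with the joint Lipschitzness of $\grad_y f$ and $\grad_y g$ in $x$ yields $\| y_{\lambda_2}^*(x_1) - y_{\lambda_2}^*(x_2)\| \le \frac{l_{f,1} + \lambda_2 l_{g,1}}{\lambda_2 \mu_g/2}\|x_1 - x_2\|$. Setting $l_{\lambda,0} := \frac{2(l_{f,1} + \lambda_2 l_{g,1})}{\lambda_2 \mu_g}$, I then verify the claimed constant: since $\lambda_2 \ge 2 l_{f,1}/\mu_g$ we have $2 l_{f,1}/(\lambda_2 \mu_g) \le 1$, and since any $\mu_g$-strongly convex, $l_{g,1}$-smooth $g$ satisfies $\mu_g \le l_{g,1}$, the leftover $1$ is absorbed to give $l_{\lambda,0} \le 1 + 2 l_{g,1}/\mu_g \le 3 l_{g,1}/\mu_g$.

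The routine parts are the two textbook sensitivity estimates; the one genuinely delicate point, and the step I would be most careful with, is the double use of the stationarity condition in the first term---first to cancel the $\grad_y f$ contribution so the cross-$\lambda$ gradient reduces to a clean multiple of $\grad_y g$, and then to trade that residual $\grad_y g$ back for the bounded quantity $\grad_y f$---since this is precisely what sharpens a naive $O((\lambda_2 - \lambda_1)/\lambda_2)$ estimate into the $O((\lambda_2 - \lambda_1)/(\lambda_1 \lambda_2))$ rate that the step-size design in Section~\ref{sec:step} depends on. I would also double-check that the constant $l_{\lambda,0}$ is valid uniformly over all admissible $\lambda_2 \ge 2 l_{f,1}/\mu_g$, which is what allows the bound to be stated with a single instance-independent $l_{\lambda,0}$.
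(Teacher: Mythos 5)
Your proof is correct and essentially the same as the paper's: both rest on the double use of the stationarity condition (first to collapse $\grad_y \mL_{\lambda_2}$ at $y_{\lambda_1}^*(x_1)$ to $(\lambda_2-\lambda_1)\grad_y g$, then to get $\|\grad_y g(x_1, y_{\lambda_1}^*(x_1))\| \le l_{f,0}/\lambda_1$), on the $(\lambda\mu_g/2)$-strong-convexity coercivity, and on joint Lipschitzness of $\grad_y f, \grad_y g$ in $x$, arriving at identical constants including the absorption $2l_{f,1}/(\lambda_2\mu_g) \le 1$ and $\mu_g \le l_{g,1}$. The only cosmetic difference is that you split through the intermediate minimizer $y_{\lambda_2}^*(x_1)$ and invoke coercivity twice, whereas the paper decomposes the gradient $\grad_y f(x_2, y_{\lambda_1}^*(x_1)) + \lambda_2 \grad_y g(x_2, y_{\lambda_1}^*(x_1))$ directly and invokes coercivity once at $(x_2,\lambda_2)$.
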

For Algorithm \ref{algo:algo_name} to converge to a desired point, $y_k$ should move sufficiently fast toward the current target $y_{\lambda,k}^*$ every iteration, dominating the movement of target $y_{\lambda, k}^*$ that results from updates to $x_k$ and $\lambda_k$ (see Figure~\ref{fig:contraction_explain}). 
At a minimum, the following condition should hold (in expectation):
\begin{align*}
    \|y_{k+1} - y_{\lambda, k}^*\| < \| y_k - y_{\lambda, k-1}^* \|.
\end{align*}

Since $\|y_{k+1} - y_{\lambda, k}^*\|^2$ can be bounded with $T$-steps of $1 - O(\mu_g \beta_k)$ contractions, starting from $y_k$, we require 
\begin{align*}
    \left(1 - O(T \mu_g \beta_k) \right) \|y_k - y_{\lambda, k}^*\|^2 < \|y_k - y_{\lambda, k-1}^*\|^2.
\end{align*}
Now, applying the bound in Lemma \ref{lemma:y_star_lagrangian_continuity}, the minimal condition is given by:
\begin{align*}
    \|y_{\lambda, k-1}^* - y_{\lambda,k}^*\| &\le (l_{f,0} / \mu_g) \cdot (\delta_k / \lambda_k^2) + l_{\lambda,0} \|x_{k} - x_{k-1}\| \le  T \mu_g \beta_k \| y_{k} - y_{\lambda, k-1}^* \|.
\end{align*}
Note that $\|y_{k+1} - y_{\lambda, k}^*\|$ should decay faster than $\lambda_k^{-1}$. Otherwise, the bias in updating $x_k$ using $y_k$ (to estimate $\grad \mL_{\lambda_k}^*$) is larger than $\lambda_k \|y_{k+1} - y_{\lambda,k}^*\|$, and this amount might blow up. Also, it can be easily seen that $\|x_{k} - x_{k-1}\| = \Omega(\xi \beta_k \|y_{k} - y_{\lambda,k-1}^*\|)$. 
We can thus derive two simple conditions:
\begin{align*}
    \frac{\delta_k}{\lambda_k} \le O_{\texttt{P}} (1) \cdot \beta_k, \ \frac{\xi}{T} < O_{\texttt{P}} (1),
\end{align*}
where $O_{\texttt{P}}(1)$ are instance-dependent constants. If $\lambda_k$ grows in some polynomial rate, then $\delta_k / \lambda_k = O(1/k)$ and the first condition is satisfied provided that $\beta_k = \Omega(1/k)$. 
The second condition indicates the number of inner iterations $T$ required for each outer iteration. 
We can set $T=1$ (thus making the algorithm single-loop) by setting $\xi$ sufficiently small.
Alternatively, we can set $\xi = 1$ choose $T>1$ to depend on some instance-specific parameters.

\subsection{Extension: Integrating Momentum}
\begin{algorithm}[t]
    \caption{\algnametwo}
    \label{algo:algo_name2}
    
    {{\bf Input:} step sizes: $\{\alpha_k, \gamma_k\}$, multiplier difference sequence: $\{\delta_k\}$, momentum-weight sequence $\{\eta_k\}$, step-size ratio: $\xi$, initialization: $\lambda_0, x_0, y_0, z_0$}
    \begin{algorithmic}[1]
        \FOR{$k = 0 ... K-1$}
            \STATE{$z_{k+1} \leftarrow z_{k} - \gamma_k \tilde{h}_{gz}^{k}$}
            \STATE{$y_{k+1} \leftarrow y_{k} - \alpha_k (\tilde{h}_{fy}^{k} + \lambda_k \tilde{h}_{gy}^{k})$}
            \STATE{$x_{k+1} \leftarrow x_k - \xi \alpha_k (\tilde{h}_{fx}^k + \lambda_k (\tilde{h}_{gxy}^k - \tilde{h}_{gxz}^k))$}
            \STATE{$\lambda_{k+1} \leftarrow \lambda_k + \delta_k$}
        \ENDFOR
    \end{algorithmic}
\end{algorithm}

Given the simple structure of Algorithm \ref{algo:algo_name}, we can integrate variance-reduction techniques to improve the overall convergence rates. 
One relevant technique is the momentum-assisting technique of \cite{khanduri2021near} for stochastic bilevel optimization. 
To simplify the presentation, we consider a fully single-loop variant by setting $T=1$. 

To apply the momentum technique, we only need to replace the simple unbiased gradient estimators $h$ with momentum-assisted gradient estimators $\tilde{h}$. 
For instance, $\tilde{h}_z^k$ can be defined with a proper momentum weight sequence $\eta_k \in (0,1]$ as follows:
\begin{align*}
    \tilde{h}_{z}^{k} := &\nabla_y g(x_k, z_{k}; \phi_{z}^{k}) + (1 - \eta_{k}) \left( \tilde{h}_{z}^{k-1} - \grad_y g(x_{k-1}, z_{k-1}; \phi_{z}^{k}) \right).
\end{align*}
Other quantities $\tilde{h}_{fy}$, $\tilde{h}_{gy}$, $\tilde{h}_{fx}$, $\tilde{h}_{gxy}$, $\tilde{h}_{gxz}$ are defined similarly, with the same momentum-weight sequence. 
We defer the full description of those quantities to Appendix~\ref{appendix:momentum_method}. 
The version of our algorithm that incorporates momentum is called \textbf{F}aster \textbf{F}ully \textbf{F}irst-order \textbf{S}tochastic \textbf{A}pproximation (\algnametwo); it is described in Algorithm~\ref{algo:algo_name2}, where we simply replace $h$ with $\tilde{h}$. 
Note that we have additional moment-weight parameters $\{\eta_{k}\}$.

\section{Main Results}
\label{section:analysis}

In this section, we provide non-asymptotic convergence guarantees of the proposed algorithms. For Algorithm~\ref{algo:algo_name}, we prove in  Theorem~\ref{theorem:general_nonconvex} that the weighted sum of $\|\grad F(x_k)\|^2$ in expectation is bounded from above. Choosing suitable step sizes, the estimate guarantees the convergence rate, which depends on the presence of stochastic noises in Corollaries \ref{corollary:non_convex_F}. The similar results with better convergence rates and weaker assumptions hold true for Algorithm~\ref{algo:algo_name2}, as shown in Theorem~\ref{theorem:general_momentum}
and Corollary \ref{corollary:non_convex_momentum}.








\subsection{Main Result for Algorithm \ref{algo:algo_name}}

In this section we provide non-asymptotic convergence guarantees of the proposed algorithms. 
For Algorithm~\ref{algo:algo_name}, we prove in  Theorem~\ref{theorem:general_nonconvex} that the weighted sum of $\|\grad F(x_k)\|^2$ in expectation is bounded from above. 
By choosing suitable step sizes, the estimate yields a convergence rate.
Dependence on stochastic noises is explicated in Corollaries~\ref{corollary:non_convex_F}. 
Similar results with better convergence rates and weaker assumptions are proved for Algorithm~\ref{algo:algo_name2}; see Theorem~\ref{theorem:general_momentum}
and Corollary~\ref{corollary:non_convex_momentum}.








\subsection{Main Result for Algorithm \ref{algo:algo_name}}

Two mild assumptions are required for exploiting the smoothness of $y_{\lambda}^*(x)$.
\begin{assumption}
    \label{assumption:bounded_grad_x}
    THe gradient with respect to $x$ is bounded for functions $f$ and $g$:
    \begin{enumerate}
        \item[1.] For every $\bar{y}$, $\|\grad_x f(x, \bar{y})\| \le l_{f,0}$ for all $x \in X$.
        \item[2.] For every $\bar{y}$, $\|\grad_x g(x, \bar{y})\| \le l_{g,0}$ for all $x \in X$.
    \end{enumerate}
\end{assumption}
\begin{assumption}
    \label{assumption:extra_smooth_f}
    $f$ is two-times continuously differentiable, and $\grad^2 f$ is $l_{f,2}$-Lipschitz in $(x,y)$.
\end{assumption}
The smoothness of $y^*_\lambda(x)$ is used to keep the number of effective inner iterations constant throughout all outer-iterations, as in \cite{chen2021closing}. 


Before we state our convergence result, let us define some additional notation. We denote the second-moment bound of the $x$ update, $x_{k+1} - x_k$, as $M := \max(l_{f,0}^2 + \sigma_f^2, l_{g,0}^2 + \sigma_g^2)$. We also denote $l_{*,0} = \max(1, l_{\lambda_0,0})$ 
and $l_{*,1} = l_{\lambda_0,1}$ where $\lambda_0$ is the starting value of Lagrange multiplier. 

We are now ready to state our main results for Algorithm \ref{algo:algo_name}.

\begin{theorem}
    \label{theorem:general_nonconvex}
    Suppose that Assumptions \ref{assumption:nice_functions} - \ref{assumption:extra_smooth_f} hold, and parameters and step-sizes are chosen such that $\lambda_0 \ge 2l_{f,1} / \mu_g$ and
    \begin{subequations}
    \label{eq:step_size_theorem}
    \begin{align}
        & \beta_k \le \gamma_k \le \min \left(\frac{1}{4 l_{g,1}}, \frac{1}{4 T \mu_g} \right), \ \alpha_k \le \min \left( \frac{1}{8 l_{f,1}}, \frac{1}{2 \xi l_{F,1}} \right), \label{eq:step_size_theorema} \\
        & \frac{\xi}{T} < c_\xi \mu_g \cdot \max \left( l_{g,1} l_{*,0}^2, \ l_{*,1} \sqrt{M} \right)^{-1}, \quad \frac{\delta_k}{\lambda_k} \le \frac{T \mu_g \beta_k}{16} \label{eq:step_size_theoremb}
    \end{align}
    \end{subequations}
    for all $k \ge 0$ with a proper absolute constant $c_\xi > 0$. Then for any $K \ge 1$, Algorithm \ref{algo:algo_name} iterates satisfy
    \begin{align*}
        \sum_{k=0}^{K-1} \xi \alpha_k \Exs[\|\grad F(x_k)\|^2] &\le O_{\texttt{P}} (1) \cdot \sum_{k} \xi\alpha_k\lambda_k^{-2} + O_{\texttt{P}}(\sigma_f^2) \cdot \sum_{k} \alpha_k^2 \lambda_k + O_{\texttt{P}}(\sigma_g^2) \cdot \sum_{k} \gamma_k^2 \lambda_k + O_{\texttt{P}} (1), 
    \end{align*}
    where $O_{\texttt{P}}(1)$ are instance-dependent constants.
\end{theorem}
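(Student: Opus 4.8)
The plan is to treat $\mL_{\lambda_k}^*(x_k)$ as a surrogate potential for $F(x_k)$, exploiting Lemma~\ref{lemma:l_star_lambda_approximate}, and to run a descent argument on this surrogate while simultaneously tracking how well the inner sequences $y_k,z_k$ approximate the moving targets $y_{\lambda,k}^*$ and $y_k^*$. First I would establish that $\mL_\lambda^*$ is uniformly smooth in $x$ with a constant $l_{F,1}$ that stays bounded as $\lambda$ grows (this uses Assumptions~\ref{assumption:extra_nice_g} and~\ref{assumption:extra_smooth_f}, i.e.\ Lipschitzness of $\grad^2 g$ and $\grad^2 f$, together with the closed form of $\grad\mL_\lambda^*$ from Lemma~\ref{lemma:l_star_lambda_approximate}). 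Applying the descent lemma to $\mL_{\lambda_k}^*$ along $x_{k+1}-x_k = -\xi\alpha_k\hat g_k$ and taking conditional expectations yields
\begin{align*}
\Exs[\mL_{\lambda_k}^*(x_{k+1})] \le \mL_{\lambda_k}^*(x_k) - \tfrac{\xi\alpha_k}{2}\|\grad\mL_{\lambda_k}^*(x_k)\|^2 + \tfrac{\xi\alpha_k}{2}\Exs\|e_k\|^2 + \tfrac{l_{F,1}\xi^2\alpha_k^2}{2}\Exs\|\hat g_k\|^2,
\end{align*}
where $e_k$ is the deterministic error from evaluating the gradient at $(y_{k+1},z_{k+1})$ instead of $(y_{\lambda,k}^*,y_k^*)$. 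By Lipschitzness of $\grad_x f$ and $\grad_x g$ this satisfies $\|e_k\| \le O_{\texttt{P}}(1)\cdot\lambda_k(\|y_{k+1}-y_{\lambda,k}^*\| + \|z_{k+1}-y_k^*\|)$, so the whole analysis reduces to controlling the two tracking errors, each amplified by the dangerous factor $\lambda_k^2$.

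Next I would set up contraction recursions for the tracking errors. Since $\mL_{\lambda_k}(x_k,\cdot)$ is $(\lambda_k\mu_g/2)$-strongly convex and $g(x_k,\cdot)$ is $\mu_g$-strongly convex, the $T$ inner steps contract $\|y_{k+1}-y_{\lambda,k}^*\|^2$ and $\|z_{k+1}-y_k^*\|^2$ by factors $1-O(T\mu_g\beta_k)$ and $1-O(T\mu_g\gamma_k)$, at the cost of noise floors of order $\alpha_k^2\sigma_f^2/(\mu_g\beta_k)+\beta_k\sigma_g^2/\mu_g$ and $\gamma_k\sigma_g^2/\mu_g$ respectively. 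The targets themselves drift: $\|y_{\lambda,k-1}^*-y_{\lambda,k}^*\|$ is bounded through Lemma~\ref{lemma:y_star_lagrangian_continuity} by a $\lambda$-drift term $O(\delta_k/\lambda_k^2)$ plus an $x$-drift term $O(\|x_k-x_{k-1}\|)$, while $\|y_{k-1}^*-y_k^*\| \le O_{\texttt{P}}(1)\cdot\|x_k-x_{k-1}\|$ by Lipschitzness of $y^*$. I would also account for the change of the potential caused by incrementing the multiplier: by the envelope identity $\tfrac{d}{d\lambda}\mL_\lambda^*(x)=g(x,y_\lambda^*(x))-g^*(x)=O(1/\lambda^2)$ (using $\|y_\lambda^*-y^*\| = O(1/\lambda)$ and $l_{g,1}$-smoothness), so $\mL_{\lambda_{k+1}}^*(x_{k+1})-\mL_{\lambda_k}^*(x_{k+1}) = O(\delta_k/\lambda_k^2)$, which sums to $O_{\texttt{P}}(1)$ for any polynomial growth of $\lambda_k$.

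Finally I would combine these pieces into a single Lyapunov function $V_k = \mL_{\lambda_k}^*(x_k)-F^* + a_k\|y_k-y_{\lambda,k-1}^*\|^2 + b_k\|z_k-y_{k-1}^*\|^2$, with weights $a_k,b_k\asymp\lambda_k$ chosen so that the $-O(T\mu_g\beta_k)$ and $-O(T\mu_g\gamma_k)$ contractions dominate the $\xi\alpha_k\lambda_k^2$ amplification of the tracking errors in the descent inequality; this is precisely where the conditions $\xi/T < c_\xi\mu_g(\cdots)^{-1}$ and $\delta_k/\lambda_k \le T\mu_g\beta_k/16$ are consumed, ensuring the target-drift and $x$-drift terms are absorbed into the contraction slack. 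Telescoping $\sum_k(\Exs V_{k+1}-\Exs V_k)$ leaves $-\sum_k\tfrac{\xi\alpha_k}{2}\|\grad\mL_{\lambda_k}^*(x_k)\|^2$ on the left, the noise floors propagated through $\lambda_k^2$ giving the $\sigma_f^2\sum\alpha_k^2\lambda_k$ and $\sigma_g^2\sum\gamma_k^2\lambda_k$ terms (invoking $\beta_k\le\gamma_k$), and the initial potential plus the multiplier-increment sum giving the $O_{\texttt{P}}(1)$ term. Converting $\|\grad\mL_{\lambda_k}^*(x_k)\|^2 \ge \tfrac12\|\grad F(x_k)\|^2 - C_\lambda^2/\lambda_k^2$ via Lemma~\ref{lemma:l_star_lambda_approximate} produces the remaining $\sum_k\xi\alpha_k\lambda_k^{-2}$ term and closes the bound. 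The main obstacle is the coupled three-sequence dynamics with simultaneously moving targets and a growing $\lambda_k$: the tracking errors must be weighted by exactly the right power of $\lambda_k$ so that one set of step-size constraints simultaneously keeps the $\lambda_k^2$-amplified errors payable by the inner contraction, keeps the target drift from the $x$- and $\lambda$-updates strictly below the per-iteration contraction, and keeps the surrogate bias and noise floors summable; balancing all three inside a single Lyapunov function is the delicate part.
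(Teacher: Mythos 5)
Your overall architecture matches the paper's: a single Lyapunov function coupling the objective value with $\lambda_k$-weighted tracking errors, contraction recursions for $y_k$ and $z_k$, target-drift control via Lemma~\ref{lemma:y_star_lagrangian_continuity}, and the final conversion between $\grad\mL_{\lambda_k}^*$ and $\grad F$ via Lemma~\ref{lemma:l_star_lambda_approximate}. Your one genuine deviation --- anchoring the potential at $\mL_{\lambda_k}^*(x_k)$ instead of $F(x_k)$ and handling the multiplier increment by the envelope identity $\tfrac{d}{d\lambda}\mL_\lambda^*(x) = g(x,y_\lambda^*(x)) - g^*(x) \le \tfrac{l_{g,1}}{2}\|y_\lambda^*(x)-y^*(x)\|^2 = O(\lambda^{-2})$ --- is legitimate and even tidy (the increment sum telescopes to $O(1/\lambda_0)$ unconditionally, cleaner than the paper's appeal to $\lambda_k = \poly(k)$), but it requires a lemma the paper does not contain: uniform-in-$\lambda$ smoothness of $\mL_\lambda^*$, which you would have to prove via $\|\grad y_\lambda^*(x) - \grad y^*(x)\| = O(1/\lambda)$ using Assumptions~\ref{assumption:extra_nice_g} and~\ref{assumption:extra_smooth_f}. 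The paper instead descends on $F$ directly using $l_{F,1}$ and pays the bias $C_\lambda/\lambda_k$ inside each step; both routes yield the same $\sum_k \xi\alpha_k\lambda_k^{-2}$ term.

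The genuine gap is in your treatment of the $x$-induced target drift. You bound $\|y_{\lambda,k}^* - y_{\lambda,k-1}^*\|$ by $O(\delta_k/\lambda_k^2) + O_{\texttt{P}}(1)\cdot\|x_k - x_{k-1}\|$ (Lipschitzness only) and claim the drift is ``absorbed into the contraction slack.'' Quantitatively this fails in the stochastic case: the cross term $\Exs[\langle y_{k+1}-y_{\lambda,k}^*,\, y_{\lambda,k+1}^*-y_{\lambda,k}^*\rangle]$ must be split by Young's inequality with weight $\asymp T\mu_g\beta_k$ (the only weight the contraction can absorb), and since $\Exs\|x_{k+1}-x_k\|^2$ contains the noise $\xi^2(\alpha_k^2\sigma_f^2+\beta_k^2\sigma_g^2)$, the resulting contribution after the $\lambda_k$-weighting is of order
\begin{align*}
    \frac{\xi^2 l_{*,0}^2}{T\mu_g}\left(\alpha_k\sigma_f^2 + \alpha_k\lambda_k^2\sigma_g^2\right)
\end{align*}
per iteration. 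The $\sigma_f^2$ piece sums to $\Theta\bigl(\sigma_f^2\sum_k\alpha_k\bigr)$, the same order as the left-hand weight $\sum_k\xi\alpha_k$, so your argument can only produce a non-vanishing floor $\Exs\|\grad F(x_R)\|^2 = \Omega_{\texttt{P}}(\xi\sigma_f^2/(T\mu_g))$, not the stated $O_{\texttt{P}}(\sigma_f^2)\sum_k\alpha_k^2\lambda_k$ (e.g., under the design of Corollary~\ref{corollary:non_convex_F}(a) the allowed term is $O(\log K)$ while yours grows like $K^{2/7}$). The missing idea is the paper's Lemmas~\ref{lemma:y_star_smoothness_bound} and~\ref{lemma:y_star_lambda_vdot_bound}: condition on $v_k$, linearize $y^*(x_{k+1})-y^*(x_k) = \grad y^*(x_k)(x_{k+1}-x_k) + \text{remainder}$, so that the conditional expectation of the linear part sees only the mean update $-\xi\alpha_k q_k^x$ (the gradient noise has zero conditional mean), while the quadratic remainder is controlled by the \emph{smoothness} constant $l_{*,1}$ of $y^*$ and $y_\lambda^*$ (Lemma~\ref{lemma:nice_y_star_lagrangian}) together with the second-moment bound $M$ from Assumption~\ref{assumption:bounded_grad_x}. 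This is precisely why $l_{*,1}\sqrt{M}$ appears in \eqref{eq:step_size_theoremb} and why Assumptions~\ref{assumption:bounded_grad_x} and~\ref{assumption:extra_smooth_f} are hypotheses of the theorem: your sketch cites that condition as ``consumed'' but never uses $l_{*,1}$ or $M$, invoking Assumption~\ref{assumption:extra_smooth_f} only for the smoothness of $\mL_\lambda^*$ --- a place where the paper's proof does not need it at all.
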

The proof of Theorem~\ref{theorem:general_nonconvex} is given in Appendix~\ref{appendix:main_proof_non_convex}. At a high level, our analysis investigates the decrease in expectation (with $k$) of the potential function $\mathbb{V}_k$ defined by
\begin{align}
\label{eq:pot0}
    \mathbb{V}_k := &(F(x_k) - F^*) + l_{g,1} \lambda_k \|y_k - y_{\lambda_k}^* (x_k)\|^2  + \frac{\lambda_k l_{g,1}}{2} \|z_k - y^*(x_k)\|^2,
\end{align}
where $F^*$ is the minimum value of $F$ and $y^*_{\lambda}$ and $y^*$ are given in \eqref{eq:yls} and \eqref{problem:bilevel}, respectively. That is, in addition to the decrease in values of $F$ and $z_k - y_k^*$ which have been standardized in literature, we track the error between $y_k$ and $y_{\lambda_k}^* (x_k)$ since $y_{\lambda, k}^*$ is the key to compute true $\grad F(x_k)$ only with gradients. It is also shown in the proof that the right scaling factor for the tracking errors is $O_{\texttt{P}}(\lambda_k)$. 

We now describe how we design step sizes. Note that the conditions \eqref{eq:step_size_theorema} are standard conditions on the step sizes for gradient-based methods with smooth functions. The conditions \eqref{eq:step_size_theoremb} arise from the double-loop nature of the problem, as discussed in Section \ref{sec:step}. In accordance with the step-size design rule \eqref{eq:step_size_theorem}, we propose the following: 
\begin{align}
    &T = \max \left(32, (c_{\xi} \mu_g)^{-1} \max \left(l_{g,1} l_{*,0}^2, \ \sqrt{M} l_{*,1} \right) \right), \nonumber \\
    &\xi = 1, \alpha_k = \frac{c_\alpha}{(k+k_0)^a}, \ \gamma_k = \frac{c_\gamma}{(k+k_0)^c}, \label{eq:step_size_design}
\end{align}
and for the multiplier increase sequence $\{\delta_k\}$,
\begin{align}
    \delta_k = \min\left( \frac{T \mu_g }{16} \alpha_k \lambda_k^2, \ \frac{\gamma_k}{2\alpha_k} - \lambda_k \right), \label{eq:step_size_design_delta}
\end{align}
with some rate constants $a,c \in [0,1]$ and $a \ge c$. We design the starting value $\lambda_0$ of the Lagrange multiplier 
and the constants as
\begin{align}
    &k_0 \ge \frac{4}{\mu_g} \max \left( \frac{ \xi l_{F,1}}{2}, T l_{g,1}, l_{f,1} \right), \ \lambda_0 \ge \frac{2l_{f,1}}{\mu_g} , \nonumber \\
    &c_\gamma = \frac{1}{\mu_g k_0^{1-c}}, \ c_\alpha = \frac{1}{2\lambda_0 \mu_g k_0^{1-a}}. \label{eq:step_size_constant_design}
\end{align}
These choices simplify the convergence rate analysis, but any set of choices can be used as long as it satisfies \eqref{eq:step_size_theorem}. 
With the choices above, we can specify the rate of convergence in three different regimes of stochastic noises.
\begin{corollary}
    \label{corollary:non_convex_F}
    Suppose that the conditions of Theorem~\ref{theorem:general_nonconvex} hold,  with step-sizes designed as in \eqref{eq:step_size_design}, \eqref{eq:step_size_design_delta}, and
    \eqref{eq:step_size_constant_design}. Let $R$ be a random variable drawn from a uniform distribution over $\{0, ..., K-1\}$. 
    Then the following convergence results hold after $K$ iterations of Algorithm \ref{algo:algo_name}.
    \begin{enumerate}
        \item[(a)]  If stochastic noises are present in both upper-level objective $f$ and lower-level objective $g$ ({\it i.e.,} $\sigma_f^2, \sigma_g^2 > 0$), then by setting $a=5/7$ and $c=4/7$ in \eqref{eq:step_size_design} and \eqref{eq:step_size_constant_design}, we obtain 
        $\Exs[\|\grad F(x_R)\|^2] \asymp \frac{\log K}{K^{2/7}}$,
        \item[(b)] If stochastic noises are present only in $f$ ({\it i.e.,} $\sigma_f^2 > 0)$, $\sigma_g^2 = 0$), then by setting $a = 3/5$ and $c=2/5$ in \eqref{eq:step_size_design} and \eqref{eq:step_size_constant_design}, we obtain $\Exs[\|\grad F(x_R)\|^2] \asymp \frac{\log K}{K^{2/5}}$,
        \item[(c)] If we have access to exact information about $f$ and $g$ ({\it i.e.,} $\sigma_f^2 = \sigma_g^2 = 0$), then by setting $a = 1/3$ and $c = 0$ in \eqref{eq:step_size_design} and \eqref{eq:step_size_constant_design}, we obtain $\|\grad F(x_K)\|^2 \asymp \frac{\log K}{K^{2/3}},$
    \end{enumerate}
\end{corollary}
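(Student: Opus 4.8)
The plan is to treat this as a direct specialization of the master bound in Theorem~\ref{theorem:general_nonconvex}: once the exact polynomial rates of $\alpha_k$, $\gamma_k$, and $\lambda_k$ are known, each of the four sums on its right-hand side becomes a $p$-series that I can evaluate in closed form, and a final division by $K\alpha_{K-1}$ converts the weighted sum $\sum_k \alpha_k\Exs[\|\grad F(x_k)\|^2]$ into the uniform average $\Exs[\|\grad F(x_R)\|^2]$. So the whole corollary rests on (i) identifying the induced growth rate of $\lambda_k$, (ii) confirming the design meets all hypotheses of the theorem, and (iii) routine $p$-series bookkeeping.

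First I would pin down the growth rate of the multiplier. The key observation is that, with the prescribed $\delta_k$ in \eqref{eq:step_size_design_delta}, the second branch of the $\min$ is the binding one for all large $k$: since $\frac{\gamma_k}{2\alpha_k}-\lambda_k \asymp k^{a-c-1}$ while $\frac{T\mu_g}{16}\alpha_k\lambda_k^2 \asymp k^{a-2c}$ and $a-2c \ge a-c-1$ whenever $c\le 1$, we get $\delta_k = \frac{\gamma_k}{2\alpha_k}-\lambda_k$, i.e. the telescoping identity $\lambda_{k+1}=\frac{\gamma_k}{2\alpha_k}$. With the constants in \eqref{eq:step_size_constant_design} this gives exactly $\lambda_{k+1}=\lambda_0\big(\tfrac{k+k_0}{k_0}\big)^{a-c}$, hence $\lambda_k \asymp k^{a-c}$ and $\beta_k=\alpha_k\lambda_k \asymp \tfrac12\gamma_k \le \gamma_k$, consistent with the heuristic $\lambda_k \asymp \beta_k^{-1/4}$ of Section~\ref{sec:step}. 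I would also record that defining $\delta_k$ through the first branch of the $\min$ makes $\delta_k/\lambda_k \le \frac{T\mu_g}{16}\beta_k$ automatic, that $\lambda_k \ge \lambda_0 \ge 2l_{f,1}/\mu_g$ is non-decreasing (so Lemma~\ref{lemma:y_star_lagrangian_continuity} applies), and that the remaining inequalities in \eqref{eq:step_size_theorem} follow from the bounds on $k_0, c_\alpha, c_\gamma$ and the choice of $T$; this verifies every hypothesis of Theorem~\ref{theorem:general_nonconvex}.

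Next I would substitute $\alpha_k \asymp k^{-a}$, $\gamma_k \asymp k^{-c}$, $\lambda_k \asymp k^{a-c}$ into the four terms, obtaining $\sum_k k^{-(3a-2c)}$ for the bias term, $\sum_k k^{-(a+c)}$ for the $\sigma_f$ term, $\sum_k k^{-(3c-a)}$ for the $\sigma_g$ term, and the constant. Using $\sum_{k=1}^{K} k^{-p}=\Theta(1),\,\Theta(\log K),\,\Theta(K^{1-p})$ according as $p>1,\,=1,\,<1$, I would check that the prescribed exponents push every active sum to the borderline $p=1$ (which is exactly what minimizes $a$, hence maximizes the normalizing rate, realizing the bias/step-size trade-off): in (a) both $3a-2c$ and $3c-a$ equal $1$ while $a+c=9/7>1$; in (b) both $3a-2c$ and $a+c$ equal $1$ and the $\sigma_g$ term is absent; in (c) $3a-2c=1$ and both noise terms vanish. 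Each active term thus contributes $O(\log K)$ (the $\sigma_f$ term in (a) even converges), so the entire right-hand side is $O(\log K)$.

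Finally, because $\alpha_k$ is decreasing with $\alpha_{K-1}\asymp K^{-a}$, we have $\frac1K\sum_{k=0}^{K-1}\Exs[\|\grad F(x_k)\|^2] \le \frac{1}{K\alpha_{K-1}}\sum_{k=0}^{K-1}\alpha_k\Exs[\|\grad F(x_k)\|^2]$, so dividing the $O(\log K)$ bound by $K\alpha_{K-1}\asymp K^{1-a}$ yields $\Exs[\|\grad F(x_R)\|^2]=O(\log K/K^{1-a})$ for the uniformly drawn $R$; substituting $1-a=2/7,2/5,2/3$ gives the three stated rates, with (c) reducing to a deterministic bound on the returned iterate. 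The main obstacle is the first step: correctly resolving the coupled $\lambda_k$-recursion — in particular arguing that the second branch of the $\min$ governs the dynamics so that $\lambda_k\asymp k^{a-c}$ — and checking that the constants keep every inequality of \eqref{eq:step_size_theorem} intact. Once the rate of $\lambda_k$ is fixed, the remaining computation is exactly the $p$-series bookkeeping that makes the deliberate exponent choices collapse all dominant terms to $\log K$.
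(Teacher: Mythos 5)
Your proposal matches the paper's own proof essentially step for step: resolving the $\min$ in \eqref{eq:step_size_design_delta} to show $\lambda_k = \gamma_k/(2\alpha_k) \asymp k^{a-c}$ (the paper uses the same comparison $a-2c \ge a-c-1$, i.e.\ $c\le 1$ with $T\ge 32$), substituting into Theorem~\ref{theorem:general_nonconvex} to get the sums $\sum_k (k+k_0)^{-(3a-2c)}$, $\sum_k (k+k_0)^{-(a+c)}$, $\sum_k (k+k_0)^{-(3c-a)}$, observing the chosen exponents put the active sums at the borderline $p=1$, and normalizing by $K^{1-a}$ to bound $\Exs[\|\grad F(x_R)\|^2]$. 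Your $p$-series bookkeeping and the three case analyses coincide with the paper's, so the argument is correct and not a different route.
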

As these results show, stronger convergence results can be proved when noise is present in fewer places in the problem. 
If stochastic noise is present only in the upper-level rather than in both levels, the rate can be improved from $O(k^{-2/7})$ to $O(k^{-2/5})$. In deterministic settings (no noise), we get a rate of  $O(k^{-2/3})$. 
This rate compares to the $O(k^{-1})$ rate that can be obtained with second-order based methods.

\subsection{Main Result for Algorithm \ref{algo:algo_name2}}
When we use the momentum-assisting technique, we require the stochastic functions to be well-behaved as well.
\begin{assumption}
    \label{assumption:nice_stochastic_fg}
    Assumption \ref{assumption:nice_functions} holds for $f(x,y;\zeta)$ and $g(x,y;\phi)$ with probability $1$. 
\end{assumption}
One technical benefit of the momentum technique is that now we no longer require the bounded-gradient assumption w.r.t. $x$ (Assumption~\ref{assumption:bounded_grad_x}) or the smoothness of Hessian of $f$ (Assumption~\ref{assumption:extra_smooth_f}) for the analysis, as we no longer make use of the smoothness of $y_\lambda^*$.
We show the following convergence result for Algorithm \ref{algo:algo_name2}.
\begin{theorem}
    \label{theorem:general_momentum}
    Suppose Assumptions \ref{assumption:nice_functions}-\ref{assumption:gradient_variance} and \ref{assumption:nice_stochastic_fg} hold. If step-size parameters are chosen such that $\lambda_0 \ge 2l_{f,1} / \mu_g$ and 
    \begin{subequations}
        \label{eq:step_size_theorem_momentum}
        \begin{align}
        & \beta_k \le \gamma_k \le \frac{1}{16 l_{g,1}}, \ \xi \alpha_k \le \frac{1}{l_{F,1}},  \ \xi \le c_\xi \frac{\mu_g}{l_{g,1} l_{*,0}^2}, \ \frac{\delta_k}{\lambda_k} \le \frac{\mu_g \beta_k}{8} ,\label{eq:step_size_theorem_momentum_a} \\
            & \eta_0 = \eta_1 = 1, \ \max \left( 2\frac{\gamma_{k-1} - \gamma_k}{\gamma_{k-1}}, c_\eta \frac{l_{g,1}^3}{\mu_g} \gamma_k^2 \right) \le \eta_{k+1} \le 1, \ \delta_k / \gamma_k = o(1), \label{eq:step_size_theorem_momentum_b}
        \end{align}
    \end{subequations}
    with proper absolute constants $c_{\xi}, c_{\eta} > 0$, then for any $K \ge 1$, Algorithm \ref{algo:algo_name2} satisfy
    \begin{align*}
        \sum_{k=0}^{K-1} \xi \alpha_k \Exs[\|\grad F(x_k)\|^2] &\le O_{\texttt{P}} (1) \cdot \sum_{k} \xi\alpha_k\lambda_k^{-2} + O_{\texttt{P}}(\sigma_f^2) \cdot \sum_{k} \frac{\eta_{k+1}^2}{\gamma_k \lambda_k} + O_{\texttt{P}}(\sigma_g^2) \cdot \sum_{k} \frac{\eta_{k+1}^2 \lambda_k}{\gamma_k} + O_{\texttt{P}} (1), 
    \end{align*}
    where $O_{\texttt{P}}(1)$ are instance-dependent constants. 
\end{theorem}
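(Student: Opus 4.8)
The plan is to establish a descent inequality for a carefully augmented potential function, track the cumulative error across iterations, and isolate the three noise-dependent terms that appear in the final bound. The potential function $\mathbb{V}_k$ in \eqref{eq:pot0} worked for Theorem~\ref{theorem:general_nonconvex}, but since the momentum version no longer exploits smoothness of $y_\lambda^*$, I would augment it with \emph{momentum-error terms} measuring how far each estimator $\tilde{h}$ lags behind the true gradient it approximates. Concretely, I would define quantities such as $\tilde{e}_{gz}^k := \tilde{h}_{gz}^k - \grad_y g(x_k, z_k)$ (and analogously for the other five momentum estimators), and then study an enlarged potential of the form $\mathbb{V}_k + \sum (\text{weights}) \cdot \Exs[\|\tilde{e}^k\|^2]$, where the weights scale with $\lambda_k$ and $1/\gamma_k$ in the manner dictated by the right-hand side of the claimed bound.

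First I would record the standard STORM/momentum recursion: for each estimator, $\Exs[\|\tilde{e}^{k}\|^2] \le (1-\eta_k)^2 \Exs[\|\tilde{e}^{k-1}\|^2] + 2\eta_k^2 \sigma^2 + (\text{smoothness const}) \cdot (1-\eta_k)^2 \Exs[\|(x_k,y_k,z_k) - (x_{k-1}, y_{k-1}, z_{k-1})\|^2]$, using Assumption~\ref{assumption:nice_stochastic_fg} to bound the per-sample gradient difference via $l_{f,1}$- and $l_{g,1}$-smoothness. The displacement term is where the step sizes $\alpha_k, \gamma_k, \xi$ and the multiplier $\lambda_k$ enter, since $\|x_{k+1}-x_k\| = O(\xi\alpha_k\lambda_k)\cdot(\cdots)$ and similarly for $y,z$. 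Next I would reuse the contraction analysis from Section~\ref{sec:step}: with $(\lambda_k \mu_g/2)$-strong convexity, the $y_k$ and $z_k$ updates contract toward $y_{\lambda,k}^*$ and $y_k^*$ at rate $1 - O(\mu_g \beta_k)$ and $1-O(\mu_g\gamma_k)$, leaving residual drift controlled by $\delta_k/\lambda_k^2$ (from Lemma~\ref{lemma:y_star_lagrangian_continuity}) and by $\|x_k - x_{k-1}\|$. The conditions \eqref{eq:step_size_theorem_momentum} are exactly what is needed to make the contraction dominate: $\beta_k \le \gamma_k \le 1/(16 l_{g,1})$ ensures stable inner steps, $\delta_k/\lambda_k \le \mu_g\beta_k/8$ guarantees the target moves slower than the iterates chase it, and the lower bound $\eta_{k+1} \ge c_\eta (l_{g,1}^3/\mu_g)\gamma_k^2$ forces the momentum error to decay fast enough relative to the tracking error.

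Then I would combine everything into a single telescoping inequality. Multiplying the bias bound from Lemma~\ref{lemma:l_star_lambda_approximate} (giving the $O_{\texttt{P}}(1)\sum_k \xi\alpha_k\lambda_k^{-2}$ term) with the descent of $F$ along $\grad \mL_{\lambda_k}^*$, and absorbing the tracking-error and momentum-error recursions via the chosen weights, the displacement-induced cross terms should cancel against the strong-convexity contraction, provided $\xi \le c_\xi \mu_g/(l_{g,1} l_{*,0}^2)$ keeps the $x$-movement subordinate. Summing over $k$ and telescoping $\mathbb{V}_k$ leaves the initial-condition constant $O_{\texttt{P}}(1)$, the bias term, and the two residual noise sums; the careful accounting of how $\eta_{k+1}^2 \sigma^2$ propagates through the $1/\gamma_k$-weighted momentum error yields precisely $\sum_k \eta_{k+1}^2/(\gamma_k\lambda_k)$ for $f$-noise and $\sum_k \eta_{k+1}^2\lambda_k/\gamma_k$ for $g$-noise (the extra $\lambda_k$ reflecting that $g$-gradients are scaled by $\lambda_k$ in the updates).

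\textbf{The main obstacle} I expect is the coupled bookkeeping of six interdependent momentum errors together with the two tracking errors, all under a \emph{moving} strong-convexity modulus $\lambda_k\mu_g$ and a \emph{growing} multiplier. The delicate point is choosing the potential weights so that every displacement cross term (which carries a factor $\lambda_k^2$ from the $x$-update and from the $\lambda_k$-scaled inner updates) is reabsorbed by a contraction or by a noise term of the stated order, without the weights themselves destroying the telescoping. Getting the $\lambda_k$-powers to balance — so that the $g$-noise term comes out as $\lambda_k/\gamma_k$ rather than a worse power — is the crux, and it is exactly why the hypotheses tie $\delta_k/\gamma_k = o(1)$ and the $\eta_k$ lower bound to $\gamma_k^2$; verifying that these suffice will require the most care.
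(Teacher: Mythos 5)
Your plan coincides with the paper's own proof: the paper augments the potential $\mathbb{V}_k$ with exactly the momentum-error terms you propose, weighted as $\frac{1}{c_\eta l_{g,1}^2 \gamma_{k-1}}\bigl(\lambda_k^{-1}\|\tilde{e}_k^x\|^2 + \lambda_k^{-1}\|\tilde{e}_k^y\|^2 + \lambda_k\|\tilde{e}_k^z\|^2\bigr)$, establishes the same STORM-style recursions and strong-convexity contraction lemmas, absorbs the displacement cross terms into the negative $\|q_k^x\|^2,\|q_k^y\|^2,\|q_k^z\|^2$ terms, and telescopes under \eqref{eq:step_size_theorem_momentum} with each condition playing precisely the role you assign it. The only cosmetic difference is that you track six separate momentum errors where the paper combines them into three (e.g., $\tilde{e}_k^y := \tilde{h}_{fy}^k + \lambda_k\tilde{h}_{gy}^k - q_k^y$), which is exactly where the $\delta_k$-dependent noise terms and the $\delta_k/\gamma_k = o(1)$ condition you flagged enter the recursions.
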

The proof of Theorem \ref{theorem:general_momentum} appears in Appendix \ref{appendix:momentum_method}. 
We introduce the following  step-size design, consistent with \eqref{eq:step_size_theorem_momentum}.
\begin{subequations}
    \label{eq:step_size_momentum_detail}
    \begin{align}
        &\alpha_k = \frac{c_\alpha}{(k+k_0)^a}, \ \gamma_k = \frac{c_\gamma}{(k+k_0)^c}, \ \eta_{k} = (k+1)^{-2c} \\
        &\xi \le c_\xi \frac{\mu_g}{l_{g,1} l_{*,0}^2}, \ \delta_k = \frac{\gamma_k}{\alpha_k} - \lambda_k, \ \lambda_0 \ge \frac{2l_{f,1}}{\mu_g}, \\
        &k_0 \ge \frac{128}{\mu_g} \max\left(\xi l_{F,1}, l_{g,1} \sqrt{\frac{c_\eta l_{g,1}}{\mu_g}} \right), c_\gamma = \frac{8}{\mu_g k_0^{1-c}}, \ c_\alpha = \frac{8}{\mu_g \lambda_0 k_0^{1-a}},
    \end{align}
\end{subequations}
with some rate constants $a,c \in [0,1]$ and $a \ge c$. As a corollary, we can obtain faster convergence rates for Algorithm~\ref{algo:algo_name2} than Algorithm~\ref{algo:algo_name}. 
\begin{corollary}
    \label{corollary:non_convex_momentum}
   Suppose the conditions of Theorem~\ref{theorem:general_momentum} hold.
   Suppose that Algorithm~\ref{algo:algo_name2} is run with step-sizes are designed as in \eqref{eq:step_size_momentum_detail}. Let $R$ be a random variable drawn from a uniform distribution over $\{0, ..., K-1\}$.
   Then the following convergence results hold after $K$ iterations of Algorithm \ref{algo:algo_name2}.
    \begin{enumerate}
        \item[(a)]  If stochastic noises are present in both upper-level objective $f$ and lower-level objective $g$ ({\it i.e.,} $\sigma_f^2, \sigma_g^2 > 0$), then by setting $a=3/5$ and $c=2/5$ in~\eqref{eq:step_size_momentum_detail}, we obtain 
        $\Exs[\|\grad F(x_R) \|^2] \asymp \frac{\log K}{K^{2/5}}$.
        \item[(b)] If stochastic noises are present only in $f$ ({\it i.e.,} $\sigma_f^2 > 0)$, $\sigma_g^2 = 0$), then by setting $a = 1/2$ and $c=1/4$ in~\eqref{eq:step_size_momentum_detail}, we obtain $\Exs[\|\grad F(x_R)\|^2] \asymp \frac{\log K}{K^{1/2}}$.
        \item[(c)] If we have access to exact information about $f$ and $g$ ({\it i.e.,} $\sigma_f^2 = \sigma_g^2 = 0$), then by setting $a = 1/3$ and $c = 0$ in~\eqref{eq:step_size_momentum_detail}, we obtain $\|\grad F(x_K)\|^2 \asymp \frac{\log K}{K^{2/3}}$.
    \end{enumerate}
\end{corollary}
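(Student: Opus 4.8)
The plan is to read Corollary~\ref{corollary:non_convex_momentum} as a direct specialization of the master inequality in Theorem~\ref{theorem:general_momentum}. All that is needed is to (i) convert the polynomial schedule~\eqref{eq:step_size_momentum_detail} into explicit growth rates for $\lambda_k,\beta_k,\gamma_k,\eta_k$, (ii) check that these rates meet every requirement in~\eqref{eq:step_size_theorem_momentum} so the theorem applies, and (iii) evaluate the four sums on the right-hand side of the theorem and divide by the accumulated weight $\sum_k \xi\alpha_k$.

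First I would pin down the growth of $\lambda_k$. The choice $\delta_k = \gamma_k/\alpha_k - \lambda_k$ makes the recursion $\lambda_{k+1} = \lambda_k + \delta_k$ collapse to the clean identity $\lambda_{k+1} = \gamma_k/\alpha_k = (c_\gamma/c_\alpha)(k+k_0)^{a-c}$, so $\lambda_k \asymp (k+k_0)^{a-c}$ and the effective inner step-size is $\beta_k = \alpha_k\lambda_k \asymp \gamma_k \asymp (k+k_0)^{-c}$; moreover $\eta_k = (k+1)^{-2c} \asymp \gamma_k^2$. With these rates the constraints in~\eqref{eq:step_size_theorem_momentum} reduce to elementary exponent comparisons: $(\gamma_{k-1}-\gamma_k)/\gamma_{k-1} \asymp (k+k_0)^{-1}$ and $\gamma_k^2 \asymp (k+k_0)^{-2c}$ are both dominated by $\eta_{k+1} \asymp (k+k_0)^{-2c}$ whenever $c \le 1/2$ (true in all three regimes), while $\delta_k/\gamma_k \asymp (k+k_0)^{a-1} = o(1)$ and $\delta_k/\lambda_k \asymp (k+k_0)^{-1} \le \mu_g\beta_k/8$ hold for $a<1$ and the prescribed constants $k_0, c_\gamma, c_\alpha$. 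This licenses Theorem~\ref{theorem:general_momentum}.

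Next I would insert the rates into the bound and use that $\sum_{k<K}(k+k_0)^{-p}$ is $\Theta(K^{1-p})$ for $p<1$, $\Theta(\log K)$ for $p=1$, and $\Theta(1)$ for $p>1$. The accumulated weight is $\sum_k \xi\alpha_k \asymp K^{1-a}$; the bias term behaves as $\sum_k \xi\alpha_k\lambda_k^{-2} \asymp \sum_k (k+k_0)^{-(3a-2c)}$, the $\sigma_f^2$ term as $\sum_k \eta_{k+1}^2/(\gamma_k\lambda_k) \asymp \sum_k (k+k_0)^{-(a+2c)}$, and the $\sigma_g^2$ term as $\sum_k \eta_{k+1}^2\lambda_k/\gamma_k \asymp \sum_k (k+k_0)^{-(4c-a)}$. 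Because the weights are decreasing, uniform sampling of $R$ is admissible: $\xi\alpha_{K-1}\sum_k \Exs[\|\grad F(x_k)\|^2] \le \sum_k \xi\alpha_k\Exs[\|\grad F(x_k)\|^2]$, and since $K\alpha_{K-1} \asymp \sum_k \alpha_k \asymp K^{1-a}$, dividing the theorem's bound by $K^{1-a}$ yields the rate. Substituting $(a,c) = (3/5,2/5)$ makes the bias and $\sigma_g^2$ exponents equal $1$ and the $\sigma_f^2$ exponent $7/5 > 1$, so the numerator is $\Theta(\log K)$ and the rate is $\log K/K^{2/5}$ (regime (a)); $(a,c) = (1/2,1/4)$ makes the bias and $\sigma_f^2$ exponents equal $1$ with $\sigma_g^2 = 0$, giving $\log K/K^{1/2}$ (regime (b)); and $(a,c) = (1/3,0)$ leaves only the bias term at exponent $1$, giving $\log K/K^{2/3}$, which may be stated for the last iterate in the deterministic regime, where the governing potential is non-increasing (regime (c)).

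The arithmetic is routine; the substantive point — and the step needing the most care — is recognizing that the listed pairs $(a,c)$ are exactly the minimizers of $a$ (equivalently, the maximizers of the decay exponent $1-a$) over the feasible region cut out by $3a-2c \ge 1$ (bias), $a+2c \ge 1$ (active only when $\sigma_f^2 > 0$), $4c-a \ge 1$ (active only when $\sigma_g^2 > 0$), together with $a \ge c$ and $c \le 1/2$. One has to verify both that each listed pair is feasible and that the binding constraints hold with equality, so that every surviving sum sits exactly at the critical harmonic exponent $p=1$ and the numerator grows no faster than $\log K$. The logarithmic factor is the unavoidable signature of hitting $p=1$; it could be traded away for a fractionally slower polynomial rate by backing the exponents off the active constraints.
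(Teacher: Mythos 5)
Your proposal is correct and follows essentially the same route as the paper's own proof: verify that the schedule \eqref{eq:step_size_momentum_detail} satisfies the hypotheses of Theorem~\ref{theorem:general_momentum} (via $\lambda_k \asymp (k+k_0)^{a-c}$, $\delta_k/\gamma_k = o(1)$, $\delta_k/\lambda_k \asymp k^{-1} \le \mu_g\beta_k/8$), reduce the theorem's bound to the exponent sums $3a-2c$, $a+2c$, $4c-a$, divide by $\sum_k \xi\alpha_k \asymp K^{1-a}$, and substitute the three $(a,c)$ pairs so the surviving sums sit at the harmonic exponent $p=1$. Your exponents in fact correct two sign typos in the paper's displayed sums (which read $(k+k_0)^{-(-2c-a)}$ and $(k+k_0)^{-(-4c+a)}$ but clearly intend $a+2c$ and $4c-a$), and your linear-programming verification that each $(a,c)$ pair minimizes $a$ over the feasible region is a worthwhile addition the paper leaves implicit.
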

The improvements in rates are different in different stochasticity regimes. 
For instance, the sample complexity required to achieve $\epsilon$-stationary point is $\tilde{O}_{\texttt{P}}(\epsilon^{-7/2})$ without momentum and $\tilde{O}_{\texttt{P}} (\epsilon^{-5/2})$ with momentum --- a factor of $O(\epsilon)$ improvement --- when stochastic noises are present in both levels. 
In contrast, when stochastic noises are only in the upper-level objective, then the overall sample complexity is tightened from $\tilde{O}_{\texttt{P}}(\epsilon^{-5/2})$ to $\tilde{O}_{\texttt{P}}(\epsilon^{-2})$, an $O(\epsilon^{-0.5})$ improvement. 
Whether Algorithm~\ref{algo:algo_name2} achieves the optimal sample complexity for fully first-order methods is an interesting topic for future work.

\subsection{Discussion}
Because our algorithms do not access second-order derivatives of $g$, their iteration convergence rate is slower, decreasing from  $O(k^{-1/2})$ (e.g., \cite{chen2021closing}) to $O(k^{-2/7})$ for algorithms without momentum and  from $O(k^{-2/3})$ (e.g., \cite{khanduri2021near}) to $O(k^{-2/5})$ for algorithms with momentum. 
This is not unexpected since we use less information. 
Our experiments, perhaps surprisingly, do not show a slowdown in the convergence speed.
In fact, first-order methods even outperform existing methods that use second-order information of $g$, as we show in Section \ref{section:experiment}.
We add that in practice, if a bias of $O(1/\lambda^2)$-bias in the solution is not critical to the overall performance, then we can set $\lambda_k := \lambda$ constant at all iterations and choose more aggressive step-sizes, {\it e.g,} $\alpha_k \asymp k^{-1/2}, \gamma_k \asymp k^{-1/2}$ as in \cite{chen2021closing}.
Such a strategy yields faster convergence to a certain biased point. 

When deterministic gradient oracles are available, the authors in \cite{ye2022bome} employed the so called {\it dynamic-barrier} method \cite{gong2021automatic} to decide the value of $\lambda_k$ at every iteration, based on $\|\grad_y g(x_k,z_{k+1}) - \grad_y g(x_k,y_{k+1})\|$. 
Such an approach requires precise knowledge of the latter quantity, which is not available in stochastic settings. 
Our result shows that  a simple design of polynomial-rate growth of $\lambda_k$ is sufficient; an adaptive choice is not needed for good practical performance. 
Further, the convergence rate reported in \cite{ye2022bome} is $k^{-1/4}$, while our result guarantees $k^{-2/3}$ convergence rate in deterministic settings. 

\section{Experiments}
\label{section:experiment}

\begin{figure}[t]
    \centering
    \begin{tabular}{cc}
        \includegraphics[width=75mm]{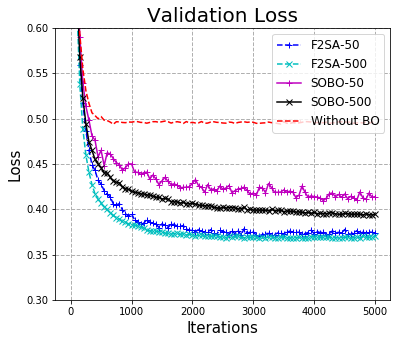} &
        \includegraphics[width=75mm]{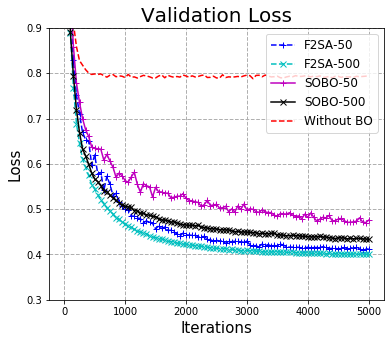} \\
        (a) & (b)
    \end{tabular}
    \caption{Outer objective (validation) loss with label corruption rate: (a) $p=0.1$, (b) $p=0.3$.}
    \label{fig:basic_experiment}
\end{figure}

We demonstrate the proposed algorithms on a data hyper-cleaning task involving MNIST \cite{deng2012mnist}.
We are given a noisy training set $\mD_{\text{train}} := \{(\tilde{x}_i, \tilde{y}_i)\}_{i=1}^{n}$ with the label $\tilde{y}_i$ being randomly corrupted with probability $p < 1$. 
We are also given a small but clean validation set $\mD_{\text{val}} := \{(x_i, y_i)\}_{i=1}^{m}$. 
The goal is to assign weights to each training data point so that the model trained on the weighted training set yields good performance on the validation set. 
This task can be formulated as bilevel opimization problem, as follows:
\begin{align*}
    \min_{\lambda} &\qquad \qquad   \tssum_{i=1}^m l(x_i, y_i; w^*) \\
    \text{s.t.}  &\qquad  w^* \in \arg \min_{w} \tssum_{i=1}^n \sigma(\lambda_i) l(\tilde{x}_i, \tilde{y}_i; w) + c \|w\|^2.  
\end{align*}
where $\sigma(\cdot)$ is a sigmoid function, $l(x,y;w)$ is a logistic loss function with parameter $w$ and $c$ is a regularization constant. We use $n=19000$ training samples and $m=1000$ clean validation samples with regularization parameter $c = 0.01$. 
We do not include momentum-assisted methods in our discussion, since we do not observe a significant improvement over the \algname \,
approach of Algorithm~\ref{algo:algo_name} .

We demonstrate the performance of Algorithm \ref{algo:algo_name} (\algname) 
and the second-order based method (SOBO) with batch sizes $50$ and $500$. 
We note that several existing second-order methods are in principle the same when momentum or variance-reduction techniques are omitted \cite{ghadimi2018approximation, hong2020two, chen2021closing}, so we use the implementation of stocBiO \cite{ji2021bilevel} as a representative of the other second-order methods.
As a baseline, we also add a result from training without bilevel formulation (Without BO), {\it i.e.,} train on all samples as usual, ignoring the label corruption. 
Results are shown in Figure~\ref{fig:basic_experiment}.\footnote{We report our best results obtained with different hyper-parameters for each algorithm. }

Although iteration complexity is worse for first-order methods than SOBO, we observe that \algname~is at least on par with SOBO in this example. 
It can even give superior performance when the batch size is small. 
We conjecture that stochastic noises in Hessian become significantly larger than those in gradients, degrading the performance of SOBO. 
In our experiment, we also observe that Neumann approximation \cite{ghadimi2018approximation} for estimating the Hessian-inverse may induce non-negligible bias in practice.\footnote{We used degree 5 approximation in our experiment. A larger degree approximation did not yield significant improvement.}
In contrast, our fully first-order method \algname~is much less sensitive to small batch sizes and free of bias.

\section{Future work}
We study fully first-order methods for stochastic bilevel optimization and their non-asymptotic performance. We conclude the paper with discussions on several future directions. 

\paragraph{Lower Bound} As we discussed, we observe a gap between the fully first-order method and existing methods that use second-order information of $g$. We conjecture that this gap is fundamental, and it would be an interesting future question to investigate the fundamental limits of fully first-order methods.

\paragraph{More Lower-Level Problems} There are already several recent work that considers a more challenging case when the lower-level optimization problem can be non-strongly-convex \cite{liu2021towards, liu2021value, arbel2022non} and non-smooth \cite{lu2023first}. The potential benefit of the first-order method over existing second-order based methods is that it can still be considered to tackle such scenarios, whereas the formula \eqref{eq:gradF} is only available for well-conditioned lower-level problems. We believe it is an important future direction to study a more general class of \eqref{problem:bilevel} beyond strongly-convex lower-level problems with fully first-order methods. Adding variable-dependent constraints to the lower-level problem would also lead to an interesting extension of fully first-order approaches.

\bibliographystyle{abbrv}
\bibliography{main}

\begin{thebibliography}{10}

\bibitem{amini2019iterative}
M.~Amini and F.~Yousefian.
\newblock An iterative regularized incremental projected subgradient method for
  a class of bilevel optimization problems.
\newblock In {\em 2019 American Control Conference (ACC)}, pages 4069--4074.
  IEEE, 2019.

\bibitem{amini2019iterative2}
M.~Amini and F.~Yousefian.
\newblock An iterative regularized mirror descent method for ill-posed
  nondifferentiable stochastic optimization.
\newblock {\em arXiv preprint arXiv:1901.09506}, 2019.

\bibitem{anandalingam1990solution}
G.~Anandalingam and D.~White.
\newblock A solution method for the linear static stackelberg problem using
  penalty functions.
\newblock {\em IEEE Transactions on automatic control}, 35(10):1170--1173,
  1990.

\bibitem{arbel2022non}
M.~Arbel and J.~Mairal.
\newblock Non-convex bilevel games with critical point selection maps.
\newblock {\em arXiv preprint arXiv:2207.04888}, 2022.

\bibitem{bao2021stability}
F.~Bao, G.~Wu, C.~Li, J.~Zhu, and B.~Zhang.
\newblock Stability and generalization of bilevel programming in hyperparameter
  optimization.
\newblock {\em Advances in Neural Information Processing Systems},
  34:4529--4541, 2021.

\bibitem{bracken1973mathematical}
J.~Bracken and J.~T. McGill.
\newblock Mathematical programs with optimization problems in the constraints.
\newblock {\em Operations Research}, 21(1):37--44, 1973.

\bibitem{chen2022single}
T.~Chen, Y.~Sun, Q.~Xiao, and W.~Yin.
\newblock A single-timescale method for stochastic bilevel optimization.
\newblock In {\em International Conference on Artificial Intelligence and
  Statistics}, pages 2466--2488. PMLR, 2022.

\bibitem{chen2021closing}
T.~Chen, Y.~Sun, and W.~Yin.
\newblock Closing the gap: Tighter analysis of alternating stochastic gradient
  methods for bilevel problems.
\newblock {\em Advances in Neural Information Processing Systems},
  34:25294--25307, 2021.

\bibitem{colson2007overview}
B.~Colson, P.~Marcotte, and G.~Savard.
\newblock An overview of bilevel optimization.
\newblock {\em Annals of operations research}, 153(1):235--256, 2007.

\bibitem{couellan2015bi}
N.~Couellan and W.~Wang.
\newblock Bi-level stochastic gradient for large scale support vector machine.
\newblock {\em Neurocomputing}, 153:300--308, 2015.

\bibitem{couellan2016convergence}
N.~Couellan and W.~Wang.
\newblock On the convergence of stochastic bi-level gradient methods.
\newblock {\em Optimization}, 2016.

\bibitem{dagreou2022framework}
M.~Dagr{\'e}ou, P.~Ablin, S.~Vaiter, and T.~Moreau.
\newblock A framework for bilevel optimization that enables stochastic and
  global variance reduction algorithms.
\newblock {\em arXiv preprint arXiv:2201.13409}, 2022.

\bibitem{deng2012mnist}
L.~Deng.
\newblock The mnist database of handwritten digit images for machine learning
  research [best of the web].
\newblock {\em IEEE signal processing magazine}, 29(6):141--142, 2012.

\bibitem{ferris1991finite}
M.~C. Ferris and O.~L. Mangasarian.
\newblock Finite perturbation of convex programs.
\newblock {\em Applied Mathematics and Optimization}, 23(1):263--273, 1991.

\bibitem{franceschi2018bilevel}
L.~Franceschi, P.~Frasconi, S.~Salzo, R.~Grazzi, and M.~Pontil.
\newblock Bilevel programming for hyperparameter optimization and
  meta-learning.
\newblock In {\em International Conference on Machine Learning}, pages
  1568--1577. PMLR, 2018.

\bibitem{ghadimi2018approximation}
S.~Ghadimi and M.~Wang.
\newblock Approximation methods for bilevel programming.
\newblock {\em arXiv preprint arXiv:1802.02246}, 2018.

\bibitem{gidel2018variational}
G.~Gidel, H.~Berard, G.~Vignoud, P.~Vincent, and S.~Lacoste-Julien.
\newblock A variational inequality perspective on generative adversarial
  networks.
\newblock In {\em International Conference on Learning Representations}, 2018.

\bibitem{giovannelli2021bilevel}
T.~Giovannelli, G.~Kent, and L.~N. Vicente.
\newblock Bilevel stochastic methods for optimization and machine learning:
  Bilevel stochastic descent and darts.
\newblock {\em arXiv preprint arXiv:2110.00604}, 2021.

\bibitem{gong2021automatic}
C.~Gong, X.~Liu, and Q.~Liu.
\newblock Automatic and harmless regularization with constrained and
  lexicographic optimization: A dynamic barrier approach.
\newblock {\em Advances in Neural Information Processing Systems},
  34:29630--29642, 2021.

\bibitem{goodfellow2020generative}
I.~Goodfellow, J.~Pouget-Abadie, M.~Mirza, B.~Xu, D.~Warde-Farley, S.~Ozair,
  A.~Courville, and Y.~Bengio.
\newblock Generative adversarial networks.
\newblock {\em Communications of the ACM}, 63(11):139--144, 2020.

\bibitem{guo2021randomized}
Z.~Guo, Q.~Hu, L.~Zhang, and T.~Yang.
\newblock Randomized stochastic variance-reduced methods for multi-task
  stochastic bilevel optimization.
\newblock {\em arXiv preprint arXiv:2105.02266}, 2021.

\bibitem{hong2020two}
M.~Hong, H.-T. Wai, Z.~Wang, and Z.~Yang.
\newblock A two-timescale framework for bilevel optimization: Complexity
  analysis and application to actor-critic.
\newblock {\em arXiv preprint arXiv:2007.05170}, 2020.

\bibitem{huang2021biadam}
F.~Huang and H.~Huang.
\newblock Biadam: Fast adaptive bilevel optimization methods.
\newblock {\em arXiv preprint arXiv:2106.11396}, 2021.

\bibitem{ishizuka1992double}
Y.~Ishizuka and E.~Aiyoshi.
\newblock Double penalty method for bilevel optimization problems.
\newblock {\em Annals of Operations Research}, 34(1):73--88, 1992.

\bibitem{ji2021bilevel}
K.~Ji, J.~Yang, and Y.~Liang.
\newblock Bilevel optimization: Convergence analysis and enhanced design.
\newblock In {\em International Conference on Machine Learning}, pages
  4882--4892. PMLR, 2021.

\bibitem{jiang2022conditional}
R.~Jiang, N.~Abolfazli, A.~Mokhtari, and E.~Y. Hamedani.
\newblock A conditional gradient-based method for simple bilevel optimization
  with convex lower-level problem, 2022.

\bibitem{khanduri2021near}
P.~Khanduri, S.~Zeng, M.~Hong, H.-T. Wai, Z.~Wang, and Z.~Yang.
\newblock A near-optimal algorithm for stochastic bilevel optimization via
  double-momentum.
\newblock {\em Advances in Neural Information Processing Systems},
  34:30271--30283, 2021.

\bibitem{konda1999actor}
V.~Konda and J.~Tsitsiklis.
\newblock Actor-critic algorithms.
\newblock {\em Advances in neural information processing systems}, 12, 1999.

\bibitem{kunapuli2008bilevel}
G.~Kunapuli, K.~P. Bennett, J.~Hu, and J.-S. Pang.
\newblock Bilevel model selection for support vector machines.
\newblock {\em Data mining and mathematical programming}, 45:129--158, 2008.

\bibitem{liu2021value}
R.~Liu, X.~Liu, X.~Yuan, S.~Zeng, and J.~Zhang.
\newblock A value-function-based interior-point method for non-convex bi-level
  optimization.
\newblock In {\em International Conference on Machine Learning}, pages
  6882--6892. PMLR, 2021.

\bibitem{liu2021towards}
R.~Liu, Y.~Liu, S.~Zeng, and J.~Zhang.
\newblock Towards gradient-based bilevel optimization with non-convex followers
  and beyond.
\newblock {\em Advances in Neural Information Processing Systems},
  34:8662--8675, 2021.

\bibitem{lu2023first}
Z.~Lu and S.~Mei.
\newblock First-order penalty methods for bilevel optimization.
\newblock {\em arXiv preprint arXiv:2301.01716}, 2023.

\bibitem{margossian2019review}
C.~C. Margossian.
\newblock A review of automatic differentiation and its efficient
  implementation.
\newblock {\em Wiley interdisciplinary reviews: data mining and knowledge
  discovery}, 9(4):e1305, 2019.

\bibitem{mehra2021penalty}
A.~Mehra and J.~Hamm.
\newblock Penalty method for inversion-free deep bilevel optimization.
\newblock In {\em Asian Conference on Machine Learning}, pages 347--362. PMLR,
  2021.

\bibitem{nesterov2018lectures}
Y.~Nesterov et~al.
\newblock {\em Lectures on convex optimization}, volume 137.
\newblock Springer, 2018.

\bibitem{rajeswaran2019meta}
A.~Rajeswaran, C.~Finn, S.~M. Kakade, and S.~Levine.
\newblock Meta-learning with implicit gradients.
\newblock {\em Advances in neural information processing systems}, 32, 2019.

\bibitem{solodov2007explicit}
M.~Solodov.
\newblock An explicit descent method for bilevel convex optimization.
\newblock {\em Journal of Convex Analysis}, 14(2):227, 2007.

\bibitem{sow2022constrained}
D.~Sow, K.~Ji, Z.~Guan, and Y.~Liang.
\newblock A constrained optimization approach to bilevel optimization with
  multiple inner minima.
\newblock {\em arXiv preprint arXiv:2203.01123}, 2022.

\bibitem{stackelberg1952theory}
H.~v. Stackelberg et~al.
\newblock Theory of the market economy.
\newblock 1952.

\bibitem{sutton2018reinforcement}
R.~S. Sutton and A.~G. Barto.
\newblock {\em Reinforcement learning: An introduction}.
\newblock MIT press, 2018.

\bibitem{vicente1994descent}
L.~Vicente, G.~Savard, and J.~J{\'u}dice.
\newblock Descent approaches for quadratic bilevel programming.
\newblock {\em Journal of Optimization theory and applications},
  81(2):379--399, 1994.

\bibitem{white1993penalty}
D.~J. White and G.~Anandalingam.
\newblock A penalty function approach for solving bi-level linear programs.
\newblock {\em Journal of Global Optimization}, 3(4):397--419, 1993.

\bibitem{wright1999numerical}
S.~Wright, J.~Nocedal, et~al.
\newblock Numerical optimization.
\newblock {\em Springer Science}, 35(67-68):7, 1999.

\bibitem{yang2021provably}
J.~Yang, K.~Ji, and Y.~Liang.
\newblock Provably faster algorithms for bilevel optimization.
\newblock {\em Advances in Neural Information Processing Systems},
  34:13670--13682, 2021.

\bibitem{ye2022bome}
M.~Ye, B.~Liu, S.~Wright, P.~Stone, and Q.~Liu.
\newblock Bome! bilevel optimization made easy: A simple first-order approach.
\newblock {\em arXiv preprint arXiv:2209.08709}, 2022.

\end{thebibliography}

\newpage 

\appendix

\begin{appendices}

\section{Auxiliary Lemmas}
All deferred proofs in the main text and appendix are directed to Appendix \ref{appendix:deferred_proof}.

\subsection{Additional Notation}
\begin{table}[H]
    \centering
    \begin{tabular}{| c | l | l |}
        \hline
         \textrm{Symbol} & \textrm{Meaning} & \textrm{Less than} \\ \hline
         $l_{f,0}$ & Bound of $\|\grad_x f\|, \|\grad_y f\|$ & $\cdot$ \\ \hline
         $l_{f,1}$ & Smoothness of $f$ & $\cdot$ \\ \hline
         $l_{g,0}$ & Bound of $\|\grad_x g\|$ & $\cdot$ \\ \hline
         $l_{g,1}$ & Smoothness of $g$ & $\cdot$ \\ \hline
         $\mu_g$ & Strong-convexity of $g$ & $\cdot$ \\ \hline
         $l_{g,2}$ & Hessian-continuity of $g$ & $\cdot$ \\ \hline
         $M_f$ & Second-order moment of $\grad f(x,y;\zeta)$ & $l_{f,0}^2 + \sigma_f^2$ \\ \hline
         $M_g$ & Second-order moment of $\grad g(x,y;\phi)$ & $l_{g,0}^2 + \sigma_g^2$ \\ \hline
         $l_{f,2}$ & Hessian-continuity of $f$ (with Assumption \ref{assumption:extra_smooth_f}) & $\cdot$ \\ \hline
         $l_{F,1}$ & Smoothness of $F(x)$ & $l_{*,0} \left(l_{f,1} + \frac{l_{g,1}^2}{\mu_g} + \frac{2l_{f,0}l_{g,1}l_{g,2}}{\mu_g^2} \right)$\\ \hline
         $l_{\lambda,0}$ & Lipschitzness of $y_\lambda^*(x)$ (for all $\lambda \ge 2l_{f,1}/\mu_g$) & $\frac{3 l_{g,1}}{\mu_g}$ \\ \hline
         $l_{\lambda,1}$ & Smoothness of $y_\lambda^*(x)$ (for $\lambda \ge 2l_{f,1}/\mu_g$ with Assumption \ref{assumption:extra_smooth_f}) & $32 (l_{g,2} + \lambda^{-1} \cdot l_{f,2}) \frac{l_{g,1}^2}{\mu_g^3}$ \\ \hline
         $l_{*,0}$ & $= 1 + \max_{\lambda \ge 2l_{f,1}/\mu_g}l_{\lambda,0}$ & $\cdot$ \\ \hline
         $l_{*,1}$ & $= \max_{\lambda \ge 2l_{f,1}/\mu_g} l_{\lambda, 1}$ & $\cdot$ \\ \hline
    \end{tabular}
    \caption{Meaning of Constants}
    \label{tab:constant_relations}
\end{table}
To simplify the representation for the movement of variables, we often use $q_k^x$, $q_k^y$ and $q_k^z$ defined as
\begin{align}
    q_k^x &:= \grad_x f(x_k, y_{k+1}) + \lambda_k (\grad_x g(x_k, y_{k+1}) - \grad_x g(x_k, z_{k+1})), \nonumber \\
    q_{k,t}^y &:= \grad_y f(x_k, y_{k,t}) + \lambda_k \grad_y g(x_k, y_{k,t}), \nonumber \\
    q_{k,t}^z &:= \grad_y g(x_k, z_{k,t}). \label{eq:qk_def}
\end{align}
The above quantities are the expected movements of $x_k, y_k^{(t)}, z_k^{(t)}$ respectively if there are no stochastic noises in gradient oracles. We also summarize symbols and their meanings for instance-specific constants in Table \ref{tab:constant_relations}.

\subsection{Auxiliary Lemmas}
We first state a few lemmas that will be useful in our main proofs.

\begin{lemma}
    \label{lemma:outer_F_smooth}
    $F(x) = f(x, y^*(x))$ is $l_{F,1}$-smooth where
    \begin{align*}
        l_{F,1} \le l_{*,0} \left(l_{f,1} + \frac{l_{g,1}^2}{\mu_g} + \frac{2l_{f,0}l_{g,1}l_{g,2}}{\mu_g^2} \right).
    \end{align*}
\end{lemma}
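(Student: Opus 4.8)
The plan is to start from the closed-form hyper-gradient \eqref{eq:gradF}, namely $\grad F(x) = \grad_x f(x,y^*(x)) - \grad_{xy}^2 g(x,y^*(x))\,[\grad_{yy}^2 g(x,y^*(x))]^{-1}\,\grad_y f(x,y^*(x))$, and to bound the Lipschitz constant of $\grad F$ factor by factor. Since every quantity is evaluated along the curve $x \mapsto (x, y^*(x))$, the first preliminary step is to control how fast $y^*(x)$ moves. Differentiating the optimality condition $\grad_y g(x, y^*(x)) = 0$ (equivalently, sending $\lambda \to \infty$ in Lemma~\ref{lemma:y_star_lagrangian_continuity}) and combining $\mu_g$-strong convexity with $l_{g,1}$-smoothness of $g$ gives $\|y^*(x_1) - y^*(x_2)\| \le (l_{g,1}/\mu_g)\|x_1 - x_2\|$, so every composite map along this curve picks up a chain-rule factor of $(1 + l_{g,1}/\mu_g) \le l_{*,0}$.

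Next I would record, for each building block (all understood along $x \mapsto (x, y^*(x))$), both a uniform norm bound and a Lipschitz constant in $x$. Writing $A(x) = \grad_{xy}^2 g$, $H(x) = \grad_{yy}^2 g$, and $b(x) = \grad_y f$: smoothness of $g$ gives $\|A\|, \|H\| \le l_{g,1}$; strong convexity gives $\|H^{-1}\| \le 1/\mu_g$; and Assumption~\ref{assumption:nice_functions} gives $\|b\| \le l_{f,0}$. For the Lipschitz constants, $l_{g,2}$-Lipschitzness of $\grad^2 g$ together with the chain rule through $y^*$ yields $\mathrm{Lip}(A), \mathrm{Lip}(H) \le l_{g,2}(1 + l_{g,1}/\mu_g)$; the resolvent identity $H(x_1)^{-1} - H(x_2)^{-1} = H(x_1)^{-1}(H(x_2) - H(x_1))H(x_2)^{-1}$ upgrades this to $\mathrm{Lip}(H^{-1}) \le l_{g,2}(1 + l_{g,1}/\mu_g)/\mu_g^2$; and $l_{f,1}$-smoothness of $f$ gives both $\mathrm{Lip}(b) \le l_{f,1}(1 + l_{g,1}/\mu_g)$ and $\mathrm{Lip}(\grad_x f(\cdot, y^*)) \le l_{f,1}(1 + l_{g,1}/\mu_g)$.

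Finally I would assemble these with the product rule for Lipschitz maps, grouping the product as $(A H^{-1})\,b$ to obtain
\begin{align*}
    \mathrm{Lip}(\grad F) &\le \mathrm{Lip}(\grad_x f(\cdot, y^*)) + \mathrm{Lip}(A H^{-1})\,\|b\| + \|A H^{-1}\|\,\mathrm{Lip}(b),
\end{align*}
then expanding $\mathrm{Lip}(A H^{-1}) \le \mathrm{Lip}(A)\|H^{-1}\| + \|A\|\,\mathrm{Lip}(H^{-1})$ and using $\|A H^{-1}\| \le l_{g,1}/\mu_g$. Substituting the bounds above and factoring out $(1 + l_{g,1}/\mu_g) \le l_{*,0}$ produces a term $l_{f,1}$ from the direct dependence, a term of order $l_{g,1}^2/\mu_g$ from the $\|A H^{-1}\|\,\mathrm{Lip}(b)$ contribution, and two contributions proportional to $l_{f,0}l_{g,2}$ that merge into $2 l_{f,0} l_{g,1} l_{g,2}/\mu_g^2$ after invoking $\mu_g \le l_{g,1}$; this reproduces the claimed bound $l_{*,0}(l_{f,1} + l_{g,1}^2/\mu_g + 2 l_{f,0} l_{g,1} l_{g,2}/\mu_g^2)$. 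The only delicate points — and the main obstacle — are the constant bookkeeping when differentiating the matrix inverse (the resolvent-identity step, which is where the extra $\mu_g^{-2}$ enters) and correctly tracking the repeated $(1 + l_{g,1}/\mu_g)$ factors introduced by the chain rule through $y^*$; the remaining manipulations are routine triangle inequalities.
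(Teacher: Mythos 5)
Your proposal is correct and follows essentially the same route as the paper's proof (which itself condenses Lemma 2.2 of \cite{ghadimi2018approximation}): take the closed-form hyper-gradient \eqref{eq:gradF}, bound each factor's norm and Lipschitz constant along $x \mapsto (x,y^*(x))$, absorb the chain-rule factor from the $O(l_{g,1}/\mu_g)$-Lipschitzness of $y^*$ into $l_{*,0}$, treat the inverse Hessian via the resolvent identity with $\|\grad_{yy}^2 g^{-1}\| \le 1/\mu_g$, and merge the two $l_{f,0}l_{g,2}$ contributions using $\mu_g \le l_{g,1}$. The only point worth noting is that your term $\|A H^{-1}\|\,\mathrm{Lip}(b) \le l_{f,1} l_{g,1}/\mu_g$ yields the stated $l_{g,1}^2/\mu_g$ only via the additional substitution $l_{f,1} \le l_{g,1}$, but the paper's own displayed intermediate bound makes exactly the same silent step, so your bookkeeping matches theirs.
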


\begin{lemma}
    \label{lemma:relation_Lagrangian_F}
    For any $x,y, \lambda > 0$, the following holds:
    \begin{align*}
        &\|\grad F(x) - \grad_x \mL_\lambda(x,y) + \grad_{xy}^2 g(x,y^*(x))^\top \grad^2_{yy} g(x,y^*(x))^{-1} \grad_y \mL_\lambda(x,y)\| \\
        &\qquad \le 2 (l_{g,1}/\mu_g) \|y-y^*(x)\| \left(l_{f,1} + \lambda \cdot \min(2l_{g,1}, l_{g,2} \|y-y^*(x)\|)\right).
    \end{align*}
\end{lemma}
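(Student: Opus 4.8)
The plan is to substitute the closed forms of all three gradient objects and reduce the expression to a sum of three controllable pieces. Writing $y^* = y^*(x)$, $J = \grad_{xy}^2 g(x,y^*)^\top$, and $H = \grad_{yy}^2 g(x,y^*)$, I would recall the hypergradient formula \eqref{eq:gradF}, namely $\grad F(x) = \grad_x f(x,y^*) - J H^{-1}\grad_y f(x,y^*)$, together with $\grad_x \mL_\lambda(x,y) = \grad_x f(x,y) + \lambda(\grad_x g(x,y) - \grad_x g(x,y^*))$ (using $\grad g^*(x) = \grad_x g(x,y^*)$) and $\grad_y \mL_\lambda(x,y) = \grad_y f(x,y) + \lambda \grad_y g(x,y)$. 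Collecting terms, the quantity inside the norm splits as $A + B + \lambda C$, where $A = \grad_x f(x,y^*) - \grad_x f(x,y)$, $B = J H^{-1}(\grad_y f(x,y) - \grad_y f(x,y^*))$, and $C = J H^{-1}\grad_y g(x,y) - (\grad_x g(x,y) - \grad_x g(x,y^*))$. Note $C$ vanishes at $y = y^*$, consistent with the bound collapsing to $0$ there.

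The first two pieces are routine. By $l_{f,1}$-smoothness of $f$, $\|A\| \le l_{f,1}\|y - y^*\|$ and $\|\grad_y f(x,y) - \grad_y f(x,y^*)\| \le l_{f,1}\|y - y^*\|$; combined with $\|J\| \le l_{g,1}$ (a block of the $l_{g,1}$-smooth Hessian of $g$) and $\|H^{-1}\| \le 1/\mu_g$ (strong convexity), this gives $\|B\| \le (l_{g,1}/\mu_g) l_{f,1}\|y-y^*\|$. Using the elementary fact $\mu_g \le l_{g,1}$, the two bounds merge into $\|A\| + \|B\| \le 2(l_{g,1}/\mu_g) l_{f,1}\|y - y^*\|$, which is exactly the $l_{f,1}$ part of the claimed bound.

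The crux is term $C$, where I would use the fundamental theorem of calculus along the segment $y_s := y^* + s(y - y^*)$. Since $\grad_y g(x,y^*) = 0$, I can write $\grad_y g(x,y) = \int_0^1 H(y_s)(y - y^*)\,ds$ and $\grad_x g(x,y) - \grad_x g(x,y^*) = \int_0^1 J(y_s)(y - y^*)\,ds$, where $H(y_s) := \grad_{yy}^2 g(x,y_s)$ and $J(y_s) := \grad_{xy}^2 g(x,y_s)^\top$ are the Hessian blocks along the path, so $H(y^*) = H$ and $J(y^*) = J$. Then $C = \int_0^1 [J H^{-1} H(y_s) - J(y_s)](y-y^*)\,ds$, and the key algebraic step is the splitting $J H^{-1} H(y_s) - J(y_s) = J H^{-1}(H(y_s) - H) - (J(y_s) - J)$, which exposes $C$ as being driven purely by how far the Hessian blocks have moved from their values at $y^*$. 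I would then bound the integrand in two ways: (i) via $l_{g,2}$-Lipschitzness of $\grad^2 g$, giving $\|H(y_s) - H\|, \|J(y_s) - J\| \le l_{g,2} s\|y - y^*\|$, hence after integrating $\|C\| \le (l_{g,1}/\mu_g) l_{g,2}\|y - y^*\|^2$; and (ii) via the crude bounds $\|H(y_s)\|, \|J(y_s)\| \le l_{g,1}$, giving $\|C\| \le 2(l_{g,1}/\mu_g) l_{g,1}\|y - y^*\|$. Taking the smaller of the two (again absorbing constants through $\mu_g \le l_{g,1}$) yields $\lambda\|C\| \le 2(l_{g,1}/\mu_g)\|y-y^*\|\cdot\lambda\min(2l_{g,1}, l_{g,2}\|y-y^*\|)$. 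Summing $\|A\| + \|B\| + \lambda\|C\|$ produces the stated bound.

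I expect the main obstacle to be the bookkeeping for term $C$: getting the transpose and index conventions for $\grad_{xy}^2 g$ consistent between the hypergradient formula and the Jacobian of $\grad_x g(x,\cdot)$, and verifying that the split $J H^{-1}H(y_s) - J(y_s) = JH^{-1}(H(y_s)-H) - (J(y_s)-J)$ is precisely what lets a single $\min$ capture both the curvature-Lipschitz regime (small $\|y - y^*\|$) and the bounded-curvature regime (large $\|y - y^*\|$). The estimates for $A$ and $B$ are standard smoothness arguments and should present no difficulty.
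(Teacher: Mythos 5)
Your proposal is correct and takes essentially the same route as the paper: after unfolding $\grad F$, $\grad_x \mL_\lambda$, and $\grad_y \mL_\lambda$, your term $\lambda C$ is algebraically identical to the paper's pair of second-order Taylor remainders of $\grad_x g$ and $\grad_y g$ at $y^*$ (since $\grad_y g(x,y^*)=0$, one has $C = J H^{-1}\bigl(\grad_y g(x,y) - H(y-y^*)\bigr) - \bigl(\grad_x g(x,y) - \grad_x g(x,y^*) - J(y-y^*)\bigr)$), and your two estimates on $C$ reproduce the paper's $\min\bigl(l_{g,2}\|y-y^*\|^2,\, 2l_{g,1}\|y-y^*\|\bigr)$ remainder bounds, which you merely re-derive via an explicit fundamental-theorem-of-calculus integral rather than invoking them directly. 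The bounds on $A$ and $B$ and the absorption of constants through $l_{g,1}/\mu_g \ge 1$ match the paper's argument exactly.
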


\begin{lemma}
    \label{lemma:nice_y_star_lagrangian}
    Under Assumptions \ref{assumption:nice_functions}, \ref{assumption:extra_nice_g} and \ref{assumption:extra_smooth_f}, and $\lambda > 2l_{f,1} / \mu_g$, a function $y_{\lambda}^*(x)$ is $l_{\lambda,1}$-smooth: for any $x_1, x_2 \in X$, we have $$\|\grad y_\lambda^*(x_1) - \grad y_\lambda^*(x_2)\| \le l_{\lambda, 1} \|x_1 - x_2\|$$ where $l_{\lambda,1} \le 32 (l_{g,2} + \lambda^{-1} l_{f,2}) l_{g,1}^2 / \mu_g^3$.
\end{lemma}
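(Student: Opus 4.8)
The plan is to derive an explicit formula for the Jacobian $\grad y_\lambda^*(x)$ via the implicit function theorem and then bound its Lipschitz constant directly. First observe that since the term $-\lambda g^*(x)$ in $\mL_\lambda(x,y)$ does not depend on $y$, we have $y_\lambda^*(x) = \arg\min_y \Phi_\lambda(x,y)$ with $\Phi_\lambda(x,y) := f(x,y) + \lambda g(x,y)$. Under Assumptions \ref{assumption:extra_nice_g} and \ref{assumption:extra_smooth_f}, $\Phi_\lambda$ is twice continuously differentiable, and for $\lambda \ge 2l_{f,1}/\mu_g$ the Hessian $H(x) := \grad_{yy}^2 \Phi_\lambda(x, y_\lambda^*(x)) = \grad_{yy}^2 f + \lambda \grad_{yy}^2 g$ satisfies $H(x) \succeq (\lambda\mu_g - l_{f,1}) I \succeq (\lambda \mu_g/2) I$, hence is invertible with $\|H(x)^{-1}\| \le 2/(\lambda\mu_g)$. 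Differentiating the optimality condition $\grad_y \Phi_\lambda(x, y_\lambda^*(x)) = 0$ in $x$ yields the standard formula
$$\grad y_\lambda^*(x) = - H(x)^{-1} B(x), \qquad B(x) := \grad_{yx}^2 \Phi_\lambda(x, y_\lambda^*(x)) = \grad_{yx}^2 f + \lambda \grad_{yx}^2 g,$$
with $\|B(x)\| \le l_{f,1} + \lambda l_{g,1}$ by the joint smoothness in Assumption \ref{assumption:nice_functions}.

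Next I would bound the Lipschitz constants of the maps $x \mapsto H(x)$ and $x \mapsto B(x)$. Both are compositions of the Hessians of $f,g$ (which are $l_{f,2}$- and $l_{g,2}$-Lipschitz by Assumptions \ref{assumption:extra_smooth_f} and \ref{assumption:extra_nice_g}) with the map $x \mapsto (x, y_\lambda^*(x))$. Since $y_\lambda^*$ is $l_{\lambda,0}$-Lipschitz with $l_{\lambda,0} \le 3l_{g,1}/\mu_g$ (available here because Lemma \ref{lemma:y_star_lagrangian_continuity} holds for all $\lambda \ge 2l_{f,1}/\mu_g$), the displacement from $(x_1, y_\lambda^*(x_1))$ to $(x_2, y_\lambda^*(x_2))$ is controlled by $(1 + l_{\lambda,0})\|x_1-x_2\|$, giving
$$\max\bigl(\|H(x_1)-H(x_2)\|,\ \|B(x_1)-B(x_2)\|\bigr) \le (l_{f,2} + \lambda l_{g,2})(1+l_{\lambda,0})\,\|x_1-x_2\|.$$

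Then I would combine these using the resolvent identity $H(x_1)^{-1} - H(x_2)^{-1} = H(x_1)^{-1}(H(x_2)-H(x_1))H(x_2)^{-1}$ together with the splitting
$$\grad y_\lambda^*(x_1) - \grad y_\lambda^*(x_2) = -H(x_1)^{-1}\bigl(B(x_1)-B(x_2)\bigr) - \bigl(H(x_1)^{-1}-H(x_2)^{-1}\bigr)B(x_2).$$
Taking norms and inserting the bounds above produces a factor $\tfrac{2}{\lambda\mu_g}(l_{f,2}+\lambda l_{g,2})(1+l_{\lambda,0})$ from the first term and an additional multiplicative $\tfrac{2}{\lambda\mu_g}(l_{f,1}+\lambda l_{g,1})$ from the second.

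The remaining work is bookkeeping of constants: using $\lambda \ge 2l_{f,1}/\mu_g$ (so that $l_{f,1} + \lambda l_{g,1} \le \tfrac{3}{2}\lambda l_{g,1}$) and $\mu_g \le l_{g,1}$ (so that $1 + l_{\lambda,0} \le 4l_{g,1}/\mu_g$ and $1 + \tfrac{2(l_{f,1}+\lambda l_{g,1})}{\lambda\mu_g} \le 4l_{g,1}/\mu_g$), the product collapses to the claimed $l_{\lambda,1} \le 32(l_{g,2} + \lambda^{-1}l_{f,2})\,l_{g,1}^2/\mu_g^3$. I expect the only genuinely delicate points to be confirming that the Lipschitz estimate of Lemma \ref{lemma:y_star_lagrangian_continuity} is legitimately applicable and keeping the transpose/dimension conventions in the implicit-function formula consistent; the constant-tracking itself is routine.
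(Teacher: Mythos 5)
Your proposal is correct and follows essentially the same route as the paper's proof: both differentiate the optimality condition $\grad_y f(x,y_\lambda^*(x)) + \lambda \grad_y g(x,y_\lambda^*(x)) = 0$ to obtain $\grad y_\lambda^*(x) = -H(x)^{-1}B(x)$, bound the Lipschitz constants of $H$ and $B$ through the $(1+l_{\lambda,0})$-Lipschitzness of $x \mapsto (x, y_\lambda^*(x))$ from Lemma \ref{lemma:y_star_lagrangian_continuity}, and track constants down to $32(l_{g,2} + \lambda^{-1} l_{f,2})\,l_{g,1}^2/\mu_g^3$. Your resolvent-identity splitting is merely an algebraic rearrangement of the paper's step, which instead bounds $\frac{\lambda \mu_g}{2}\|\grad y_\lambda^*(x_1) - \grad y_\lambda^*(x_2)\|$ directly using $\max_x \|\grad y_\lambda^*(x)\| \le l_{\lambda,0}$, and the resulting constants agree.
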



\begin{lemma}
    \label{lemma:y_star_contraction}
    For any fixed $\lambda > 2l_{f,1} / \mu_g$, at every $k$ iteration conditioned on $\mathcal{F}_k$, we have
    \begin{align*}
        \Exs[\|y^*(x_{k+1}) - y^*(x_k)\|^2| \mathcal{F}_k] \le \xi^2 l_{*,0}^2 \left( \alpha_k^2 \Exs[\|q_k^x\|^2 | \mathcal{F}_k] + \alpha_k^2 \sigma_f^2 + \beta_k^2 \sigma_g^2 \right). 
    \end{align*}
\end{lemma}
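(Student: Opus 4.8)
The plan is to reduce the claim to two ingredients: the Lipschitz continuity of the true lower-level map $y^*(\cdot)$, and a bias--variance decomposition of the stochastic $x$-update controlled by Assumption~\ref{assumption:gradient_variance}. First I would record that $y^*$ is Lipschitz. Differentiating the optimality condition $\grad_y g(x, y^*(x)) = 0$ (implicit function theorem) gives $\grad y^*(x) = -\grad_{xy}^2 g(x,y^*(x))\,\bigl(\grad_{yy}^2 g(x,y^*(x))\bigr)^{-1}$, so $\|\grad y^*(x)\| \le l_{g,1}/\mu_g$. Since $y^*_\lambda \to y^*$ as $\lambda \to \infty$, this constant is dominated by $l_{*,0} = 1 + \max_\lambda l_{\lambda,0}$, yielding the deterministic pointwise bound $\|y^*(x_{k+1}) - y^*(x_k)\|^2 \le l_{*,0}^2 \|x_{k+1} - x_k\|^2$. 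Taking $\Exs[\,\cdot\mid\mathcal{F}_k]$ on both sides, it remains only to bound $\Exs[\|x_{k+1} - x_k\|^2 \mid \mathcal{F}_k]$.

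Next, from the update rule in Algorithm~\ref{algo:algo_name} we have $x_{k+1} - x_k = -\xi\alpha_k\, g_k^x$, where $g_k^x := h_{fx}^k + \lambda_k(h_{gxy}^k - h_{gxz}^k)$, and by construction $g_k^x$ is an unbiased estimator of $q_k^x$ defined in \eqref{eq:qk_def}. The key structural point I would emphasize is the filtration bookkeeping: conditioned on $\mathcal{F}_k$, the inner-loop outputs $y_{k+1}$ and $z_{k+1}$ are themselves random, so $q_k^x$ is \emph{not} $\mathcal{F}_k$-measurable. I therefore pass to the enlarged $\sigma$-algebra $\mathcal{F}_k' := \sigma(\mathcal{F}_k, y_{k+1}, z_{k+1})$, with respect to which $q_k^x$ is measurable and $\Exs[g_k^x \mid \mathcal{F}_k'] = q_k^x$, because the samples $\zeta_x^k, \phi_{xy}^k, \phi_{xz}^k$ driving the $x$-step are drawn fresh and are independent of the inner loop and of one another. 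Consequently the cross term vanishes in $\Exs[\|g_k^x\|^2 \mid \mathcal{F}_k'] = \|q_k^x\|^2 + \Exs[\|g_k^x - q_k^x\|^2 \mid \mathcal{F}_k']$, and the three independent zero-mean noise contributions add, so that by Assumption~\ref{assumption:gradient_variance}, $\Exs[\|g_k^x - q_k^x\|^2 \mid \mathcal{F}_k'] \le \sigma_f^2 + 2\lambda_k^2\sigma_g^2$.

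Finally I would invoke the tower property to return from $\mathcal{F}_k'$ to $\mathcal{F}_k$, multiply by $\xi^2\alpha_k^2$ and then by $l_{*,0}^2$, and substitute $\beta_k = \alpha_k\lambda_k$ so that $\alpha_k^2\lambda_k^2\sigma_g^2 = \beta_k^2\sigma_g^2$; this gives exactly the stated bound, with the harmless factor $2$ on the $\sigma_g^2$ term absorbed into the $O_{\texttt{P}}$ constants. I expect the only genuine subtlety to be the filtration argument of the middle paragraph — correctly identifying $q_k^x$ as the conditional mean given that $y_{k+1}, z_{k+1}$ are random, and verifying the mutual independence (and independence from the inner loop) of the three $x$-step noise sources so that both the bias--variance cross term and the noise cross terms drop. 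Everything else is a routine second-moment estimate.
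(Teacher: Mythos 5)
Your proof is correct and follows essentially the same route as the paper's (one-line) argument: Lipschitz continuity of $y^*$ with constant at most $l_{*,0}$ --- which the paper obtains as the $\lambda_1=\lambda_2\to\infty$ limit of Lemma~\ref{lemma:y_star_lagrangian_continuity} rather than via the implicit function theorem, an immaterial difference --- followed by the second-moment bound $\Exs[\|x_{k+1}-x_k\|^2\mid\mathcal{F}_k]\le \xi^2\alpha_k^2\Exs[\|q_k^x\|^2\mid\mathcal{F}_k]+\xi^2(\alpha_k^2\sigma_f^2+\beta_k^2\sigma_g^2)$. Your filtration bookkeeping through $\mathcal{F}_k'$, and your factor of $2$ on the $\sigma_g^2$ term coming from the two independent samples $\phi_{xy}^k,\phi_{xz}^k$, simply make explicit details that the paper's proof leaves implicit or silently absorbs into constants; the substance is identical.
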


\begin{lemma}
    \label{lemma:y_star_smoothness_bound}
    At every $k^{th}$ iteration, conditioned on $\mathcal{F}_k$, let $v_k$ be a random vector decided before updating $x_k$. Then for any $\eta_k > 0$, we have
    \begin{align*}
        \Exs[\vdot{v_k}{y^*(x_{k+1}) - y^*(x_k)} | \mathcal{F}_k ] &\le (\xi \alpha_k \eta_k + M \xi^2 l_{*,1}^2 \beta_k^2) \Exs[\|v_k\|^2 | \mathcal{F}_k] \\
        &\quad + \left(\frac{\xi \alpha_k l_{*,0}^2}{4\eta_k} + \frac{\xi^2 \alpha_k^2}{4}\right) \Exs[\|q_k^x\|^2 | \mathcal{F}_k] + \frac{\xi^2}{4}(\alpha_k^2 \sigma_f^2 + \beta_k^2 \sigma_g^2),
    \end{align*}
    where $M := \max\left(l_{f,0}^2 + \sigma_f^2, l_{g,0}^2 + \sigma_g^2\right)$.
\end{lemma}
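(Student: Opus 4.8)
The plan is to expand $y^*(x_{k+1})-y^*(x_k)$ to first order in the outer step $x_{k+1}-x_k$, use unbiasedness of the stochastic gradients to evaluate the linear part, and dispatch the linear and quadratic parts by Young's inequality. Throughout I condition on the $\sigma$-algebra $\mathcal G_k$ generated by $\mathcal F_k$ together with all inner-loop randomness producing $y_{k+1}$ and $z_{k+1}$; under $\mathcal G_k$ the quantities $v_k$, $q_k^x$ and the Jacobian $\grad y^*(x_k)$ are deterministic, and the only randomness left in $x_{k+1}-x_k = -\xi\alpha_k(h_{fx}^k + \lambda_k(h_{gxy}^k - h_{gxz}^k))$ is the three fresh gradient draws, so that $\Exs[x_{k+1}-x_k\mid\mathcal G_k] = -\xi\alpha_k q_k^x$ by Assumption~\ref{assumption:gradient_variance}. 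Since $y^*$ is $l_{*,0}$-Lipschitz and $l_{*,1}$-smooth (the $\lambda\to\infty$ limits in Lemmas~\ref{lemma:y_star_lagrangian_continuity} and~\ref{lemma:nice_y_star_lagrangian}; see Table~\ref{tab:constant_relations}), smoothness yields the exact identity
\begin{align*}
    y^*(x_{k+1})-y^*(x_k) = \grad y^*(x_k)^\top (x_{k+1}-x_k) + r_k, \qquad \|r_k\| \le \tfrac{l_{*,1}}{2}\|x_{k+1}-x_k\|^2 .
\end{align*}

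Pairing with $v_k$ and taking $\Exs[\cdot\mid\mathcal G_k]$, the linear term becomes $-\xi\alpha_k\langle \grad y^*(x_k) v_k, q_k^x\rangle$; as $\|\grad y^*(x_k) v_k\| \le l_{*,0}\|v_k\|$, Young's inequality with parameter $\eta_k$ gives $\xi\alpha_k\eta_k\|v_k\|^2 + \tfrac{\xi\alpha_k l_{*,0}^2}{4\eta_k}\|q_k^x\|^2$, which supplies the first $\|v_k\|^2$ term and the first half of the $\|q_k^x\|^2$ coefficient. For the remainder, Cauchy--Schwarz and the $\mathcal G_k$-measurability of $v_k$ give
\begin{align*}
    \Exs[\langle v_k, r_k\rangle\mid\mathcal G_k] \le \tfrac{l_{*,1}}{2}\|v_k\|\,\Exs[\|x_{k+1}-x_k\|^2\mid\mathcal G_k].
\end{align*}
I would then estimate $\Exs[\|x_{k+1}-x_k\|^2\mid\mathcal G_k]$ both exactly, via the variance decomposition $\xi^2\alpha_k^2\|q_k^x\|^2 + \xi^2\alpha_k^2\sigma_f^2 + \xi^2\beta_k^2\sigma_g^2$ (using $\beta_k=\alpha_k\lambda_k$ and independence of the three samples), and crudely, via the uniform second-moment bound $\Exs[\|x_{k+1}-x_k\|^2\mid\mathcal G_k] \le 4\xi^2\beta_k^2 M$. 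Substituting the crude bound for one of the two factors of $\Exs[\|x_{k+1}-x_k\|^2\mid\mathcal G_k]^{1/2}$ and applying Young's inequality to the resulting product $\|v_k\|\cdot\Exs[\|x_{k+1}-x_k\|^2\mid\mathcal G_k]^{1/2}$ produces exactly $M\xi^2 l_{*,1}^2\beta_k^2\|v_k\|^2 + \tfrac14\Exs[\|x_{k+1}-x_k\|^2\mid\mathcal G_k]$, and expanding the last quarter through the exact decomposition yields $\tfrac{\xi^2\alpha_k^2}{4}\|q_k^x\|^2 + \tfrac{\xi^2}{4}(\alpha_k^2\sigma_f^2+\beta_k^2\sigma_g^2)$.

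Adding the two contributions reproduces the claimed bound at the level of $\mathcal G_k$; applying the tower property $\Exs[\cdot\mid\mathcal F_k] = \Exs[\Exs[\cdot\mid\mathcal G_k]\mid\mathcal F_k]$ then replaces $\|v_k\|^2$ and $\|q_k^x\|^2$ by their $\mathcal F_k$-conditional expectations and closes the argument. The main obstacle is the treatment of the second-order remainder: I must split the product $\|v_k\|\,\|x_{k+1}-x_k\|^2$ into a clean $M\xi^2 l_{*,1}^2\beta_k^2\|v_k\|^2$ piece and a $\tfrac14\Exs[\|x_{k+1}-x_k\|^2\mid\mathcal G_k]$ piece with the stated constants, which hinges on calibrating the uniform moment bound $\Exs[\|x_{k+1}-x_k\|^2\mid\mathcal G_k]\le 4\xi^2\beta_k^2 M$ from the bounded-gradient Assumption~\ref{assumption:bounded_grad_x}, the variance bounds, and $\lambda_k\ge\lambda_0\ge 1$; all remaining steps are routine Young and Cauchy--Schwarz estimates.
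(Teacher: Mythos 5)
Your proposal is correct and follows essentially the same route as the paper's proof in Appendix~\ref{appendix:proof:y_star_smoothness_bound}: the same first-order expansion of $y^*$ with the $l_{*,1}$-smoothness remainder, the same Young step with parameter $\eta_k$ on the linear term via $l_{*,0}$-Lipschitzness, the same crude second-moment bound $\Exs[\|x_{k+1}-x_k\|^2]\le O(M\xi^2\beta_k^2)$ from Assumption~\ref{assumption:bounded_grad_x} on one copy of the remainder and the exact variance decomposition on the other, and the same conditioning device (your $\mathcal{G}_k$ is the paper's $\mathcal{F}_k'$). The only cosmetic difference is that the paper applies Young as $\|v_k\|\le\tfrac12\bigl(l_{*,1}\|v_k\|^2+1/l_{*,1}\bigr)$ before inserting the moment bounds, whereas you split $\Exs[\|x_{k+1}-x_k\|^2]^{1/2}\cdot\Exs[\|x_{k+1}-x_k\|^2]^{1/2}$ first --- the resulting arithmetic and constants are identical.
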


\begin{lemma}
    \label{lemma:y_star_lambda_vdot_bound}
    Under Assumptions \ref{assumption:nice_functions}-\ref{assumption:extra_smooth_f}, at every $k^{th}$ iteration, conditioned on $\mathcal{F}_k$, let $v_k$ be a random vector decided before updating $x_k$. Then for any $\eta_k > 0$, we have
    \begin{align*}
        \Exs[\vdot{v_k}{y_{\lambda_{k+1}}^*(x_{k+1}) - y_{\lambda_k}^*(x_k)} | \mathcal{F}_k ] &\le (\delta_k/\lambda_k + \xi \alpha_k \eta_k + M \xi^2 l_{\lambda_k,1}^2 \beta_k^2) \Exs[\|v_k\|^2 | \mathcal{F}_k] \\
        &+ \left(\frac{\xi \alpha_k l_{*,0}^2}{4\eta_k} + \frac{\xi^2 \alpha_k^2}{4}\right) \Exs[\|q_k^x\|^2 | \mathcal{F}_k] + \frac{\xi^2}{4} (\alpha_k^2 \sigma_f^2 + \beta_k^2 \sigma_g^2) + \frac{\delta_k l_{f,0}^2}{\lambda_k^3 \mu_g^2},
    \end{align*}
    where $M := \max\left(l_{f,0}^2 + \sigma_f^2, l_{g,0}^2 + \sigma_g^2\right)$.
\end{lemma}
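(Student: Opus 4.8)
The plan is to split the target increment into a piece driven purely by the change in the multiplier (holding $x$ fixed at $x_k$) and a piece driven purely by the change in $x$ (holding the multiplier fixed at $\lambda_{k+1}$):
\begin{align*}
y_{\lambda_{k+1}}^*(x_{k+1}) - y_{\lambda_k}^*(x_k) = \underbrace{\big(y_{\lambda_{k+1}}^*(x_k) - y_{\lambda_k}^*(x_k)\big)}_{(\mathrm{I})} + \underbrace{\big(y_{\lambda_{k+1}}^*(x_{k+1}) - y_{\lambda_{k+1}}^*(x_k)\big)}_{(\mathrm{II})},
\end{align*}
and bound $\Exs[\vdot{v_k}{(\mathrm{I})}\mid\mathcal{F}_k]$ and $\Exs[\vdot{v_k}{(\mathrm{II})}\mid\mathcal{F}_k]$ separately. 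Here it is essential that $v_k$, together with $x_k, \lambda_k, \lambda_{k+1}$, is determined before the fresh noise entering the $x_k$-update is drawn, so that conditioning over that noise acts only on the $x$-increment and leaves $v_k$ as a constant.

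For term $(\mathrm{I})$, the vector is fixed given $\mathcal{F}_k$. Since $\lambda_{k+1} = \lambda_k + \delta_k \ge \lambda_k \ge \lambda_0 \ge 2l_{f,1}/\mu_g$, Lemma~\ref{lemma:y_star_lagrangian_continuity} with $x_1 = x_2 = x_k$, $\lambda_1 = \lambda_k$, $\lambda_2 = \lambda_{k+1}$ gives $\|(\mathrm{I})\| \le \tfrac{2(\lambda_{k+1}-\lambda_k)}{\lambda_k\lambda_{k+1}}\tfrac{l_{f,0}}{\mu_g} \le \tfrac{2\delta_k}{\lambda_k^2}\tfrac{l_{f,0}}{\mu_g}$, using $\lambda_{k+1}\ge\lambda_k$. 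I would then apply Cauchy--Schwarz followed by Young's inequality with the weight tuned so that the $\|v_k\|^2$-coefficient is exactly $\delta_k/\lambda_k$:
\begin{align*}
\vdot{v_k}{(\mathrm{I})} \le \frac{\delta_k}{\lambda_k}\|v_k\|^2 + \frac{\lambda_k}{4\delta_k}\|(\mathrm{I})\|^2 \le \frac{\delta_k}{\lambda_k}\|v_k\|^2 + \frac{\delta_k l_{f,0}^2}{\lambda_k^3\mu_g^2},
\end{align*}
which reproduces exactly the two multiplier-dependent terms $(\delta_k/\lambda_k)\|v_k\|^2$ and $\delta_k l_{f,0}^2/(\lambda_k^3\mu_g^2)$ of the claim. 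This is the genuinely new content of the lemma.

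For term $(\mathrm{II})$ the multiplier is frozen at $\lambda_{k+1}$, so $x\mapsto y_{\lambda_{k+1}}^*(x)$ is a fixed smooth map and $(\mathrm{II})$ has the same structure as the increment $y^*(x_{k+1})-y^*(x_k)$ treated in Lemma~\ref{lemma:y_star_smoothness_bound}. I would replay that argument with $y^*$ replaced by $y_{\lambda_{k+1}}^*$: a first-order Taylor expansion around $x_k$ yields a linear term whose conditional mean is $-\xi\alpha_k\,\grad y_{\lambda_{k+1}}^*(x_k)\,q_k^x$ (since $\Exs[x_{k+1}-x_k\mid\cdot]=-\xi\alpha_k q_k^x$), handled by Young, with $\grad y_{\lambda_{k+1}}^*$ moved onto $q_k^x$ and bounded via $\|\grad y_{\lambda_{k+1}}^*\|\le l_{\lambda_{k+1},0}\le l_{*,0}$, to give $\xi\alpha_k\eta_k\|v_k\|^2 + \tfrac{\xi\alpha_k l_{*,0}^2}{4\eta_k}\|q_k^x\|^2$; and a quadratic remainder of size $\tfrac{l_{\lambda_{k+1},1}}{2}\|x_{k+1}-x_k\|^2$, which after an AM--GM split into a $\|v_k\|^2\|x_{k+1}-x_k\|^2$ part and a $\|x_{k+1}-x_k\|^2$ part, combined with $\Exs[\|x_{k+1}-x_k\|^2\mid\cdot]\lesssim \xi^2\alpha_k^2\|q_k^x\|^2 + \xi^2(\alpha_k^2\sigma_f^2+\beta_k^2\sigma_g^2)$, contributes the $M\xi^2 l_{\lambda_{k+1},1}^2\beta_k^2\|v_k\|^2$, $\tfrac{\xi^2\alpha_k^2}{4}\|q_k^x\|^2$, and $\tfrac{\xi^2}{4}(\alpha_k^2\sigma_f^2+\beta_k^2\sigma_g^2)$ terms. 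The one change from Lemma~\ref{lemma:y_star_smoothness_bound} is that the relevant smoothness constant is $l_{\lambda_{k+1},1}$ rather than $l_{*,1}$; since $l_{\lambda,1}\le 32(l_{g,2}+\lambda^{-1}l_{f,2})l_{g,1}^2/\mu_g^3$ is nonincreasing in $\lambda$ (Lemma~\ref{lemma:nice_y_star_lagrangian}) and $\lambda_{k+1}\ge\lambda_k$, I may upper bound $l_{\lambda_{k+1},1}\le l_{\lambda_k,1}$, matching the stated coefficient.

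Adding the bounds for $(\mathrm{I})$ and $(\mathrm{II})$ and invoking the tower property for $\Exs[\cdot\mid\mathcal{F}_k]$ yields the claim. I expect the main obstacle to be purely bookkeeping inside term $(\mathrm{II})$: tracking constants through the quadratic-remainder AM--GM split so that the $\|q_k^x\|^2$ and noise coefficients land at exactly $\tfrac14$, precisely as in Lemma~\ref{lemma:y_star_smoothness_bound}. The only conceptually new ingredients are the clean additive treatment of the multiplier increment $(\mathrm{I})$ via Lemma~\ref{lemma:y_star_lagrangian_continuity} and the monotonicity of $l_{\lambda,1}$ in $\lambda$.
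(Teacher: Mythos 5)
Your proposal is correct and follows essentially the same route as the paper: the paper's proof also splits the increment into a multiplier-change piece, bounded via Lemma~\ref{lemma:y_star_lagrangian_continuity} and Young's inequality with weight $\delta_k/\lambda_k$ to produce exactly the $(\delta_k/\lambda_k)\|v_k\|^2$ and $\delta_k l_{f,0}^2/(\lambda_k^3\mu_g^2)$ terms, and an $x$-change piece handled by replaying the Taylor-plus-smoothness argument of Lemma~\ref{lemma:y_star_smoothness_bound}. The only (immaterial) difference is the choice of intermediate point: the paper evaluates the multiplier change at $x_{k+1}$ and the $x$-increment at the frozen multiplier $\lambda_k$, so $l_{\lambda_k,1}$ appears directly without your monotonicity step $l_{\lambda_{k+1},1}\le l_{\lambda_k,1}$ --- which is in any case justified by the explicit bound $l_{\lambda,1}\le 32(l_{g,2}+\lambda^{-1}l_{f,2})l_{g,1}^2/\mu_g^3$ of Lemma~\ref{lemma:nice_y_star_lagrangian}.
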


\section{Main Results for Algorithm \ref{algo:algo_name}}
\label{appendix:main_proof_non_convex}

In this section, we prove our key estimate, Theorem \ref{theorem:general_nonconvex}. Our aim is to find the upper bound of  $\mathbb{V}_{k+1} - \mathbb{V}_k$ for the potential function $\mathbb{V}_k$ given in \eqref{eq:pot0}. For $x_k$ and $y_k$ given in Algorithm~\ref{algo:algo_name}, the following notations will be used:
\begin{align}
\label{eq:ikjk}
    \I_k := \|y_k - y_{\lambda,k}^*\|^2 \hbox{ and } \J_k := \|z_k - y_k^*\|^2
\end{align}
where $y_{\lambda,k}^* := y_{\lambda_k}^*(x_k)$, $y_k^* := y^*(x_k)$, and $x^* = \arg \min_x F(x)$. Recall that $y^*_{\lambda}$ and $y^*$ are given in \eqref{eq:yls} and \eqref{problem:bilevel}, respectively.
Using the above notation, the potential function given in \eqref{eq:pot0} can be rewritten as  \begin{align}
\label{eq:pot}
    \mathbb{V}_k := (F(x_k) - F(x^*)) + \lambda_k l_{g,1} \I_k + \frac{\lambda_k l_{g,1}}{2} \J_k
\end{align}
for each $k \in \mathbb{N}$. In the following three subsections, we find the upper bound of $\mathbb{V}_{k+1} - \mathbb{V}_k$ in terms of $\I_k$ and $\J_k$. The proof of Theorem \ref{theorem:general_nonconvex} is given in Section~\ref{sec:pfgeneral}.

\subsection{Estimation of $F(x_{k+1}) - F(x_k)$}
\label{sec:estf}

The step size $\alpha_k$ is designed to satisfy
\begin{align}
    \textbf{(step-size rule):} \qquad \alpha_k \le \frac{1}{2\xi l_{F,1}}, \label{eq:step_cond_alpha_standard_F}
\end{align}
which is essential to obtain the negative term $- \frac{\xi \alpha_k}{4} \|q_k^x\|^2$ on the right hand side of \eqref{eq:fxfx}. This negativity plays an important role in the proof of Theorem \ref{theorem:general_nonconvex} in Section~\ref{sec:pfgeneral}.

On the other hand, we also impose
\begin{align}
    \textbf{(step-size rule):} \qquad \frac{\xi}{T} \le \frac{\mu_g}{96 l_{g,1}}
    \label{eq:step_cond_xi_beta}.
\end{align} The terms, $\|y_{k+1} - y_{\lambda, k}^*\|^2$ and $\|z_{k+1} - y_k^*\|^2$, in the upper bound \eqref{eq:fxfx} will be estimated in Lemma~\ref{lem:gen20} and Lemma~\ref{lem:z_intermediate_contraction}, respectively. 

\begin{proposition}
\label{prop:g1}
Under the step-size rules given in \eqref{eq:step_cond_alpha_standard_F}, and \eqref{eq:step_cond_xi_beta} and $\lambda_k \ge 2 l_{f,1} / \mu_g$, it holds that for each $k \in \mathbb{N}$
\begin{align}
\label{eq:fxfx}
    \Exs[ F(x_{k+1}) - F(x_k) |\mathcal{F}_k] &\le -\frac{\xi \alpha_k}{4} \left( 2 \|\grad F(x_k)\|^2 + \|q_k^x\|^2 \right)  + \frac{T \mu_g \alpha_k \lambda_k^2}{32} \left(
    2\|y_{k+1} - y_{\lambda, k}^*\|^2 + \|z_{k+1} - y_k^*\|^2 \right) \nonumber \\
    &\quad + \frac{\xi^2 l_{F,1}}{2} (\alpha_k^2 \sigma_f^2 + \beta_k^2 \sigma_g^2) + \frac{\xi \alpha_k}{2} \cdot 3 C_\lambda^2 \lambda_k^{-2},
\end{align}
where $q_k^x$ is given in \eqref{eq:qk_def}, and $C_\lambda := \frac{4 l_{f,0} l_{g,1}}{\mu_g^2} \left(l_{f,1} + \frac{2l_{f,0} l_{g,2}}{\mu_g}\right)$.
\end{proposition}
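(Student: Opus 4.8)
The plan is to start from the descent inequality for the $l_{F,1}$-smooth function $F$ (Lemma~\ref{lemma:outer_F_smooth}), namely $F(x_{k+1}) \le F(x_k) + \langle \grad F(x_k), x_{k+1}-x_k\rangle + \tfrac{l_{F,1}}{2}\|x_{k+1}-x_k\|^2$, and then take the conditional expectation $\Exs[\cdot\mid\mathcal{F}_k]$. The $x$-update in Algorithm~\ref{algo:algo_name} reads $x_{k+1}-x_k = -\xi\alpha_k\hat q_k^x$, where $\hat q_k^x := h_{fx}^k + \lambda_k(h_{gxy}^k - h_{gxz}^k)$ is the stochastic counterpart of $q_k^x$ in \eqref{eq:qk_def}. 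The first step is to condition on the finer $\sigma$-algebra generated by $\mathcal{F}_k$ together with the inner-loop iterates $y_{k+1},z_{k+1}$: since the oracle noises $\zeta_x^k,\phi_{xy}^k,\phi_{xz}^k$ entering the $x$-step are fresh and unbiased given these iterates, $\Exs[\hat q_k^x\mid \mathcal{F}_k, y_{k+1}, z_{k+1}] = q_k^x$, so the tower property turns the linear term into $-\xi\alpha_k\Exs[\langle \grad F(x_k), q_k^x\rangle\mid\mathcal{F}_k]$. The same conditioning splits $\Exs[\|\hat q_k^x\|^2\mid\mathcal{F}_k]$ into $\Exs[\|q_k^x\|^2\mid\mathcal{F}_k]$ plus a variance contribution of order $\sigma_f^2 + \lambda_k^2\sigma_g^2$ (by independence of the three oracles); multiplying by $\tfrac{l_{F,1}}{2}\xi^2\alpha_k^2$ and using $\beta_k = \alpha_k\lambda_k$ yields the noise term $\tfrac{\xi^2 l_{F,1}}{2}(\alpha_k^2\sigma_f^2 + \beta_k^2\sigma_g^2)$.

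Next I would apply the polarization identity $-\langle a,b\rangle = \tfrac12\|a-b\|^2 - \tfrac12\|a\|^2 - \tfrac12\|b\|^2$ with $a=\grad F(x_k)$ and $b=q_k^x$, producing $-\tfrac{\xi\alpha_k}{2}\|\grad F(x_k)\|^2 - \tfrac{\xi\alpha_k}{2}\Exs[\|q_k^x\|^2\mid\mathcal{F}_k] + \tfrac{\xi\alpha_k}{2}\Exs[\|\grad F(x_k)-q_k^x\|^2\mid\mathcal{F}_k]$. The quadratic smoothness term contributes $\tfrac{l_{F,1}}{2}\xi^2\alpha_k^2\Exs[\|q_k^x\|^2\mid\mathcal{F}_k]$, which the step-size rule \eqref{eq:step_cond_alpha_standard_F} (namely $\alpha_k \le 1/(2\xi l_{F,1})$) bounds by $\tfrac{\xi\alpha_k}{4}\Exs[\|q_k^x\|^2\mid\mathcal{F}_k]$; combining with the $-\tfrac{\xi\alpha_k}{2}\Exs[\|q_k^x\|^2\mid\mathcal{F}_k]$ above leaves the net negative term $-\tfrac{\xi\alpha_k}{4}(2\|\grad F(x_k)\|^2 + \Exs[\|q_k^x\|^2\mid\mathcal{F}_k])$ appearing in \eqref{eq:fxfx}.

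The main work is to control the bias $\Exs[\|\grad F(x_k)-q_k^x\|^2\mid\mathcal{F}_k]$. I would insert the intermediate quantity $\grad\mL_{\lambda_k}^*(x_k)$ given by Lemma~\ref{lemma:l_star_lambda_approximate} and write $\grad F(x_k)-q_k^x = (\grad F(x_k)-\grad\mL_{\lambda_k}^*(x_k)) + (\grad\mL_{\lambda_k}^*(x_k)-q_k^x)$. The first piece is at most $C_\lambda/\lambda_k$ by Lemma~\ref{lemma:l_star_lambda_approximate}. For the second, note that $\grad\mL_{\lambda_k}^*(x_k)$ is exactly $q_k^x$ with $y_{k+1}$ replaced by $y_{\lambda,k}^*$ and $z_{k+1}$ by $y_k^*$, so Lipschitzness of $\grad f$ and $\grad g$ gives a bound of the form $(l_{f,1}+\lambda_k l_{g,1})\|y_{k+1}-y_{\lambda,k}^*\| + \lambda_k l_{g,1}\|z_{k+1}-y_k^*\|$; using $\lambda_k \ge 2l_{f,1}/\mu_g$ (hence $l_{f,1}\le \tfrac12\lambda_k\mu_g \le \tfrac12\lambda_k l_{g,1}$) this reduces to a constant multiple of $\lambda_k l_{g,1}(\|y_{k+1}-y_{\lambda,k}^*\| + \|z_{k+1}-y_k^*\|)$. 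Squaring via $\|a+b+c\|^2\le 3(\|a\|^2+\|b\|^2+\|c\|^2)$ produces the $3C_\lambda^2\lambda_k^{-2}$ term together with terms proportional to $\lambda_k^2 l_{g,1}^2\|y_{k+1}-y_{\lambda,k}^*\|^2$ and $\lambda_k^2 l_{g,1}^2\|z_{k+1}-y_k^*\|^2$; the step-size rule \eqref{eq:step_cond_xi_beta} calibrating $\xi/T$ against $\mu_g/l_{g,1}$ then folds these tracking errors into the stated coefficient $\tfrac{T\mu_g\alpha_k\lambda_k^2}{32}$ multiplying $2\|y_{k+1}-y_{\lambda,k}^*\|^2 + \|z_{k+1}-y_k^*\|^2$.

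I expect the bias decomposition to be the crux: both the correct use of the intermediate Lagrangian minimizer $y_{\lambda,k}^*$ (whose existence requires $\lambda_k \ge 2l_{f,1}/\mu_g$) and the careful martingale conditioning that justifies replacing the stochastic $x$-step by $q_k^x$ while isolating its variance. The remaining steps are routine applications of Young's inequality and the prescribed step-size rules; the tracking-error terms are deliberately left on the right-hand side of \eqref{eq:fxfx}, to be absorbed later by the inner-loop contraction estimates in Lemma~\ref{lem:gen20} and Lemma~\ref{lem:z_intermediate_contraction}.
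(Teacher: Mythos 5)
Your proposal matches the paper's proof essentially step for step: the $l_{F,1}$-smoothness descent inequality, the polarization identity $-\langle a,b\rangle=\tfrac12\|a-b\|^2-\tfrac12\|a\|^2-\tfrac12\|b\|^2$, absorption of the quadratic term via $\alpha_k\le 1/(2\xi l_{F,1})$, the bias split through $\grad\mL_{\lambda_k}^*(x_k)$ using Lemma~\ref{lemma:l_star_lambda_approximate} together with Lipschitzness of $\grad_x f,\grad_x g$ and $l_{f,1}\le l_{g,1}\lambda_k$, the $(a+b+c)^2\le 3(a^2+b^2+c^2)$ squaring, and the $\xi/T$ rule \eqref{eq:step_cond_xi_beta} to fold the tracking errors into the $\tfrac{T\mu_g\alpha_k\lambda_k^2}{32}$ coefficients. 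If anything, your explicit tower-property conditioning on $(\mathcal{F}_k,y_{k+1},z_{k+1})$ to justify $\Exs[\hat q_k^x\mid\cdot]=q_k^x$ is a more careful rendering of a step the paper states without comment.
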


\begin{proof}
From the smoothness of $F$, 
\begin{align*}
    \Exs[ F(x_{k+1}) - F(x_k) |\mathcal{F}_k] &\le \Exs[\vdot{\grad F(x_k)}{x_{k+1} - x_k} + \frac{l_{F,1}}{2} \|x_{k+1} - x_k\|^2|\mathcal{F}_k].
\end{align*}
As $q_k^x$ satisfies $\Exs[x_{k+1} - x_k|\mathcal{F}_k] = \alpha_k q_k^x$, 
\begin{align*}
    &\Exs[ F(x_{k+1}) - F(x_k) |\mathcal{F}_k] = -\xi \alpha_k \vdot{\grad_x F(x_k)}{q_k^x} + \frac{l_{F,1}}{2} \Exs[\|x_{k+1} - x_k\|^2  |\mathcal{F}_k] \\
    &= -\frac{\xi \alpha_k}{2} (\|\grad F(x_k)\|^2 + \|q_k^x\|^2 - \| \grad F(x_k) - q_k^x \|^2) + \frac{l_{F,1}}{2} \Exs[ \|x_{k+1} - x_k\|^2 |\mathcal{F}_k].
\end{align*}
Note that
\begin{align*}
    \Exs[\|x_{k+1} - x_k\|^2] &\le \xi^2 \alpha_k^2 \Exs[\|q_k^x\|^2 + \xi^2 (\alpha_k^2 \sigma_f^2 + \beta_k^2 \sigma_g^2),
\end{align*}
and thus with \eqref{eq:step_cond_alpha_standard_F} we have
\begin{align*}
    &\Exs[ F(x_{k+1}) - F(x_k) |\mathcal{F}_k] \le -\frac{\xi \alpha_k}{2} \|\grad F(x_k)\|^2 - \frac{\xi \alpha_k}{4} \|q_k^x\|^2 \\
    &\quad + \frac{\xi \alpha_k}{2} \|\grad F(x_k) - q_k^x\|^2 + \frac{\xi^2 l_{F,1}}{2} (\alpha_k^2 \sigma_f^2 + \beta_k^2 \sigma_g^2).
\end{align*}

Next, we bound $\|\grad F(x_k) - q_k^x\|$ using the triangle inequality:
\begin{align*}
    \|q_k^x - \grad F(x_k)\| &= \|q_k^x - \grad \mL_{\lambda_k}^*(x_k) + \grad \mL_{\lambda_k}^*(x_k) - \grad F(x_k)\|  \\
    &\le \|\grad_x f(x_k, y_{k+1}) - \grad_x f(x_k, y_{\lambda,k}^*)\| + \lambda_k \|\grad_x g(x_k, y_{k+1}) - \grad_x g(x_k, y_{\lambda, k}^*)\| \\
    &\quad + \lambda_k \|\grad_x g(x_k, z_{k+1}) - \grad_x g(x_k, y_k^*)\| + \| \grad \mL_{\lambda_k}^*(x_k) - \grad F(x_k)\|.
\end{align*}
From Lemma~\ref{lemma:l_star_lambda_approximate}, the term $\| \grad \mL_{\lambda_k}^*(x_k) - \grad F(x_k)\|$ is bounded by $C_\lambda / \lambda_k$. Combining with the regularity of $f$ and $g$ yields the following:
\begin{align}
\label{eq:fxfx1}
    \|q_k^x - \grad F(x_k)\| \le 2 l_{g,1} \lambda_k \|y_{k+1} - y_{\lambda,k}^*\| + l_{g,1} \lambda_k \|z_{k+1} - y_{k}^*\| + C_\lambda / \lambda_k.
\end{align}
Note that $\lambda_k \ge 2 l_{f,1} / \mu_g$, and thus $l_{f,1} < l_{g,1} \lambda_k$.

Finally, from Cauchy-Schwartz inequality $(a+b+c)^2 \le 3 (a^2 + b^2 + c^2)$, we get
\begin{align}
    &\Exs[ F(x_{k+1}) - F(x_k) |\mathcal{F}_k] \le -\frac{\xi \alpha_k}{2} \|\grad F(x_k)\|^2 - \frac{\xi \alpha_k}{4} \|q_k^x\|^2 \\
    &\quad + \frac{\xi \alpha_k}{2} \cdot 3 C_\lambda^2 \lambda_k^{-2} + 3 \xi \alpha_k l_{g,1}  \lambda_k^2 \|z_{k+1} - y_k^*\|^2 + 6 \xi  \alpha_k l_{g,1} \lambda_k^2 \|y_{k+1} - y_{\lambda, k}^*\|^2  + \frac{\xi^2 l_{F,1}}{2} (\alpha_k^2 \sigma_f^2 + \beta_k^2 \sigma_g^2). \nonumber
\end{align}
The step-size condition \eqref{eq:step_cond_xi_beta} concludes our claim.
\end{proof}



\subsection{Descent Lemma for $y_k$ towards $y_{\lambda,k}^*$}

In this section, the upper bounds of $\I_{k+1}$ and $\|y_{k+1} - y_{\lambda, k}^*\|$ are provided, respectively, in Lemma~\ref{lem:gen2} and Lemma~\ref{lem:gen20}. The following rule is required to ensure that $\|y_{k+1} - y_{\lambda,k+1}\|^2$ contracts:
\begin{align}
    \textbf{(step-size rule):} \qquad & \frac{\delta_k}{\lambda_k} \le \frac{T \beta_k \mu_g}{32}, \hbox{ and } 2\xi^2 Ml_{*,1}^2 \beta_k^2 < T \beta_k \mu_g / 16. \label{eq:step_cond_beta_y_lambda}
\end{align}
The first condition holds directly from \eqref{eq:step_size_theoremb}, and the second condition holds since $\beta_k \le \frac{1}{4T\mu_g}$ and also
\begin{align*}
    \frac{\xi^2}{T^2} \le \frac{\mu_g^2}{8} (M l_{*,l}^2)^{-1},
\end{align*}
which also holds by \eqref{eq:step_size_theoremb} with sufficiently small $c_\xi$.



\begin{lemma}
\label{lem:gen2}
    Under the step-size rule  \eqref{eq:step_cond_beta_y_lambda}, it holds that for each $k \in \mathbb{N}$
    \begin{align}
    \Exs[\I_{k+1}| \mathcal{F}_k] &\le \left(1 + T \beta_k \mu_g / 4 \right) \Exs[\|y_{k+1} - y_{\lambda, k}^*\|^2 | \mathcal{F}_k] \nonumber\\
    &\quad + O\left(\frac{\xi^2 l_{*,0}^2 \alpha_k^2 }{\mu_gT \beta_k}\right) \Exs[\|q_k^x\|^2 | \mathcal{F}_k] + O\left( \frac{\delta_k}{\lambda_k^3} \frac{l_{f,0}^2}{\mu_g^2} \right) + O(\xi^2 l_{*,0}^2) \cdot (\alpha_k^2 \sigma_f^2 + \beta_k^2 \sigma_g^2).  \label{eq:y_star_lambda_contraction}
\end{align}
where $\I_k$ and $q_k^x$ are given in \eqref{eq:ikjk} and \eqref{eq:qk_def}, respectively.
\end{lemma}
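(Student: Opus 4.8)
The plan is to decompose the quantity $\I_{k+1} = \|y_{k+1} - y_{\lambda,k+1}^*\|^2$ by inserting the intermediate target $y_{\lambda,k}^*$, writing $y_{k+1} - y_{\lambda,k+1}^* = (y_{k+1} - y_{\lambda,k}^*) + (y_{\lambda,k}^* - y_{\lambda,k+1}^*)$. The first piece is the error of $y_{k+1}$ against the \emph{current} target, which is exactly the quantity we will control separately in Lemma~\ref{lem:gen20} via the $T$-step contraction; it is therefore left intact in the statement as $\Exs[\|y_{k+1} - y_{\lambda,k}^*\|^2|\mathcal{F}_k]$. The second piece is the drift of the target caused by the update of $x_k$ (moving $x_k \to x_{k+1}$) and the increment $\lambda_k \to \lambda_{k+1} = \lambda_k + \delta_k$. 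The natural tool for the cross term arising from this decomposition is Lemma~\ref{lemma:y_star_lambda_vdot_bound}, which bounds $\Exs[\langle v_k, y_{\lambda_{k+1}}^*(x_{k+1}) - y_{\lambda_k}^*(x_k)\rangle | \mathcal{F}_k]$ for a vector $v_k$ measurable before the $x$-update.

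First I would expand the square as $\I_{k+1} = \|y_{k+1} - y_{\lambda,k}^*\|^2 + 2\langle y_{k+1} - y_{\lambda,k}^*,\, y_{\lambda,k}^* - y_{\lambda,k+1}^*\rangle + \|y_{\lambda,k}^* - y_{\lambda,k+1}^*\|^2$ and take conditional expectations. For the cross term I would set $v_k := y_{k+1} - y_{\lambda,k}^*$; since $y_{k+1} = y_{k,T}$ is determined by the inner loop before $x_{k+1}$ is formed (the $x$-update in Algorithm~\ref{algo:algo_name} uses $y_{k+1}$ already), $v_k$ is indeed decided before updating $x_k$, so Lemma~\ref{lemma:y_star_lambda_vdot_bound} applies with the sign flipped. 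This yields a term $(\delta_k/\lambda_k + \xi\alpha_k\eta_k + M\xi^2 l_{\lambda_k,1}^2\beta_k^2)\Exs[\|v_k\|^2]$ plus the $q_k^x$ term with coefficient $\tfrac{\xi\alpha_k l_{*,0}^2}{4\eta_k} + \tfrac{\xi^2\alpha_k^2}{4}$, the noise terms $\tfrac{\xi^2}{4}(\alpha_k^2\sigma_f^2 + \beta_k^2\sigma_g^2)$, and the bias term $\tfrac{\delta_k l_{f,0}^2}{\lambda_k^3\mu_g^2}$. The pure target-drift square $\|y_{\lambda,k}^* - y_{\lambda,k+1}^*\|^2$ I would bound directly through Lemma~\ref{lemma:y_star_lagrangian_continuity}, giving a $(\delta_k/\lambda_k)^2$ contribution absorbable into the stated $\delta_k/\lambda_k^3$ bias term and an $l_{\lambda,0}^2\|x_{k+1}-x_k\|^2$ contribution that in expectation is $O(\xi^2 l_{*,0}^2)(\alpha_k^2\|q_k^x\|^2 + \alpha_k^2\sigma_f^2 + \beta_k^2\sigma_g^2)$.

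The decisive step is choosing the free parameter $\eta_k$ to turn the coefficient $(1 + 2(\delta_k/\lambda_k + \xi\alpha_k\eta_k + M\xi^2 l_{\lambda_k,1}^2\beta_k^2))$ multiplying $\|v_k\|^2$ into the clean factor $1 + T\beta_k\mu_g/4$ claimed in the lemma. I would pick $\eta_k \asymp \beta_k/\alpha_k = \lambda_k$ (up to the right constant), so that $\xi\alpha_k\eta_k \asymp \xi\beta_k$; combined with the step-size rule $\xi/T = O(1)$ this makes $2\xi\alpha_k\eta_k \le T\beta_k\mu_g/16$, while the first step-size condition in \eqref{eq:step_cond_beta_y_lambda} handles $2\delta_k/\lambda_k \le T\beta_k\mu_g/16$ and the second condition handles $2M\xi^2 l_{*,1}^2\beta_k^2 < T\beta_k\mu_g/8$. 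This same choice $\eta_k \asymp \lambda_k$ forces the $q_k^x$ coefficient $\tfrac{\xi\alpha_k l_{*,0}^2}{4\eta_k}$ to scale like $\tfrac{\xi\alpha_k l_{*,0}^2}{\lambda_k} \asymp \tfrac{\xi^2 l_{*,0}^2\alpha_k^2}{\mu_g T\beta_k}$ after using $\beta_k = \alpha_k\lambda_k$ and $\xi/T = O(\mu_g)$, matching the stated $q_k^x$ term.

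The main obstacle I anticipate is the bookkeeping that keeps every coefficient consistent through these substitutions: the interplay $\beta_k = \alpha_k\lambda_k$, the identification $\eta_k \asymp \lambda_k$, and the step-size budget $\xi/T = O(\mu_g)$ must simultaneously collapse three separate contributions (from Lemma~\ref{lemma:y_star_lambda_vdot_bound}'s $\eta_k$ term, from the $\|x_{k+1}-x_k\|^2$ expansion, and from the $(\delta_k/\lambda_k)^2$ drift) into exactly the four terms on the right-hand side of \eqref{eq:y_star_lambda_contraction}, with the contraction constant landing at precisely $T\beta_k\mu_g/4$ rather than some larger multiple. Getting the absolute constants inside each $O(\cdot)$ to be consistent with the looser-than-needed inequalities in \eqref{eq:step_cond_beta_y_lambda} — which were deliberately stated with slack (factors of $16$, $32$) — is where the argument is delicate but not conceptually hard; the genuinely new structural observation is simply that $v_k = y_{k+1} - y_{\lambda,k}^*$ is the correct measurable vector to feed into Lemma~\ref{lemma:y_star_lambda_vdot_bound}.
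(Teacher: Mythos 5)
Your proposal is correct and follows the paper's own proof essentially verbatim: the same three-term expansion of $\I_{k+1}$ around the intermediate target $y_{\lambda,k}^*$, Lemma~\ref{lemma:y_star_lagrangian_continuity} for the squared drift term, and Lemma~\ref{lemma:y_star_lambda_vdot_bound} applied with $v_k = y_{k+1} - y_{\lambda,k}^*$ for the cross term, with the step-size rule \eqref{eq:step_cond_beta_y_lambda} absorbing the three contributions into the factor $1 + T\beta_k\mu_g/4$. The one point to fix in your bookkeeping: the ``right constant'' in $\eta_k \asymp \lambda_k$ must be $T\mu_g/(16\xi)$ --- the paper takes $\eta_k = T\mu_g\lambda_k/(16\xi)$ --- so that the $q_k^x$ coefficient $\frac{\xi\alpha_k l_{*,0}^2}{4\eta_k}$ equals $O\bigl(\xi^2 l_{*,0}^2\alpha_k^2/(\mu_g T\beta_k)\bigr)$ directly; your appeal to $\xi/T = O(\mu_g)$ runs in the wrong direction for converting $\xi\alpha_k^2/\beta_k$ into $\xi^2\alpha_k^2/(\mu_g T\beta_k)$, and with an absolute-constant choice $\eta_k \asymp \lambda_k$ the resulting coefficient would be too large to be cancelled by $-\frac{\xi\alpha_k}{4}\|q_k^x\|^2$ downstream in the proof of Theorem~\ref{theorem:general_nonconvex}.
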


\begin{proof}
We can start from 
\begin{align*}
    \|y_{k+1} - y_{\lambda, k+1}^*\|^2 = \underbrace{\|y_{k+1} - y_{\lambda, k}^*\|^2}_{(i)} + \underbrace{\|y_{\lambda, k+1}^* - y_{\lambda, k}^*\|^2}_{(ii)} - \underbrace{2\vdot{y_{k+1} - y_{\lambda, k}^*}{y_{\lambda, k+1}^* - y_{\lambda, k}^*}}_{(iii)}. 
\end{align*}
The upper bound of $(i)$ is given in Lemma~\ref{lem:gen20} below. To bound $(ii)$, we invoke Lemma \ref{lemma:y_star_lagrangian_continuity} to get 
\begin{align*}
    (ii): \Exs[\|y_{\lambda, k+1}^* - y_{\lambda, k}^*\|^2|\mathcal{F}_k] &\le \frac{4 \delta_k^2}{\lambda_k^2 \lambda_{k+1}^2} \frac{l_{f,0}^2}{\mu_g^2} + l_{*,0}^2 \Exs[\|x_{k+1} - x_k\|^2 | \mathcal{F}_k] \\
    &\le \frac{4 \delta_k^2}{\lambda_k^4} \frac{l_{f,0}^2}{\mu_g^2} + \xi^2 l_{*,0}^2(\alpha_k^2 \Exs[\|q_k^x\|^2] + \alpha_k^2 \sigma_f^2 + \beta_k^2 \sigma_f^2).
\end{align*}

For $(iii)$, recall the smoothness of $y_{\lambda}^* (x)$ in Lemma \ref{lemma:nice_y_star_lagrangian}, and thus Lemma \ref{lemma:y_star_lambda_vdot_bound}. By setting $v = y_{k+1} - y_{\lambda, k}^*$ and $\eta_k = T \mu_g \lambda_k / (16 \xi)$, and get
\begin{align*}
    (iii)&\le (2\delta_k / \lambda_k + T \beta_k \mu_g / 8 + 2 M \xi^2 l_{*,1}^2 \beta_k^2) \Exs[\|y_{k+1}-y_{\lambda,k}^*\|^2 |\mathcal{F}_k] \nonumber \\
    &\quad + \xi^2 \left(\frac{\alpha_k^2}{2} + \frac{8 \alpha_k^2 l_{*,0}^2}{\mu_g T \beta_k} \right) \|q_k^x\|^2 + \frac{\xi^2}{2} (\alpha_k^2 \sigma_f^2 + \beta_k^2 \sigma_g^2) + \frac{2 \delta_k}{\lambda_k^3} \frac{l_{f,0}^2}{\mu_g^3}.    
\end{align*}

We sum up the $(i), (ii), (iii)$ to conclude 
\begin{align}
    \Exs[\I_{k+1}| \mathcal{F}_k] &\le \left(1 + 2\delta_k/\lambda_k +  T \beta_k \mu_g / 8 + 2M\xi^2 l_{*,1}^2 \beta_k^2 \right) \Exs[\|y_{k+1} - y_{\lambda, k}^*\|^2] \nonumber\\
    &\quad + O\left(\frac{\xi^2 l_{*,0}^2 \alpha_k^2 }{\mu_gT \beta_k}\right) \|q_k^x\|^2  + O\left( \frac{\delta_k}{\lambda_k^3} \frac{l_{f,0}^2}{\mu_g^2} \right) + O(\xi^2 l_{*,0}^2) \cdot (\alpha_k^2 \sigma_f^2 + \beta_k^2 \sigma_g^2).  
\end{align}
Lastly, the step-size rule   \eqref{eq:step_cond_beta_y_lambda} yields our conclusion.
\end{proof}

Next, we note that $\alpha_k$ and $\beta_k$ are chosen to satisfy  
\begin{align}
    \textbf{(step size rules):} \qquad \alpha_k \le \frac{1}{8 l_{f,1}} \hbox{ and } \beta_k \le \frac{1}{8 l_{g,1}}, \label{eq:step_cond_alpha_beta_standard}
\end{align}
Note that $\beta_k \le \frac{1}{8l_{g,1}}$ is given from the step-size condition \eqref{eq:step_size_theorema}, and $\alpha_k \le \frac{1}{8 l_{g,1} \lambda_k} \le \frac{1}{8 l_{f,1}}$ since $\lambda_k \ge l_{f,1} / \mu_g$.

\begin{lemma}
\label{lem:gen20}
Under the step-size rule given in \eqref{eq:step_cond_alpha_beta_standard}, it holds that for each $k \in \mathbb{N}$
\begin{align}
    \Exs[\|y_{k+1} - y_{\lambda,k}^*\|^2 | \mathcal{F}_k] &\le (1 - 3 T \mu_g \beta_k / 4) \I_k + T (\alpha_k^2 \sigma_f^2 + \beta_k^2 \sigma_g^2). \label{eq:y_star_lambda_intermediate_contraction}
\end{align}
\end{lemma}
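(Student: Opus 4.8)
The plan is to prove the contraction by analyzing a single inner iteration of the $y$-update and then unrolling the recursion $T$ times. Write $\phi := \mL_{\lambda_k}(x_k, \cdot)$, which by the remark preceding \eqref{eq:yls} is $\mu := \lambda_k \mu_g/2$-strongly convex in $y$ and, by Assumption~\ref{assumption:nice_functions}, $L := l_{f,1} + \lambda_k l_{g,1}$-smooth in $y$, with unique minimizer $y_{\lambda,k}^*$ satisfying $\grad \phi(y_{\lambda,k}^*) = 0$. The inner step $y_{k,t+1} = y_{k,t} - \alpha_k(h_{fy}^{k,t} + \lambda_k h_{gy}^{k,t})$ has conditional-mean direction $q_{k,t}^y = \grad\phi(y_{k,t})$ from \eqref{eq:qk_def}, and its noise $n_{k,t} := (h_{fy}^{k,t} - \grad_y f) + \lambda_k(h_{gy}^{k,t} - \grad_y g)$ is zero-mean with $\Exs[\|n_{k,t}\|^2 \mid \mathcal{F}_{k,t}] \le \sigma_f^2 + \lambda_k^2 \sigma_g^2$ (using that $\zeta_y^{k,t}, \phi_y^{k,t}$ are independent and unbiased, Assumption~\ref{assumption:gradient_variance}), so that $\alpha_k^2 \Exs[\|n_{k,t}\|^2] \le \alpha_k^2 \sigma_f^2 + \beta_k^2 \sigma_g^2$.

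The first step is the single-iteration descent bound. Expanding $\Exs[\|y_{k,t+1} - y_{\lambda,k}^*\|^2 \mid \mathcal{F}_{k,t}]$, the cross term with $n_{k,t}$ vanishes by unbiasedness, leaving the deterministic gradient-step quantity $\|y_{k,t} - \alpha_k q_{k,t}^y - y_{\lambda,k}^*\|^2$ plus $\alpha_k^2 \Exs[\|n_{k,t}\|^2]$. For the deterministic part I would expand the square and invoke strong convexity, $\langle q_{k,t}^y, y_{k,t} - y_{\lambda,k}^*\rangle \ge \mu \|y_{k,t} - y_{\lambda,k}^*\|^2$, together with the co-coercivity consequence of $L$-smoothness and convexity, $\|q_{k,t}^y\|^2 \le L\langle q_{k,t}^y, y_{k,t} - y_{\lambda,k}^*\rangle$, to obtain
\[ \|y_{k,t} - \alpha_k q_{k,t}^y - y_{\lambda,k}^*\|^2 \le \left(1 - 2\alpha_k \mu \left(1 - \tfrac{\alpha_k L}{2}\right)\right)\|y_{k,t} - y_{\lambda,k}^*\|^2. \]
The step-size rule \eqref{eq:step_cond_alpha_beta_standard} gives $\alpha_k L = \alpha_k l_{f,1} + \beta_k l_{g,1} \le \tfrac14$, hence $1 - \alpha_k L/2 \ge \tfrac78$, while $2\alpha_k \mu = \mu_g \beta_k$; therefore the per-step contraction factor is at least $\tfrac78 \mu_g \beta_k$, giving $\Exs[\|y_{k,t+1} - y_{\lambda,k}^*\|^2 \mid \mathcal{F}_{k,t}] \le (1 - \tfrac78 \mu_g \beta_k)\|y_{k,t} - y_{\lambda,k}^*\|^2 + \alpha_k^2 \sigma_f^2 + \beta_k^2 \sigma_g^2$.

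The second step unrolls this recursion over $t = 0, \dots, T-1$ using the tower property together with $y_{k,0} = y_k$ and $y_{k,T} = y_{k+1}$. The homogeneous part becomes $(1 - \tfrac78 \mu_g \beta_k)^T \I_k$ (with $\I_k$ from \eqref{eq:ikjk}), and the noise terms form a geometric sum whose factors are all $\le 1$, so the accumulated noise is at most $T(\alpha_k^2 \sigma_f^2 + \beta_k^2 \sigma_g^2)$. The third step converts the geometric factor to the claimed linear one via $(1-y)^T \le 1 - Ty + \binom{T}{2} y^2$ with $y = \tfrac78 \mu_g \beta_k$: it suffices that $\binom{T}{2} y^2 \le \tfrac17 Ty$, i.e. $(T-1)y \le \tfrac27$. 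From the theorem's step-size condition \eqref{eq:step_size_theorema} we have $\beta_k \le \gamma_k \le 1/(4T\mu_g)$, so $Ty = \tfrac78 T\mu_g \beta_k \le \tfrac{7}{32} < \tfrac27$, which yields $(1 - \tfrac78 \mu_g \beta_k)^T \le 1 - \tfrac34 T\mu_g \beta_k$ and hence the stated bound \eqref{eq:y_star_lambda_intermediate_contraction}.

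The main obstacle is precisely that the crude one-step estimate $1 - \alpha_k \mu = 1 - \tfrac12 \mu_g \beta_k$ is inadequate: since $(1-z)^T \ge 1 - Tz$, a per-step rate of only $\tfrac12 \mu_g \beta_k$ could never produce the target factor $1 - \tfrac34 T\mu_g \beta_k$. One must extract the sharper per-step rate $\tfrac78 \mu_g \beta_k$ (strictly larger than $\tfrac34 \mu_g \beta_k$) by retaining the $-\alpha_k L/2$ correction from the co-coercivity inequality, and then control the second-order binomial remainder using the smallness $T\mu_g \beta_k \le \tfrac14$ available from the theorem's step-size design.
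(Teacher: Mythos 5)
Your proof is correct and follows the same skeleton as the paper's: a one-step expansion of $\Exs[\|y_{k,t+1}-y_{\lambda,k}^*\|^2\mid\mathcal{F}_k]$, with the cross term handled by coercivity of the $(\lambda_k\mu_g/2)$-strongly convex $\mL_{\lambda_k}(x_k,\cdot)$, the $\alpha_k^2\|q_{k,t}^y\|^2$ term absorbed via co-coercivity together with $\alpha_k(l_{f,1}+\lambda_k l_{g,1})=\alpha_k l_{f,1}+\beta_k l_{g,1}\le 1/4$, and the noise contributing $\alpha_k^2\sigma_f^2+\beta_k^2\sigma_g^2$ per step. Where you genuinely add something is the unrolling: the paper keeps the rounded per-step factor $(1-3\mu_g\beta_k/4)$ and simply asserts that "repeating this $T$ times" gives $(1-3T\mu_g\beta_k/4)$, which cannot follow literally since Bernoulli gives $(1-a)^T\ge 1-Ta$; your version retains the sharper per-step rate $\tfrac78\mu_g\beta_k$ and controls the binomial remainder using $T\mu_g\beta_k\le 1/4$, exactly recovering the stated constant $3/4$. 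This is the right repair, and it also makes explicit that the lemma implicitly relies on the theorem-level condition $\beta_k\le\gamma_k\le 1/(4T\mu_g)$ from \eqref{eq:step_size_theorema} — the hypothesis \eqref{eq:step_cond_alpha_beta_standard} alone does not suffice (for large $T$ the right-hand side of \eqref{eq:y_star_lambda_intermediate_contraction} can go negative while the left-hand side stays nonnegative), so your importing of that condition is not optional but necessary, and in that respect your write-up is more rigorous than the paper's.
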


\begin{proof}
Since $\Exs[y_{k}^{(t+1)} - y_k^{(t)} | \mathcal{F}_k] = - \alpha_k \grad_y q_k^{(t)} = - \alpha_k \grad_y \mL_{\lambda_k} (x_k, y_k^{(t)})$, we have
\begin{align*}
    \Exs[\|y_{k}^{(t+1)} - y_{\lambda, k}^*\|^2| \mathcal{F}_k] = \|y_{k}^{(t)} - y_{\lambda, k}^*\|^2 - 2\alpha_k \vdot{\grad_y q_k^{(t)}}{y_{k}^{(t)} - y_{\lambda, k}^*} + \Exs[\|y_{k}^{(t+1)} - y_k^{(t)}\|^2| \mathcal{F}_k]. 
\end{align*}
As we start from $\lambda_0 \ge 2\mu_f / \mu_g$, all $\mL_k$ is $(\lambda_k\mu_g/2)$-strongly convex in $y$, and we have
\begin{align*}
    \max \left( \frac{\lambda_k \mu_g }{2} \|y_k^{(t)} - y_{\lambda,k}^*\|^2 , \frac{1}{l_{f,1} + \lambda_k l_{g,1}} \|\grad_y q_k^{(t)}\|^2 \right) \le \vdot{\grad_y q_k^{(t)}}{y_k^{(t)} - y_{\lambda, k}^*}. 
\end{align*}
Using $\Exs[\|y_{k}^{(t+1)} - y_k^{(t)}\|^2|\mathcal{F}_k] \le \alpha_k^2 \|\grad_y q_k^{(t)} \|^2 + \alpha_k^2 \sigma_f^2 + \beta_k^2 \sigma_g^2$, have
\begin{align*}
    (i): \Exs[\|y_{k}^{(t+1)} - y_{\lambda, k}^*\|^2|\mathcal{F}_k] &\le (1 - 3 \mu_g \beta_k / 4) \|y_k^{(t)} - y_{\lambda, k}^*\|^2 + (\alpha_k^2 \sigma_f^2 + \beta_k^2 \sigma_g^2), 
\end{align*}
where we use $\alpha_k (l_{f,1} + \lambda_k l_{g,1}) = \alpha_k l_{f,1} + \beta_k l_{g,1} \le 1/4$ if we have \eqref{eq:step_cond_alpha_beta_standard}.
Repeating this $T$ times, we get \eqref{eq:y_star_lambda_intermediate_contraction}. Note that $y_{k+1} = y_{k}^{(T)}$ and $y_{k} = y_{k}^{(0)}$.
\end{proof}

\subsection{Descent Lemma for $z_k$ towards $y^*_{k}$}

Similar to the previous section, we provide the upper bound of $\J_{k+1}$ first and then estimate $\|z_{k+1}-y_{k}^*\|$ that appears in the upper bound. We work with the following step-size condition:
\begin{align}
    \textbf{(step-size rule):} \qquad 2 Ml_{*,1}^2 \xi^2 \beta_k^2 \le T \mu_g \gamma_k / 16, \label{eq:step_cond_beta_square_gamma}
\end{align}
This condition holds since $\beta_k \le \gamma_k$, and $\beta_k \le \frac{1}{4T\mu_g}$ and $\frac{\xi^2}{T^2} \le \frac{\mu_g^2}{8} (M l_{*,1}^2)^{-1}$.
    
\begin{lemma}
    \label{lemma:z_contraction} 
    Under the step-size rule \eqref{eq:step_cond_beta_square_gamma},    
    at each $k^{th}$ iteration, the following holds:
    \begin{align}
    \Exs[\J_{k+1}|\mathcal{F}_k] &\le \left(1 + \frac{3T\gamma_k\mu_g}{8} \right) \cdot \Exs[\|z_{k+1}-y_{k}^*\|^2|\mathcal{F}_k] \nonumber \\
    &\quad + O\left( \frac{\xi^2 \alpha_k^2  l_{*,0}^2}{T \mu_g \gamma_k} \right) \|q_k^x\|^2+  O\left(\xi^2 l_{*,0}^2 \right) (\alpha_k^2 \sigma_f^2 + \beta_k^2 \sigma_g^2) \label{eq:z_contraction}.
\end{align}
\end{lemma}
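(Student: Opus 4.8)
The plan is to mirror the structure of Lemma~\ref{lem:gen2}, since $\J_{k+1} = \|z_{k+1} - y^*(x_{k+1})\|^2$ plays the same role for the sequence $\{z_k\}$ that $\I_{k+1}$ played for $\{y_k\}$. First I would expand the squared norm around the intermediate target $y_k^* = y^*(x_k)$, writing
\begin{align*}
    \|z_{k+1} - y^*(x_{k+1})\|^2 = \underbrace{\|z_{k+1} - y_k^*\|^2}_{(i)} + \underbrace{\|y^*(x_{k+1}) - y_k^*\|^2}_{(ii)} - \underbrace{2\vdot{z_{k+1} - y_k^*}{y^*(x_{k+1}) - y_k^*}}_{(iii)}.
\end{align*}
Term $(i)$ is left as the leading $\Exs[\|z_{k+1} - y_k^*\|^2]$ on the right-hand side of \eqref{eq:z_contraction} (its further contraction toward $y_k^*$ under the $\gamma_k$-steps being deferred to the companion lemma analogous to Lemma~\ref{lem:gen20}, just as $(i)$ was deferred there). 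The main work is to bound the cross and quadratic terms that arise from the drift of the target $y^*(x_k)$ when $x_k$ is updated.

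For $(ii)$ I would invoke Lemma~\ref{lemma:y_star_contraction}, which gives $\Exs[\|y^*(x_{k+1}) - y^*(x_k)\|^2 | \mathcal{F}_k] \le \xi^2 l_{*,0}^2 (\alpha_k^2 \Exs[\|q_k^x\|^2] + \alpha_k^2 \sigma_f^2 + \beta_k^2 \sigma_g^2)$; this is cleaner than the $(ii)$ estimate in Lemma~\ref{lem:gen2} because the target $y^*(x)$ does not depend on $\lambda_k$, so there is no $\delta_k/\lambda_k^2$ drift term. For the cross term $(iii)$ I would apply Lemma~\ref{lemma:y_star_smoothness_bound} with the vector $v_k = z_{k+1} - y_k^*$ (which is decided before the update to $x_k$) and a judiciously chosen free parameter $\eta_k$. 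To produce the stated coefficient $1 + \tfrac{3T\gamma_k\mu_g}{8}$ on $(i)$, I expect to set $\eta_k = T\mu_g \gamma_k / (16\xi)$ in parallel with the choice $\eta_k = T\mu_g\lambda_k/(16\xi)$ used in Lemma~\ref{lem:gen2}; this makes the $\xi\alpha_k\eta_k$ contribution equal to $T\gamma_k\mu_g/16$, while the $M\xi^2 l_{*,1}^2 \beta_k^2$ contribution is absorbed into another $T\gamma_k\mu_g/16$ precisely by the step-size rule~\eqref{eq:step_cond_beta_square_gamma}. The reciprocal factor $1/(4\eta_k)$ then generates the $O(\xi^2\alpha_k^2 l_{*,0}^2 / (T\mu_g\gamma_k))$ coefficient on $\|q_k^x\|^2$, and the residual $\tfrac{\xi^2}{4}(\alpha_k^2\sigma_f^2 + \beta_k^2\sigma_g^2)$ merges with the noise terms from $(ii)$.

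Collecting the three pieces, the $(ii)$ and $(iii)$ contributions combine to inflate the coefficient of $(i)$ from $1$ to $1 + 2\cdot(T\gamma_k\mu_g/16) = 1 + T\gamma_k\mu_g/8$; together with the implicit slack that $\beta_k \le \gamma_k$ provides one obtains the stated $1 + \tfrac{3T\gamma_k\mu_g}{8}$ bound with room to spare. The $\|q_k^x\|^2$ coefficients from $(ii)$ (namely $\xi^2 l_{*,0}^2\alpha_k^2$, which is lower order) and from $(iii)$ add to the claimed $O(\xi^2\alpha_k^2 l_{*,0}^2/(T\mu_g\gamma_k))$, and the variance terms collect into $O(\xi^2 l_{*,0}^2)(\alpha_k^2\sigma_f^2 + \beta_k^2\sigma_g^2)$. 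The main obstacle I anticipate is bookkeeping: ensuring that the choice of $\eta_k$ simultaneously yields the desired $(i)$-coefficient and keeps the $1/\eta_k$ blow-up in the $\|q_k^x\|^2$ term matching the stated order, while verifying that step-size rule~\eqref{eq:step_cond_beta_square_gamma} is exactly what is needed to swallow the smoothness term $M\xi^2 l_{*,1}^2\beta_k^2$ into the $\gamma_k$-scaled budget. Since this lemma has no $\lambda_k$-drift, it is strictly simpler than Lemma~\ref{lem:gen2}, so no genuinely new difficulty should arise.
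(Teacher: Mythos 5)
Your proposal reproduces the paper's proof essentially verbatim: the same three-term decomposition around $y_k^*$, Lemma~\ref{lemma:y_star_contraction} for the quadratic drift term $(ii)$, Lemma~\ref{lemma:y_star_smoothness_bound} with $v_k = z_{k+1}-y_k^*$ for the cross term $(iii)$, and the step-size rule \eqref{eq:step_cond_beta_square_gamma} absorbing the $M\xi^2 l_{*,1}^2\beta_k^2$ smoothness term, exactly as in the paper. One slip worth fixing: your stated choice $\eta_k = T\mu_g\gamma_k/(16\xi)$ gives $\xi\alpha_k\eta_k = T\mu_g\alpha_k\gamma_k/16$ rather than the $T\mu_g\gamma_k/16$ you claim (the analogous choice in Lemma~\ref{lem:gen2} worked only because $\alpha_k\lambda_k = \beta_k$), so you need $\alpha_k$ in the denominator --- the paper takes $\eta_k = T\mu_g\gamma_k/(8\xi\alpha_k)$ --- after which your coefficient accounting, including the $O\left(\xi^2\alpha_k^2 l_{*,0}^2/(T\mu_g\gamma_k)\right)$ factor on $\|q_k^x\|^2$, goes through as stated.
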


\begin{proof}
We estimate each term in the following simple decomposition.
\begin{align*}
    \|z_{k+1} - y_{k+1}^*\|^2 = \underbrace{\|z_{k+1} - y_k^*\|^2}_{(i)} + \underbrace{\|y_{k+1}^* - y_k^*\|^2}_{(ii)} - 2 \underbrace{\vdot{z_{k+1}-y_{k}^*}{y_{k+1}^*-y_k^*}}_{(iii)}. 
\end{align*}
Lemma~\ref{lemma:y_star_lagrangian_continuity} implies that
\begin{align*}
    (ii): \Exs[\|y_{k+1}^* - y_k^*\|^2| \mathcal{F}_k] &\le l_{*,0}^2 \xi^2 (\alpha_k^2 \|\nabla_x q_k\|^2 + \alpha_k^2 \sigma_f^2 + \beta_k^2 \sigma_g^2).
\end{align*}
For $(iii)$, we recall Lemma \ref{lemma:y_star_smoothness_bound} with $v_k = z_{k+1} - y_k^*$ and $\eta_k = T \mu_g \gamma_k / (8 \xi \alpha_k)$, we have
\begin{align*}
    (iii): \vdot{z_{k+1} - y_k^*}{y_{k+1}^* - y_k^*} &\le ( T \gamma_k \mu_g / 8 + M \xi^2 l_{*,1}^2 \beta_k^2) \Exs[\|z_{k+1}-y_k^*\|^2 | \mathcal{F}_k] \\
    &\quad + \left( \frac{\xi^2 \alpha_k^2}{4} + \frac{2 \xi^2 \alpha_k^2 l_{*,0}^2}{T \mu_g \gamma_k} \right) \|q_k^x\|^2 + \frac{\xi^2}{4} (\alpha_k^2 \sigma_f^2 + \beta_k^2 \sigma_g^2).
\end{align*}
The above bounds and Lemma~\ref{lem:z_intermediate_contraction}
imply that
\begin{align}
    \Exs[\J_{k+1}|\mathcal{F}_k] &\le \left(1 + \frac{T\gamma_k\mu_g}{4} + 2 M \xi^2 l_{*,1}^2 \beta_k^2\right) \cdot \Exs[\|z_{k+1}-y_{k}^*\|^2|\mathcal{F}_k] \nonumber \\
    &\quad + \xi^2 \alpha_k^2 \cdot \left( l_{*,0}^2 + \frac{4 l_{*,0}^2}{T \mu_g \gamma_k} + \frac{1}{2} \right) \|q_k^x\|^2+ \xi^2 \cdot \left(\frac{1}{2} + l_{*,0}^2 \right) (\alpha_k^2 \sigma_f^2 + \beta_k^2 \sigma_g^2).
\end{align}
Using \eqref{eq:step_cond_beta_square_gamma}, we conclude.
\end{proof}

Next, $\gamma_k$ is chosen to satisfy the following step-size rules:
\begin{align}
    \textbf{(step-size rule):} \qquad l_{g,1} \gamma_k \le 1/4, \qquad T \mu_g \gamma_k \le 1/4, \label{eq:step_cond_gamma_standard}
\end{align}
which directly comes from \eqref{eq:step_size_theorema}. 

\begin{lemma}
\label{lem:z_intermediate_contraction}
If \eqref{eq:step_cond_gamma_standard} holds, then for each $k \in \mathbb{N}$, the following holds:
\begin{align}
    \Exs[\|z_{k+1} - y_k^*\|^2 | \mathcal{F}_k] &\le (1 - 3 T \mu_g \gamma_k / 4) \J_k + T \gamma_k^2 \sigma_g^2. \label{eq:z_intermediate_contraction}
    \end{align}
\end{lemma}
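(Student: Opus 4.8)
The goal is to prove Lemma~\ref{lem:z_intermediate_contraction}: after $T$ inner steps of gradient descent on $g(x_k,\cdot)$, the error $\|z_{k+1}-y_k^*\|^2$ contracts geometrically from $\J_k = \|z_k-y_k^*\|^2$, up to an additive variance term. The plan is to mirror exactly the single-step analysis already carried out for $y_k$ in Lemma~\ref{lem:gen2}, specialized to the lower-level objective $g$ rather than the Lagrangian $\mL_{\lambda_k}$. The key simplification here is that the target $y_k^* = y^*(x_k)$ is \emph{fixed} throughout the inner loop (since $x_k$ is frozen), so no cross terms involving movement of the target appear; this is a pure contraction toward a stationary target.

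First I would write the single inner-step recursion. The update is $z_{k,t+1} = z_{k,t} - \gamma_k h_{gz}^{k,t}$ with $\Exs[h_{gz}^{k,t}\mid \mathcal{F}_k] = \grad_y g(x_k, z_{k,t}) = q_{k,t}^z$, so that
\begin{align*}
    \Exs[\|z_{k,t+1} - y_k^*\|^2 \mid \mathcal{F}_k] = \|z_{k,t} - y_k^*\|^2 - 2\gamma_k \vdot{\grad_y g(x_k, z_{k,t})}{z_{k,t} - y_k^*} + \Exs[\|z_{k,t+1} - z_{k,t}\|^2 \mid \mathcal{F}_k].
\end{align*}
Next I would invoke the $\mu_g$-strong convexity and $l_{g,1}$-smoothness of $g(x_k,\cdot)$ in $y$ (Assumptions~\ref{assumption:extra_nice_g} and \ref{assumption:nice_functions}), together with the fact that $y_k^*$ is the minimizer so $\grad_y g(x_k, y_k^*) = 0$. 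Standard strong-convexity inequalities give the two-sided bound
\begin{align*}
    \max\left(\frac{\mu_g}{2}\|z_{k,t}-y_k^*\|^2,\ \frac{1}{2 l_{g,1}}\|\grad_y g(x_k,z_{k,t})\|^2\right) \le \vdot{\grad_y g(x_k,z_{k,t})}{z_{k,t}-y_k^*}.
\end{align*}
For the second-moment term I would use $\Exs[\|z_{k,t+1}-z_{k,t}\|^2 \mid \mathcal{F}_k] \le \gamma_k^2\|\grad_y g(x_k,z_{k,t})\|^2 + \gamma_k^2\sigma_g^2$, where only the $\sigma_g$ variance enters (no $\sigma_f$), since the $z$-update uses only $g$-gradients. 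Combining these, the gradient-norm term $\gamma_k^2\|\grad_y g\|^2$ is absorbed by the negative $-2\gamma_k\cdot\frac{1}{2l_{g,1}}\|\grad_y g\|^2$ term precisely when $l_{g,1}\gamma_k \le 1/4$ (the first part of \eqref{eq:step_cond_gamma_standard}), leaving the clean one-step contraction
\begin{align*}
    \Exs[\|z_{k,t+1}-y_k^*\|^2 \mid \mathcal{F}_k] \le (1 - \mu_g\gamma_k)\|z_{k,t}-y_k^*\|^2 + \gamma_k^2\sigma_g^2.
\end{align*}

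Finally I would iterate this recursion $T$ times from $z_{k,0}=z_k$ to $z_{k,T}=z_{k+1}$. Unrolling gives a factor $(1-\mu_g\gamma_k)^T$ on $\J_k$ and a geometric sum $\sum_{t=0}^{T-1}(1-\mu_g\gamma_k)^t\gamma_k^2\sigma_g^2 \le T\gamma_k^2\sigma_g^2$ on the noise. The only remaining step is to convert $(1-\mu_g\gamma_k)^T$ into the stated linear-in-$T$ rate $1 - 3T\mu_g\gamma_k/4$; this uses the condition $T\mu_g\gamma_k \le 1/4$ (the second part of \eqref{eq:step_cond_gamma_standard}) via the elementary inequality $(1-x)^T \le 1 - 3Tx/4$ valid for $Tx \le 1/4$ (which follows because $(1-x)^T \le 1 - Tx + \binom{T}{2}x^2 \le 1 - Tx(1 - Tx/2)$ and $Tx/2 \le 1/8 < 1/4$). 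I do not anticipate a genuine obstacle, since the target is stationary and the argument is strictly simpler than Lemma~\ref{lem:gen2}; the only point demanding care is the bookkeeping to ensure the per-step contraction factor and the step-size threshold line up so that the final $3/4$ constant is valid, and that the inner-loop indexing ($z_{k,0}=z_k$, $z_{k,T}=z_{k+1}$) is correctly matched to the statement.
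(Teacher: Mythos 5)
Your proof is correct and follows essentially the same route as the paper: a single-step contraction of $\|z_{k,t}-y_k^*\|^2$ from the coercivity/co-coercivity of the strongly convex, smooth $g(x_k,\cdot)$ toward the fixed target $y_k^*$, with the $\gamma_k^2\|\grad_y g\|^2$ term absorbed under $l_{g,1}\gamma_k \le 1/4$, iterated $T$ times with only the $\sigma_g^2$ variance accumulating. If anything you are more careful than the paper at the last step: the paper just ``repeats this relation $T$ times,'' whereas your explicit conversion $(1-x)^T \le 1 - Tx + \binom{T}{2}x^2 \le 1 - \frac{3}{4}Tx$ is exactly where the second condition $T\mu_g\gamma_k \le 1/4$ of \eqref{eq:step_cond_gamma_standard} is actually used (Bernoulli's inequality alone goes the wrong way), and your splitting of the inner product (use the combined sum form $\vdot{\grad_y g}{z-y_k^*} \ge \frac{\mu_g}{2}\|z-y_k^*\|^2 + \frac{1}{2l_{g,1}}\|\grad_y g\|^2$ rather than the max, so both parts may be used simultaneously) delivers a per-step factor strong enough to support the stated $3/4$ constant.
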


\begin{proof}
We analyze one step iteration of the inner loop: for each $t = 0, \cdots, T-1$,
\begin{align*}
    \|z_{k}^{(t+1)} - y_k^*\|^2 &= \|z_{k}^{(t)} - y_k^*\|^2 + \|z_{k}^{(t+1)} - z_k^{(t)}\|^2 +  2\vdot{z_{k}^{(t+1)}-z_{k}^{(t)}}{z_{k}^{(t)}-y_k^*} \\
    &= \|z_{k}^{(t)} - y_k^*\|^2 + \gamma_k^2 \|h_{gz}^{k,t} \|^2 - 2\gamma_k \vdot{h_{gz}^{k,t}}{z_{k}-y_k^*}.
\end{align*}
Here, $z_{k+1} = z_{k}^{(T)}$ and $z_{k} = z_{k}^{(0)}$. Note that $\Exs[h_{gz}^{k,t}] = \grad_y g(x_k, z_k^{(t)}) = \grad_y g_k(z_k^{(t)})$ where $g_k(z_k^{(t)}):= g(x_k, z_k^{(t)})$. Taking expectation,
\begin{align*}
    \Exs[\|z_{k}^{(t+1)} - y_k^*\|^2 | \mathcal{F}_{k} ] &\le \|z_{k}^{(t)} - y_k^*\|^2 + \gamma_k^2 \|\grad g_k (z_k^{(t)})\|^2 +  \gamma_k^2 \sigma_g^2 - 2\gamma_k \vdot{\grad g_k(z_k^{(t)})}{z_{k}^{(t)} -y_k^*}.
\end{align*}
The strong convexity and smoothness of $g_k$ imply the coercivity and co-coercivity \cite{nesterov2018lectures}, that is,
\begin{align*}
    \max \left( \mu_g \|z_k^{(t)} -y_k^*\|^2, \frac{1}{l_{g,1}} \|\grad g_k (z_k^{(t)}) - \grad g_k(y_k^*)\|^2 \right) \le \vdot{\grad g_k(z_k^{(t)}) - \grad g_k(y_k^*)}{z_{k}^{(t)} - y_k^*}.
\end{align*}
Note that $y_k^*$ minimizes $g_k(y)$. Use this to cancel out $\gamma_k^2 \|\grad g_k (z_k^{(t)})\|^2$, yielding
\begin{align*}
    \Exs[\|z_{k}^{(t+1)} - y_k^*\|^2 | \mathcal{F}_k] &\le \|z_{k}^{(t)} - y_k^*\|^2 + \gamma_k^2 \sigma_g^2 - \gamma_k(1- l_{g,1} \gamma_k) \vdot{\grad g_k(z_k^{(t)})}{z_{k}^{(t)} -y_k^*} \nonumber \\
    &\le (1 - 3 \mu_g \gamma_k / 4) \|z_{k}^{(t)} - y_k^*\|^2 + \gamma_k^2 \sigma_g^2.
\end{align*}
For this to hold, we need a step-size condition \eqref{eq:step_cond_gamma_standard}.
We can repeat this relation for $T$ times, and we get \eqref{eq:z_intermediate_contraction}.
\end{proof}

\subsection{Proof of Theorem \ref{theorem:general_nonconvex}}
\label{sec:pfgeneral}

Recall $\mathbb{V}_k$ given in \eqref{eq:pot0}. In what follows, we examine    
\begin{align*}
    \mathbb{V}_{k+1} - \mathbb{V}_k &= F(x_{k+1}) - F(x_k) + \lambda_{k+1} l_{g,1} \I_{k+1} - \lambda_k l_{g,1} \I_k \\ &+ \frac{\lambda_{k+1} l_{g,1}}{2} \J_{k+1} - \frac{\lambda_k l_{g,1}}{2} \J_k.
\end{align*}
Using the estimate of $F(x_{k+1}) - F(x_k)$ given in Proposition~\ref{prop:g1} and rearranging the terms, we have
\begin{align*}
    \Exs[\mathbb{V}_{k+1} - \mathbb{V}_k | \mathcal{F}_k] 
    &\le - \frac{\xi \alpha_k}{2} \|\grad F(x_k)\|^2  - \frac{\xi \alpha_k}{4} \Exs[\|q_k^x\|^2  |\mathcal{F}_k] + \frac{\xi \alpha_k}{2} \cdot 3 C_\lambda^2 \lambda_k^{-2} +\frac{\xi^2 l_{F,1}}{2} (\alpha_k^2 \sigma_f^2 + \beta_k^2 \sigma_g^2) \\
    &\quad + l_{g,1} \underbrace{\Exs[\lambda_{k+1} \I_{k+1}  + \frac{\lambda_k T \beta_k\mu_g }{16} \|y_{k+1} - y_{\lambda, k}^*\|^2  - \lambda_k \I_k |\mathcal{F}_k]}_{(i)}\\
    &\quad + \frac{l_{g,1}}{2} \underbrace{\Exs[\lambda_{k+1} \J_{k+1} + \frac{\lambda_k T \gamma_k \mu_g }{32} \|z_{k+1} - y_k^*\|^2 - \lambda_k \J_k |\mathcal{F}_k]}_{(ii)}
\end{align*}


\textbf{Estimation of $(i)$: } 
From Lemma~\ref{lem:gen2}, and $\lambda_{k+1} = \lambda_k + \delta_k$ yield that
\begin{align*}
    (i) &\le \lambda_k \left(1 + \frac{5 T \beta_k\mu_g }{16} + \frac{\delta_k}{\lambda_k} \right) \Exs[\|y_{k+1} - y_{\lambda, k}^*\|^2 | \mathcal{F}_k] -\lambda_k \I_k \\
    &\quad + \underbrace{O(\xi^2 l_{\lambda,0}^2) \frac{\lambda_k \alpha_k^2}{\mu_g T \beta_k} \|q_k^x\|^2 + O(\xi^2 l_{*,0}^2) \lambda_k (\alpha_k^2 \sigma_f^2 + \beta_k^2 \sigma_g^2) + O\left( \frac{l_{f,0}^2}{\mu_g^3} \right) \cdot \frac{\delta_k}{\lambda_k^2}}_{(iii)}.  
\end{align*}
Given the step-size rules \eqref{eq:step_cond_beta_y_lambda}, we obtain
\begin{align*}
    (i) &\le \lambda_k \left(1 + \frac{T \beta_k \mu_g}{2}\right) \Exs[\|y_{k+1} - y_{\lambda, k}^*\|^2 | \mathcal{F}_k] -\lambda_k \I_k  + (iii). 
\end{align*}
The estimation of $\|y_{k+1} - y_{\lambda, k}^*\|^2$ from Lemma~\ref{lem:gen20} yields that
\begin{align*}
    (i) &\le - \frac{\lambda_k T \mu_g \beta_k}{4} \I_k + O(\xi^2 l_{*,0}^2) \frac{\alpha_k}{\mu_g T} +  (iii),\\ 
    &= - \frac{\lambda_k T \mu_g \beta_k}{4} \I_k + O(\xi^2 l_{*,0}^2) \frac{\alpha_k}{\mu_g T} + O(T + \xi^2 l_{*,0}^2) \lambda_k (\alpha_k^2 \sigma_f^2 + \beta_k^2 \sigma_g^2) + O\left( \frac{l_{f,0}^2}{\mu_g^3} \right) \cdot \frac{\delta_k}{\lambda_k^2}.
\end{align*}
Here, we use $(1+a/2)(1-3a/4) \leq 1-a/4$ for $a>0$.

\medskip

\textbf{Estimation of $(ii)$: }
Lemma~\ref{lemma:z_contraction} yields that 
\begin{align*}
    (ii) &\le \lambda_k \left(1 + \frac{\delta_k}{\lambda_k} + \frac{3T\gamma_k\mu_g}{8} + \frac{\lambda_k T \beta_k \mu_g }{32}  \right) \Exs[\|z_{k+1} - y_k^*\|^2 | \mathcal{F}_k ] - \lambda_k \J_k \\
    &\quad + \underbrace{O(\xi^2 l_{*,0}^2) \frac{\lambda_{k+1} \alpha_k^2}{T \mu_g  \gamma_k} \|q_k^x\|^2 + O(\xi^2  \lambda_{k+1} l_{*,0}^2) (\alpha_k^2 \sigma_f^2 + \beta_k^2 \sigma_g^2)}_{(iv)}.
\end{align*}
With $\beta_k \le \gamma_k$, and thus $\delta_k / \lambda_k < T\mu_g \gamma_k / 32$, we have that 
\begin{align*}
    (ii) &\le \lambda_k \left(1 + \frac{T\gamma_k\mu_g}{2} \right) \Exs[\|z_{k+1} - y_k^*\|^2 | \mathcal{F}_k ] - \lambda_k \J_k + (iv)
\end{align*}
Similar to the argument for $(i)$ above, Lemma~\ref{lem:z_intermediate_contraction} yields
\begin{align*}
    (ii) &\le - \frac{\lambda_k T \mu_g \gamma_k}{4} \J_k + O(\xi^2 l_{*,0}^2) \frac{\alpha_k \beta_k}{T \mu_g \gamma_k} \|q_k^x\|^2 + O(\xi^2 \lambda_k l_{*,0}^2)  (\alpha_k^2 \sigma_f^2 + \beta_k^2 \sigma_g^2) + O(\lambda_k) T\gamma_k^2 \sigma_g^2.
\end{align*}


\medskip

Plug the bound for $(i)$ and $(ii)$, after rearranging terms, we get
\begin{align*}
    \Exs[\mathbb{V}_{k+1} - \mathbb{V}_k | \mathcal{F}_k] &\le - \frac{\xi \alpha_k}{2} \|\grad F(x_k)\|^2  + \frac{\xi \alpha_k}{2} \cdot 3 C_\lambda^2 \lambda_k^{-2} +\frac{\xi^2 l_{F,1}}{2} (\alpha_k^2 \sigma_f^2 + \beta_k^2 \sigma_g^2) \\
    &\quad - \frac{\xi \alpha_k}{4} \left( 1 - O \left(\frac{\xi l_{g,1} l_{*,0}^2 \beta_k}{\mu_g T \gamma_k} \right) - O\left( \frac{\xi l_{g,1} l_{*,0}^2 }{\mu_g T} \right) \right) \Exs[\|q_k^x\|^2  |\mathcal{F}_k] \\
    &\quad - \frac{\lambda_k l_{g,1} T \mu_g \beta_k}{4} \I_k -\frac{\lambda_k l_{g,1} T 
    \mu_g \gamma_k}{4} \J_k \\
    &\quad +  O(T + \xi^2 l_{*,0}^2) \cdot l_{g,1} \lambda_k (\alpha_k^2 \sigma_f^2 + (\beta_k^2 + \gamma_k^2) \sigma_g^2) + O\left(\frac{l_{g,1} l_{f,0}^2}{\mu_g^3} \right) \frac{\delta_k}{\lambda_k^2},
\end{align*}
A crucial step here is to ensure that terms driven by $\Exs[\|q_k^x\|^2]$ is negative. 
To ensure this, we require 
\begin{align*}
    \textbf{(step-size rules):} \qquad &\xi l_{g,1} l_{*,0}^2 \beta_k \le c_1 \mu_g T \gamma_k, \nonumber \\ 
    & \xi l_{g,1} l_{*,0}^2 \le c_2 \mu_g T , 
\end{align*}
for some absolute constants $c_1, c_2 > 0$, which holds by $\beta_k \le \gamma_k$ and \eqref{eq:step_size_theoremb} with sufficiently small $c_\xi > 0$. Once this holds, we can conclude that 
\begin{align*}
    \Exs[\mathbb{V}_{k+1} - \mathbb{V}_k | \mathcal{F}_k] &\le - \frac{\xi \alpha_k}{2} \|\grad F(x_k)\|^2 - \frac{\lambda_k T \mu_g \gamma_k}{4} \|z_k - y_k^*\|^2 - \frac{\lambda_k T \mu_g \beta_k}{4} \|y_k - y_{\lambda,k}^*\|^2 \nonumber \\
    &\quad + O(\xi C_{\lambda}^2) \frac{\alpha_k}{\lambda_k^2} + O\left(\frac{l_{g,1} l_{f,0}^2}{\mu_g^3} \right) \frac{\delta_k}{\lambda_k^2} + O(\xi^2 l_{F,1}) (\alpha_k^2 \sigma_f^2 + \beta_k^2 \sigma_g^2) \nonumber \\
    &\quad + O(T + \xi^2 l_{*,0}^2) \cdot l_{g,1} \lambda_k (\alpha_k^2 \sigma_f^2 + (\beta_k^2 + \gamma_k^2) \sigma_g^2).
\end{align*}
We can sum over $k=0$ to $K-1$, and leaving only dominating terms, since $\sum_k \delta_k / \lambda_k^2 = O(1)$ (because $\delta_k / \lambda_k = O(1/k)$ and $\lambda_k = \poly(k)$), we have the theorem.


\subsection{Proof of Corollary \ref{corollary:non_convex_F}}

We first show that with the step-size design in theorem, $\lambda_k = \gamma_k / (2\alpha_k)$ for all $k$. To check this, by design, $\lambda_0 = \gamma_0 / (2\alpha_0)$ and by mathematical induction, 
\begin{align*}
    \frac{T \mu_g}{16} \alpha_k \lambda_k^2 = \frac{T}{32} \frac{c_\gamma}{2 c_\alpha} (k+k_0)^{-2c + a},
\end{align*}
and 
\begin{align*}
    \frac{c_\gamma}{2 c_\alpha} ((k + k_0 + 1)^{a-c} - (k+k_0)^{a-c}) \le \frac{(a-c) c_\gamma}{2 c_\alpha} (k+k_0)^{-1-c+a}.
\end{align*}
As long as $-2c + a \ge -1 - c + a$, or equivalently, $c \le 1$ and $T \ge 32$, it always holds that
\begin{align}
    \lambda_{k+1} = \frac{c_\gamma}{2c_\alpha} (k+k_0 + 1)^{a-c} = \frac{\gamma_{k+1}}{2\alpha_{k+1}}. \label{eq:lambda_math_induction}
\end{align}

Now applying the step-size designs, we obtain the following:
\begin{align}
    \sum_{k=0}^{K-1} \frac{\Exs[\|\grad F(x_k)\|^2]}{(k+k_0)^{a}} &\le O_{\texttt{P}} (1) \cdot \sum_{k} \frac{1}{(k+k_0)^{3a-2c}} + O_{\texttt{P}}(\sigma_f^2) \cdot \sum_{k} \frac{1}{(k+k_0)^{a+c}} \nonumber \\
    &\quad + O_{\texttt{P}}(\sigma_g^2) \cdot \sum_{k} \frac{1}{(k+k_0)^{3c-a}} + O_{\texttt{P}}(1). 
\end{align}
We decide the rates $a,c \in [0,1]$ will be decided differently for different stochasticity. Let $b = a - c$. Note that with the step size deisng, we have $\lambda_k = \gamma_{k} / (2\alpha_k) = \frac{2\lambda_0}{k_0^{a-c}} (k+k_0)^{a-c} = O(k^{b})$. Let $R$ be a random variable uniformly distributed over $\{0,1, ..., K\}$. Note that the left hand side is larger than
\begin{align*}
    \frac{K}{(K+k_0)^a} \sum_{k=1}^{K-1} \frac{1}{K} \Exs[\|\grad F(x_k)\|^2] \ge K^{1-a} \cdot \Exs[\|\grad F(x_R)\|^2]. 
\end{align*}
We consider three regimes:

\paragraph{Stochasticity in both upper-level and lower-level objectives: $\sigma_f^2, \sigma_g^2 > 0$.} In this case, we set $a = 5/7, c = 4/7$, and thus $\lambda_k = k^{1/7}$. The dominating term is $\sigma_g^2 \cdot \sum_k (\gamma_k^2 \lambda_k) = \sum_k O(k^{-1}) = O(\log K)$ and $C_\lambda^2 \cdot \sum_k(\alpha_k \lambda_k^{-2}) = O(\log K)$. From the left-hand side, we have $K^{1-a} = K^{2/7}$. Therefore,
\begin{align*}
    \Exs[\|\grad F(x_R)\|^2] = O\left( \frac{\log K}{K^{2/7}} \right).
\end{align*}

\paragraph{Stochasticity only in the upper-level: $\sigma_f^2 > 0, \sigma_g^2 = 0$.}
In this case, we can take $a = 3/5, c = 2/5$. When $\sigma_g^2 = 0$, the dominating term is $\sigma_f \cdot \sum_k (\alpha_k^2 \lambda_k) = \sum_{k} O(k^{-1}) = O(\log K)$ and $O(C_\lambda^2) \cdot \sum_k (\alpha_k \lambda_k^{-2}) = \sum_{k} O(k^{-1}) = O(\log K)$. Since $K^{1-a} = O(K^{2/5})$, yielding
\begin{align*}
    \Exs[\|\grad F(x_R)\|^2] = O\left( \frac{\log K}{K^{2/5}} \right).
\end{align*}

\paragraph{Deterministic case: $\sigma_f^2 = 0, \sigma_g^2 = 0$.}
Here, we can take $a = 1/3, c = 0$ with a dominating term $\sum_k (\alpha_k \lambda_k^{-2}) = O(\log K)$. Since there is no stochasticity in the algorithm, we have
\begin{align*}
    \|\grad F(x_K)\|^2 = O\left( \frac{\log K}{K^{2/3}} \right).
\end{align*}

\section{Main Results for Algorithm \ref{algo:algo_name2}}
\label{appendix:momentum_method}
We start with a few definitions and additional auxiliary lemmas. We first define the momentum-assisted moving direction of variables. They can be recursively defined as
\begin{align*}
    \tilde{h}_{z}^{k} &:= \nabla_y g(x_k, z_{k}; \phi_{z}^{k}) + (1 - \eta_{k}) \left( \tilde{h}_{z}^{k-1} - \grad_y g(x_{k-1}, z_{k-1}; \phi_{z}^{k}) \right), \\
    \tilde{h}_{fy}^{k} &:= \nabla_y f(x_k, y_{k}; \zeta_{y}^{k}) + (1 - \eta_{k}) \left( \tilde{h}_{fy}^{k-1} - \grad_y f(x_{k-1}, y_{k-1}; \zeta_{y}^{k}) \right), \\
    \tilde{h}_{gy}^{k} &:= \nabla_y g (x_k, y_{k}; \phi_{y}^{k}) + (1 - \eta_{k}) \left( \tilde{h}_{gy}^{k-1} - \grad_y g (x_{k-1}, y_{k-1}; \phi_{y}^{k}) \right), 
\end{align*}
for the inner variable updates, and 
\begin{align*}
    \tilde{h}_{fx}^k &:= \nabla_x f(x_k, y_{k+1}; \zeta_{x}^k) + (1 - \eta_k) \left(\tilde{h}_{fx}^{k-1} - \nabla_x f(x_{k-1}, y_{k}; \zeta_{x}^k) \right), \\
    \ \tilde{h}_{gxy}^k &:= \nabla_{x} g(x_k, y_{k+1}; \phi_{x}^k) + (1 - \eta_k) \left(\tilde{h}_{gxy}^{k-1} - \nabla_x g(x_{k-1}, y_{k}; \phi_{x}^k) \right), \\ 
    \ \tilde{h}_{gxz}^k &:= \nabla_{x} g(x_k, z_{k+1}; \phi_{x}^k) + (1 - \eta_k) \left(\tilde{h}_{gxz}^{k-1} - \nabla_x g(x_{k-1}, z_{k}; \phi_{x}^k) \right).
\end{align*}
for the outer variable update with some proper choice of $\eta_k$. We also define stochastic error terms incurred by random sampling:
\begin{align}
    \tilde{e}_k^x &:= \tilde{h}_{fx}^k + \lambda_k (\tilde{h}_{gxy}^k - \tilde{h}_{gxz}^k) - q_k^x, \nonumber \\
    \tilde{e}_k^y &:= (\tilde{h}_{fy}^k + \lambda_k \tilde{h}_{gy}^k) - q_k^y, \nonumber \\
    \tilde{e}_k^z &:= \tilde{h}_{z}^k - q_k^z, \label{eq:def_ek}
\end{align}
where $q_k^x, q_k^y, q_k^z$ are 
defined in \eqref{eq:qk_def} (we dropped $t$ from subscript since here we consider $T=1$).

\subsection{Additional Auxiliary Lemmas}
The following lemmas are analogous of Lemma \ref{lemma:y_star_contraction}. 

\begin{lemma}
    \label{lemma:y_star_contraction_momentum}
    At every $k$ iteration conditioned on $\mathcal{F}_k$, we have
    \begin{align*}
        \Exs[\|y^*(x_{k+1}) - y^*(x_k)\|^2| \mathcal{F}_k] \le 2\xi^2 l_{*,0}^2 \alpha_k^2 \left( \Exs[\|q_k^x\|^2 | \mathcal{F}_k] + \Exs[\|\tilde{e}_k^x\|^2] \right). 
    \end{align*}
\end{lemma}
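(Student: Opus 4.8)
The plan is to bound the squared movement of the lower-level optimizer $y^*$ between consecutive outer iterates by the squared movement of $x$, and then to expand the $x$-update of Algorithm~\ref{algo:algo_name2} in terms of its expected direction $q_k^x$ and the stochastic error $\tilde{e}_k^x$. The first ingredient is the Lipschitz continuity of $y^*(x)$: since $g$ is $\mu_g$-strongly convex in $y$ with $l_{g,1}$-smooth gradients, the implicit function theorem gives $\|y^*(x_{k+1}) - y^*(x_k)\| \le l_{*,0} \|x_{k+1} - x_k\|$, where $l_{*,0} \ge l_{\lambda,0}$ (equivalently the $\lambda = \infty$ Lipschitz constant $l_{g,1}/\mu_g$) bounds the sensitivity of $y^*$; this is exactly the constant appearing in Lemma~\ref{lemma:y_star_contraction}. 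Squaring yields $\|y^*(x_{k+1}) - y^*(x_k)\|^2 \le l_{*,0}^2 \|x_{k+1} - x_k\|^2$, so it suffices to control $\Exs[\|x_{k+1} - x_k\|^2 \mid \mathcal{F}_k]$.

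Next I would substitute the update rule $x_{k+1} - x_k = -\xi \alpha_k (\tilde{h}_{fx}^k + \lambda_k(\tilde{h}_{gxy}^k - \tilde{h}_{gxz}^k))$ from line~4 of Algorithm~\ref{algo:algo_name2}. By the definition of the stochastic error term in \eqref{eq:def_ek}, the bracketed momentum-assisted direction equals $q_k^x + \tilde{e}_k^x$, so $x_{k+1} - x_k = -\xi \alpha_k (q_k^x + \tilde{e}_k^x)$. Applying the elementary inequality $\|a + b\|^2 \le 2\|a\|^2 + 2\|b\|^2$ gives
\begin{align*}
    \Exs[\|x_{k+1} - x_k\|^2 \mid \mathcal{F}_k] \le 2\xi^2 \alpha_k^2 \left( \Exs[\|q_k^x\|^2 \mid \mathcal{F}_k] + \Exs[\|\tilde{e}_k^x\|^2] \right).
\end{align*}
Combining this with the Lipschitz bound from the previous step immediately produces the claimed inequality with the factor $2\xi^2 l_{*,0}^2 \alpha_k^2$ in front.

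The main difference from the non-momentum analogue (Lemma~\ref{lemma:y_star_contraction}) is worth flagging: there, because the gradient estimators are unbiased and conditionally independent of the past, the cross term vanishes in expectation and one separates the variance cleanly into the $\alpha_k^2 \sigma_f^2 + \beta_k^2 \sigma_g^2$ form. Here, by contrast, the momentum estimator $\tilde{h}$ is a biased (recursively accumulated) quantity that is not $\mathcal{F}_k$-measurable and carries correlation across iterations, so I cannot split off a clean conditional-variance term. The honest route is therefore to keep the aggregate error $\Exs[\|\tilde{e}_k^x\|^2]$ intact rather than attempting to decompose it, which is precisely why the statement is phrased with $\Exs[\|\tilde{e}_k^x\|^2]$ on the right-hand side; its subsequent control (via a separate recursion on the momentum errors, deferred to later lemmas) is where the real work lies, but for this lemma it is simply carried through. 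The only mild obstacle is confirming that the $2$-inflation from Young's inequality and the Lipschitz squaring are the sole sources of constants, so that no hidden dependence on $\lambda_k$ or $\beta_k$ leaks in — this holds because $\lambda_k$ is already absorbed into the definition of $q_k^x$ and $\tilde{e}_k^x$.
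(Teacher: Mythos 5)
Your proof is correct and follows the paper's own argument exactly: Lipschitz continuity of $y^*$ (via Lemma \ref{lemma:y_star_lagrangian_continuity} with $\lambda \to \infty$, constant $l_{*,0}$), substitution of $x_{k+1} - x_k = -\xi\alpha_k(q_k^x + \tilde{e}_k^x)$ from \eqref{eq:def_ek}, and the inequality $\|a+b\|^2 \le 2(\|a\|^2 + \|b\|^2)$. Your closing remark on why the momentum error $\Exs[\|\tilde{e}_k^x\|^2]$ must be carried intact rather than split into a conditional variance (as in Lemma \ref{lemma:y_star_contraction}) is accurate and correctly identifies where the real work is deferred.
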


\begin{lemma}
    \label{lemma:y_star_lambda_contraction_momentum}
    At every $k$ iteration conditioned on $\mathcal{F}_k$, we have
    \begin{align*}
        \Exs[\|y_{\lambda_{k+1}}^*(x_{k+1}) - y_{\lambda_k}^*(x_k)\|^2| \mathcal{F}_k] \le 4\xi^2 l_{*,0}^2 \alpha_k^2 \left( \Exs[\|q_k^x\|^2 | \mathcal{F}_k] + \Exs[\|\tilde{e}_k^x\|^2] \right) + \frac{8\delta_k^2 l_{f,0}^2}{\lambda_k^4 \mu_g^2}. 
    \end{align*}
\end{lemma}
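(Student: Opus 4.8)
The plan is to treat this as the momentum-version analogue of Lemma~\ref{lemma:y_star_contraction_momentum}, the only genuinely new feature being that the target point now also shifts because $\lambda$ increases from $\lambda_k$ to $\lambda_{k+1} = \lambda_k + \delta_k$. The starting point is the purely deterministic continuity estimate of Lemma~\ref{lemma:y_star_lagrangian_continuity}, applied with $\lambda_1 = \lambda_k$, $\lambda_2 = \lambda_{k+1}$, $x_1 = x_k$, and $x_2 = x_{k+1}$; this is legitimate since $\lambda_k \ge \lambda_0 \ge 2l_{f,1}/\mu_g$ and $\lambda_{k+1}\ge\lambda_k$. It gives the pathwise bound
\begin{align*}
    \|y_{\lambda_{k+1}}^*(x_{k+1}) - y_{\lambda_k}^*(x_k)\| \le \frac{2\delta_k}{\lambda_k \lambda_{k+1}}\frac{l_{f,0}}{\mu_g} + l_{\lambda,0}\|x_{k+1}-x_k\| \le \frac{2\delta_k l_{f,0}}{\lambda_k^2 \mu_g} + l_{\lambda,0}\|x_{k+1}-x_k\|,
\end{align*}
where the last step uses $\lambda_{k+1}\ge\lambda_k$.

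Next I would square this and apply $(a+b)^2 \le 2a^2 + 2b^2$, which produces the term $\tfrac{8\delta_k^2 l_{f,0}^2}{\lambda_k^4\mu_g^2}$ (matching the claimed $\delta_k$-dependent term exactly) plus $2l_{\lambda,0}^2\|x_{k+1}-x_k\|^2$. The $x$-displacement is controlled directly from the update rule of Algorithm~\ref{algo:algo_name2}: since $x_{k+1}-x_k = -\xi\alpha_k(q_k^x + \tilde{e}_k^x)$ by the definition of the stochastic error $\tilde{e}_k^x$ in \eqref{eq:def_ek}, one more application of $\|a+b\|^2 \le 2\|a\|^2 + 2\|b\|^2$ yields $\|x_{k+1}-x_k\|^2 \le 2\xi^2\alpha_k^2(\|q_k^x\|^2 + \|\tilde{e}_k^x\|^2)$. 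Combining with $l_{\lambda,0}\le l_{*,0}$ (from the definition $l_{*,0} = 1 + \max_\lambda l_{\lambda,0}$) turns $2l_{\lambda,0}^2\|x_{k+1}-x_k\|^2$ into $4\xi^2 l_{*,0}^2\alpha_k^2(\|q_k^x\|^2 + \|\tilde{e}_k^x\|^2)$, after which taking $\Exs[\cdot\mid\mathcal{F}_k]$ gives the stated bound.

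The computation is routine, so the point that deserves care is conceptual rather than technical: unlike in the non-momentum analysis, the conditional mean of the update direction given $\mathcal{F}_k$ is not $q_k^x$, because the momentum estimators $\tilde{h}^k$ carry dependence on earlier iterations. This is precisely why the argument must be carried out pathwise — bounding $\|x_{k+1}-x_k\|^2$ deterministically through $q_k^x + \tilde{e}_k^x$ and only taking the conditional expectation at the very end, leaving $\Exs[\|q_k^x\|^2\mid\mathcal{F}_k]$ and $\Exs[\|\tilde{e}_k^x\|^2]$ intact. The factor $4$ (versus the factor $2$ in Lemma~\ref{lemma:y_star_contraction_momentum}) is the only arithmetic difference, and it comes entirely from the extra $(a+b)^2$ split needed to separate the $\lambda$-shift term from the $x$-displacement term.
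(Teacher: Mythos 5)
Your proof is correct and follows essentially the same route as the paper: apply Lemma~\ref{lemma:y_star_lagrangian_continuity} with $(\lambda_k,x_k)$ and $(\lambda_{k+1},x_{k+1})$, square via $(a+b)^2\le 2a^2+2b^2$, bound $\|x_{k+1}-x_k\|^2\le 2\xi^2\alpha_k^2(\|q_k^x\|^2+\|\tilde{e}_k^x\|^2)$ pathwise from the update rule, and use $l_{\lambda,0}\le l_{*,0}$ before taking expectations. Your version is in fact slightly more careful than the paper's displayed computation, which silently drops the $l_{f,0}^2/\mu_g^2$ factor that appears in the lemma statement and that you carry through correctly.
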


\subsection{Descent Lemma for Noise Variances}
A major change in the proof is that now we also track the decrease in stochastic error terms. Specifically, we show the following lemmas.
\begin{lemma}
    \label{lemma:descent_stochastic_noise_yz}
    \begin{align*}
        \Exs[\| \tilde{e}_{k+1}^z\|^2] &\le (1 - \eta_{k+1})^2 (1 + 8l_{g,1}^2 \gamma_k^2) \Exs[\|\tilde{e}_k^z\|^2] + 2 \eta_{k+1}^2 \sigma_g^2 \\
        &\qquad + 8 l_{g,1}^2 (1-\eta_{k+1})^2 \left( \xi^2 \alpha_k^2 \Exs[\|q_k^x\|^2] + \xi^2 \alpha_k^2 \Exs[\|\tilde{e}_k^x\|^2] + \gamma_k^2 \Exs[\|q_k^z\|^2] \right), \\
        \Exs[\| \tilde{e}_{k+1}^y\|^2] &\le (1-\eta_{k+1})^2 (1 + 96 l_{g,1}^2 \beta_k^2 ) \Exs[\| \tilde{e}_{k}^y\|^2 ] + 2 \eta_{k+1}^2 (\sigma_f^2 + \lambda_{k+1}^2 \sigma_g^2) + 12 \delta_k^2 \sigma_g^2 \\
    &\qquad + 96 l_{g,1}^2 (1-\eta_{k+1})^2 \beta_k^2 (\xi^2 \|q_k^x\|^2 + \xi^2 \|\tilde{e}_k^x\|^2 + \|q_k^y\|^2).
    \end{align*}
\end{lemma}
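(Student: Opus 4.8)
The plan is to prove both inequalities as instances of the momentum (STORM-type) error-descent recursion, using the almost-sure smoothness of the stochastic gradients granted by Assumption~\ref{assumption:nice_stochastic_fg}. For the $z$-error, I first unfold the recursive definition of $\tilde{h}_z^{k+1}$ and subtract $q_{k+1}^z = \grad_y g(x_{k+1}, z_{k+1})$ to obtain a one-step identity of the form
\begin{align*}
    \tilde{e}_{k+1}^z = (1-\eta_{k+1})\tilde{e}_k^z + \chi_{k+1}^z,
\end{align*}
where $\chi_{k+1}^z := [\nabla_y g(x_{k+1}, z_{k+1}; \phi) - \grad_y g(x_{k+1}, z_{k+1})] - (1-\eta_{k+1})[\nabla_y g(x_k, z_k; \phi) - \grad_y g(x_k, z_k)]$ is built from the single fresh sample $\phi = \phi_z^{k+1}$. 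Since $\chi_{k+1}^z$ is conditionally mean-zero given $\mathcal{F}_k$ while $\tilde{e}_k^z$ is $\mathcal{F}_k$-measurable, the cross term drops and $\Exs[\|\tilde{e}_{k+1}^z\|^2 \mid \mathcal{F}_k] = (1-\eta_{k+1})^2\|\tilde{e}_k^z\|^2 + \Exs[\|\chi_{k+1}^z\|^2\mid\mathcal{F}_k]$.

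To bound the increment I write $\chi_{k+1}^z = \eta_{k+1} a - (1-\eta_{k+1})(a-b)$ with $a,b$ the stochastic noises at $(x_{k+1},z_{k+1})$ and $(x_k,z_k)$, so that $\Exs\|\chi_{k+1}^z\|^2 \le 2\eta_{k+1}^2\sigma_g^2 + 2(1-\eta_{k+1})^2\Exs\|a-b\|^2$. Because $a-b$ is centered, $\Exs\|a-b\|^2 \le \Exs\|\nabla_y g(x_{k+1},z_{k+1};\phi) - \nabla_y g(x_k,z_k;\phi)\|^2 \le l_{g,1}^2\Exs[\|x_{k+1}-x_k\|^2 + \|z_{k+1}-z_k\|^2]$ by the a.s.\ $l_{g,1}$-smoothness of $g(\cdot;\phi)$. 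Substituting the updates of Algorithm~\ref{algo:algo_name2}, namely $x_{k+1}-x_k = -\xi\alpha_k(q_k^x + \tilde{e}_k^x)$ and $z_{k+1}-z_k = -\gamma_k(q_k^z + \tilde{e}_k^z)$ (using the definitions \eqref{eq:qk_def} and \eqref{eq:def_ek}), expanding with $\|u+v\|^2\le 2\|u\|^2+2\|v\|^2$, and moving the resulting $\|\tilde{e}_k^z\|^2$ contribution into the contraction factor yields the first claim.

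For the $y$-error I run the same recursion separately on the two component estimators, whose errors are $\tilde{e}_k^{fy} := \tilde{h}_{fy}^k - \grad_y f(x_k,y_k)$ and $\tilde{e}_k^{gy} := \tilde{h}_{gy}^k - \grad_y g(x_k,y_k)$, so that $\tilde{e}_k^y = \tilde{e}_k^{fy} + \lambda_k\tilde{e}_k^{gy}$ by \eqref{eq:def_ek}. Combining the two one-step recursions and using $\lambda_{k+1} = \lambda_k + \delta_k$, the contracting part is $(1-\eta_{k+1})(\tilde{e}_k^{fy} + \lambda_{k+1}\tilde{e}_k^{gy}) = (1-\eta_{k+1})\tilde{e}_k^y + (1-\eta_{k+1})\delta_k\tilde{e}_k^{gy}$: the moving multiplier leaves an extra coupling term absent from the standard single-objective analysis. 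A short regrouping shows this extra piece merges with the $\delta_k$-scaled fresh $g$-noise into a single $\delta_k\tilde{e}_{k+1}^{gy}$ term, whose dominant variance-type part yields the $12\delta_k^2\sigma_g^2$ contribution (the remaining step-dependent part being absorbed into the other right-hand-side terms). The genuinely mean-zero STORM increment for the combined estimator then contributes $2\eta_{k+1}^2(\sigma_f^2 + \lambda_{k+1}^2\sigma_g^2)$ --- the $\lambda_{k+1}^2$ appearing because the $g$-noise enters weighted by $\lambda_{k+1}$ --- together with, after invoking a.s.\ smoothness and the step bound $y_{k+1}-y_k = -\alpha_k(q_k^y + \tilde{e}_k^y)$, the $96 l_{g,1}^2\beta_k^2(\cdots)$ terms, where $\beta_k = \alpha_k\lambda_k$ is the effective $y$-step.

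I expect the main obstacle to be precisely the accounting around this moving-multiplier coupling: isolating the $\delta_k$-induced interaction between the $f$- and $g$-momentum errors and showing it collapses to a benign $\delta_k^2\sigma_g^2$ term without leaving an uncontrolled $\|\tilde{e}_k^{gy}\|^2$ on the right-hand side, while simultaneously verifying that the step-size and momentum schedule \eqref{eq:step_size_theorem_momentum} --- in particular $\delta_k/\gamma_k = o(1)$ and $\eta_{k+1} \ge c_\eta (l_{g,1}^3/\mu_g)\gamma_k^2$ --- keeps the effective contraction coefficient $(1-\eta_{k+1})^2(1 + O(l_{g,1}^2\beta_k^2))$ strictly below one, so that the coupled recursions for $\tilde{e}^x, \tilde{e}^y, \tilde{e}^z$ close in the subsequent potential-function argument.
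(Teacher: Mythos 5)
Your treatment of the $z$-inequality is correct and is essentially the paper's own argument: the same unfolding $\tilde{e}_{k+1}^z=(1-\eta_{k+1})\tilde{e}_k^z+\chi_{k+1}^z$, the same conditional-mean-zero cancellation of the cross term (though the conditioning should be on the sigma-algebra generated through the step-$k$ updates, the paper's $\mathcal{F}_{k+1}$, not $\mathcal{F}_k$ --- a harmless indexing slip since $\tilde{e}_k^z$ and $(x_{k+1},z_{k+1})$ are measurable there and $\phi_z^{k+1}$ is fresh), and the same almost-sure $l_{g,1}$-smoothness bound on the increment; your centered-difference refinement $\Exs\|a-b\|^2\le\Exs\|\nabla_y g(x_{k+1},z_{k+1};\phi)-\nabla_y g(x_k,z_k;\phi)\|^2$ even gives constant $4$ where the paper's Cauchy--Schwarz gives $8$, so the stated bound follows a fortiori.

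The $y$-inequality, however, has a genuine gap exactly at the step you flag. Your regrouping is algebraically correct: $\tilde{e}_{k+1}^y=(1-\eta_{k+1})\tilde{e}_k^y+\chi_{k+1}^{fy}+\lambda_k\chi_{k+1}^{gy}+\delta_k\tilde{e}_{k+1}^{gy}$. But the assertion that the coupling term has a ``dominant variance-type part'' of size $\delta_k^2\sigma_g^2$ is unproven and, as stated, false: $\tilde{e}_{k+1}^{gy}$ is the \emph{accumulated} momentum error of the $g$-estimator --- a weighted sum of all past increments governed by its own recursion --- not a single fresh-sample noise, so $\Exs\|\tilde{e}_{k+1}^{gy}\|^2$ is not $O(\sigma_g^2)$ per step without unrolling that recursion under the schedule \eqref{eq:step_size_theorem_momentum}. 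Nor can it be ``absorbed into the other right-hand-side terms'': writing $\delta_k\tilde{e}_k^{gy}=(\delta_k/\lambda_k)(\tilde{e}_k^y-\tilde{e}_k^{fy})$ lets you fold the $\tilde{e}_k^y$ component into the contraction factor, but only at order $(1+O(\mu_g\beta_k))$ rather than the claimed $(1+O(l_{g,1}^2\beta_k^2))$, and it still leaves $\Exs\|\tilde{e}_k^{fy}\|^2$, which neither appears on nor is dominated by the lemma's right-hand side. The paper takes a different route that never names this object: it unfolds the \emph{combined} estimator $\tilde{h}_{fy}^{k+1}+\lambda_{k+1}\tilde{h}_{gy}^{k+1}$ against $q_{k+1}^y(\zeta,\phi)$ and $q_k^y(\zeta,\phi)$ (weighted by $\lambda_{k+1}$ and $\lambda_k$ respectively), so the multiplier drift surfaces as $\delta_k$ times fresh-sample noise differences, bounded directly by $\delta_k^2\sigma_g^2$; if you track the exact algebra, a residual $(1-\eta_{k+1})\delta_k\,\tilde{e}_k^{gy}$ component is in fact present there too and is treated only implicitly (the paper's own bookkeeping is loose here --- its proof ends with $24\delta_k^2\sigma_g^2$ against $12$ in the statement), but the combined decomposition at least never requires a standalone bound on $\Exs\|\tilde{e}^{gy}\|^2$. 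To repair your route you would either have to carry $\delta_k^2\Exs\|\tilde{e}_k^{gy}\|^2$ as an explicit term, with $\tilde{e}^{fy},\tilde{e}^{gy}$ added to the potential function and their own descent lemmas (using $\delta_k/\lambda_k\le\mu_g\beta_k/8$ to keep the coupling subdominant), or switch to the paper's combined unfolding.
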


\begin{lemma}
    \label{lemma:descent_stochastic_noise_x}
    \begin{align*}
        \Exs[\| \tilde{e}_{k+1}^x\|^2] &\le (1 - \eta_{k+1})^2 (1 + 240 l_{g,1}^2 \xi^2 \beta_k^2) \Exs[\|\tilde{e}_{k}^x\|^2 ] + 6\eta_{k+1}^2 (\sigma_f^2 + \lambda_{k+1}^2 \sigma_g^2) + 80 \delta_k^2 \sigma_g^2 \\
        &\quad + 240 l_{g,1}^2 (1-\eta_{k+1})^2 \lambda_k^2 \left(\xi^2 \alpha_k^2 \|q_k^x\|^2 + \alpha_k^2 (\|q_k^y\|^2 + \|\tilde{e}_k^y\|^2) + \gamma_k^2 (\|q_k^z\|^2 + \|\tilde{e}_k^z\|^2) \right). 
    \end{align*}
\end{lemma}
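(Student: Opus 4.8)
The plan is to analyze $\tilde e^x_{k+1}$ through the error recursion that is standard for momentum/STORM-type variance reduction, but adapted to two features specific to this setting: the quantity of interest is the \emph{combination} $\tilde h_{fx}^k + \lambda_k(\tilde h_{gxy}^k - \tilde h_{gxz}^k)$, and the multiplier changes from $\lambda_k$ to $\lambda_{k+1}=\lambda_k+\delta_k$ between consecutive steps. First I would expand the three momentum recursions defining $\tilde h_{fx}^{k+1}, \tilde h_{gxy}^{k+1}, \tilde h_{gxz}^{k+1}$, multiply the last two by $\lambda_{k+1}$, and collect terms. Writing $\lambda_{k+1}=\lambda_k+\delta_k$ lets me peel off the current multiplier so that the accumulated part reassembles into $(1-\eta_{k+1})\big(\tilde h_{fx}^k + \lambda_k(\tilde h_{gxy}^k - \tilde h_{gxz}^k)\big)$, while the increment contributes a residual $(1-\eta_{k+1})\delta_k(\tilde h_{gxy}^k - \tilde h_{gxz}^k)$. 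Subtracting $q_{k+1}^x$ and using the definitions in \eqref{eq:def_ek}, the deterministic (noise-free) pieces cancel exactly, leaving
\begin{align*}
\tilde e_{k+1}^x = (1-\eta_{k+1})\tilde e_k^x + (1-\eta_{k+1})\delta_k(e_{gxy}^k - e_{gxz}^k) + \tilde F_{k+1},
\end{align*}
where $e_{gxy}^k := \tilde h_{gxy}^k - \grad_x g(x_k,y_{k+1})$, $e_{gxz}^k := \tilde h_{gxz}^k - \grad_x g(x_k,z_{k+1})$ are the component momentum errors and $\tilde F_{k+1}$ collects only the fresh samples $\zeta_x^{k+1},\phi_x^{k+1}$, so $\Exs[\tilde F_{k+1}\mid \mathcal F_{k+1}]=0$.

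Next I would condition on $\mathcal F_{k+1}$. The first two summands are $\mathcal F_{k+1}$-measurable while $\tilde F_{k+1}$ is mean-zero and built from independent fresh samples, so the cross term vanishes and
\begin{align*}
\Exs[\|\tilde e_{k+1}^x\|^2 \mid \mathcal F_{k+1}] = (1-\eta_{k+1})^2\,\|\tilde e_k^x + \delta_k(e_{gxy}^k - e_{gxz}^k)\|^2 + \Exs[\|\tilde F_{k+1}\|^2 \mid \mathcal F_{k+1}].
\end{align*}
For the fresh-noise term I would use the usual split of $\tilde F_{k+1}$ into an $\eta_{k+1}$-weighted ``noise at the new iterate'' piece and a $(1-\eta_{k+1})$-weighted ``new iterate minus old iterate'' difference piece. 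The first contributes $O(\eta_{k+1}^2)(\sigma_f^2 + \lambda_{k+1}^2\sigma_g^2)$ via Assumption~\ref{assumption:gradient_variance}, bounding the variance of the co-sampled difference $\grad_x g(\cdot,y;\phi)-\grad_x g(\cdot,z;\phi)$ by $4\sigma_g^2$. The second piece is variance-reduced: by the almost-sure smoothness of the stochastic functions (Assumption~\ref{assumption:nice_stochastic_fg}) its second moment is at most $l_{g,1}^2\lambda_{k+1}^2$ times the squared one-step increments of $x$, $y$, and $z$, where $l_{f,1}\le l_{g,1}\lambda_k$ absorbs the $f$-part.

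I would then express each one-step increment through the update rules, e.g. $x_{k+1}-x_k = -\xi\alpha_k(q_k^x+\tilde e_k^x)$ together with the analogous identities $y_{k+1}-y_k=-\alpha_k(q_k^y+\tilde e_k^y)$ and $z_{k+1}-z_k=-\gamma_k(q_k^z+\tilde e_k^z)$, so that each squared increment becomes a step-size-weighted combination of $\|q\|^2$ and $\|\tilde e\|^2$ terms; using $\beta_k=\alpha_k\lambda_k$ the factor $l_{g,1}^2\lambda_k^2\alpha_k^2$ collapses to $l_{g,1}^2\beta_k^2$. The portion of this drift proportional to $\|\tilde e_k^x\|^2$ (arising from the $x$-increment) is folded back into the leading coefficient, which is exactly why it becomes $(1-\eta_{k+1})^2(1+240\,l_{g,1}^2\xi^2\beta_k^2)$ rather than $(1-\eta_{k+1})^2$, while the remaining increments give the $240\,l_{g,1}^2(1-\eta_{k+1})^2\lambda_k^2(\cdots)$ bundle.

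The main obstacle is the bookkeeping forced by the time-varying multiplier: the residual $(1-\eta_{k+1})\delta_k(e_{gxy}^k - e_{gxz}^k)$ has no analogue in fixed-$\lambda$ STORM analyses. I would control the deterministic square $\|\tilde e_k^x + \delta_k(e_{gxy}^k - e_{gxz}^k)\|^2$ with Young's inequality, absorbing the $\|\tilde e_k^x\|^2$ cross-contribution into the same $(1+240\,l_{g,1}^2\xi^2\beta_k^2)$ coefficient and bounding the pure increment term against the gradient-noise floor using the step-size relation $\delta_k/\lambda_k = O(\beta_k)$ from \eqref{eq:step_size_theorem_momentum_a}; this is what produces the standalone $80\,\delta_k^2\sigma_g^2$ term. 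Taking total expectation then yields the stated inequality. Keeping the constants consistent across the $f$-, $gxy$-, and $gxz$-components, each of which carries a different evaluation point and (for the $g$-terms) the multiplier $\lambda_{k+1}$, is the most delicate and error-prone part of the computation.
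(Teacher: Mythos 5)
Your overall template is the paper's: expand the momentum recursions, kill the cross term by conditional unbiasedness of the fresh samples $\zeta_x^{k+1},\phi_x^{k+1}$ (which are drawn after $y_{k+2},z_{k+2}$ but before the $x$-update), split the fresh contribution into an $\eta_{k+1}$-weighted variance piece (giving the $6\eta_{k+1}^2(\sigma_f^2+\lambda_{k+1}^2\sigma_g^2)$ term) and a $(1-\eta_{k+1})$-weighted difference piece bounded via the almost-sure smoothness of Assumption \ref{assumption:nice_stochastic_fg}, then convert the one-step increments through $x_{k+1}-x_k=-\xi\alpha_k(q_k^x+\tilde{e}_k^x)$ and its $y,z$ analogues, fold the $\|\tilde{e}_k^x\|^2$ drift back into the leading coefficient, and use $l_{f,1}\le l_{g,1}\lambda_k$ and $\beta_k=\alpha_k\lambda_k$. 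All of that matches the paper's proof. Indeed, your exact identity for $\tilde{e}_{k+1}^x$, including the residual $(1-\eta_{k+1})\delta_k(e_{gxy}^k-e_{gxz}^k)$, is the literally correct one for the algorithm as defined (separate component recursions recombined with the new multiplier $\lambda_{k+1}$); the paper's displayed recursion carries no such residual because it treats $q_k^x(\zeta_x^{k+1},\phi_x^{k+1})$ as a single combined correction, so that every $\delta_k$-weighted leftover is a \emph{freshly sampled}, conditionally mean-zero noise of the form $\delta_k\,(\grad_x g(\cdot\,;\phi_x^{k+1})-\grad_x g(\cdot))$, each with variance at most $\sigma_g^2$ by Assumption \ref{assumption:gradient_variance}; those fresh terms are exactly what produce the $80\,\delta_k^2\sigma_g^2$ in the stated bound.

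The genuine gap is your disposal of that residual. You propose Young's inequality on $\|\tilde{e}_k^x+\delta_k(e_{gxy}^k-e_{gxz}^k)\|^2$ and assert the increment term lands at $80\,\delta_k^2\sigma_g^2$ via the step-size relation $\delta_k/\lambda_k=O(\beta_k)$. But $e_{gxy}^k$ and $e_{gxz}^k$ are \emph{accumulated} momentum errors, not single-sample noises: neither Assumption \ref{assumption:gradient_variance} nor the step-size rules in \eqref{eq:step_size_theorem_momentum} gives $\Exs[\|e_{gxy}^k-e_{gxz}^k\|^2]=O(\sigma_g^2)$, and the combined error $\tilde{e}_k^x$ tracked in the potential does not control the components individually (cancellation between $\tilde{h}_{fx}^k$ and $\lambda_k(\tilde{h}_{gxy}^k-\tilde{h}_{gxz}^k)$ is possible). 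Making your route rigorous would require running separate error recursions for $e_{gxy}^k$ and $e_{gxz}^k$ and adding them to the potential function, which is substantially heavier bookkeeping and would change the form of the lemma, or else re-grouping the decomposition, as the paper effectively does, so that no accumulated error ever appears with a $\delta_k$ weight. Moreover, absorbing the Young cross-term into the coefficient $(1+240\,l_{g,1}^2\xi^2\beta_k^2)$ forces the complementary factor $1+1/c$ with $c=O(l_{g,1}^2\xi^2\beta_k^2)$, which multiplies the uncontrolled quantity $\delta_k^2\,\Exs[\|e_{gxy}^k-e_{gxz}^k\|^2]$ by a large factor, so even the shape of the claimed bound would not survive that step as written.
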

Equipped with these lemmas, we can now proceed as previously in the main proof for Algorithm \ref{algo:algo_name}. 

\subsection{Descent Lemma for $z_k$ towards $y_k^*$}
\begin{lemma}
    \label{lemma:z_descent_1}
    If $\gamma_k \mu_g < 1/8$, then
    \begin{align*}
        \Exs[\|z_{k+1} - y_{k+1}^*\|^2 | \mF_k] &\le (1 + \gamma_k \mu_g / 4) \Exs[\|z_{k+1} - y_k^*\|^2 | \mF_k] \\
        &\quad + O\left( \frac{\xi^2 \alpha_k^2 l_{*,0}^2}{\gamma_k \mu_g} \right) \cdot (\Exs[\|q_k^x\|^2 | \mF_k] + \Exs[\|\tilde{e}_k^x\|^2 | \mF_k]). 
    \end{align*}
\end{lemma}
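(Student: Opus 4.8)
The plan is to mirror the structure of the proof of Lemma~\ref{lemma:z_contraction}, but to replace the refined cross-term estimate (which there relied on the smoothness of $y^*$ via Lemma~\ref{lemma:y_star_smoothness_bound}) by a plain Young's inequality, since in the momentum analysis we no longer exploit that smoothness. First I would expand, conditioning on $\mF_k$ and writing $y_k^* := y^*(x_k)$ and $y_{k+1}^* := y^*(x_{k+1})$,
\begin{align*}
    \|z_{k+1} - y_{k+1}^*\|^2 = \|z_{k+1} - y_k^*\|^2 + \|y_{k+1}^* - y_k^*\|^2 - 2\vdot{z_{k+1}-y_k^*}{y_{k+1}^* - y_k^*}.
\end{align*}
Here $z_{k+1}$ is determined before $x_{k+1}$ is formed in Algorithm~\ref{algo:algo_name2}, so the only randomness in $y_{k+1}^* - y_k^*$ enters through the update of $x_k$.

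Next I would bound the inner product by Young's inequality, $-2\vdot{u}{v} \le \rho\|u\|^2 + \rho^{-1}\|v\|^2$, with the deliberate choice $\rho = \gamma_k\mu_g/4$, giving
\begin{align*}
    \Exs[\|z_{k+1} - y_{k+1}^*\|^2 \mid \mF_k] \le \Big(1 + \tfrac{\gamma_k\mu_g}{4}\Big)\Exs[\|z_{k+1} - y_k^*\|^2 \mid \mF_k] + \Big(1 + \tfrac{4}{\gamma_k\mu_g}\Big)\Exs[\|y_{k+1}^* - y_k^*\|^2 \mid \mF_k].
\end{align*}
The coefficient $\gamma_k\mu_g/4$ left on the retained term is exactly the one appearing in the statement, which is what pins down the choice of $\rho$.

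Then I would insert the movement bound for $y^*$ from Lemma~\ref{lemma:y_star_contraction_momentum}, namely $\Exs[\|y_{k+1}^* - y_k^*\|^2 \mid \mF_k] \le 2\xi^2 l_{*,0}^2 \alpha_k^2\big(\Exs[\|q_k^x\|^2\mid\mF_k] + \Exs[\|\tilde{e}_k^x\|^2]\big)$. Using the hypothesis $\gamma_k\mu_g < 1/8$, we have $1 + 4/(\gamma_k\mu_g) \le 8/(\gamma_k\mu_g)$, so the second term is at most $\tfrac{16\xi^2 l_{*,0}^2\alpha_k^2}{\gamma_k\mu_g}\big(\Exs[\|q_k^x\|^2\mid\mF_k] + \Exs[\|\tilde{e}_k^x\|^2]\big)$, which is precisely the $O\big(\xi^2\alpha_k^2 l_{*,0}^2/(\gamma_k\mu_g)\big)$ contribution claimed.

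The step that carries the most weight is simply aligning the Young parameter $\rho$ with the target contraction factor $\gamma_k\mu_g/4$ and then absorbing the $+1$ in $1 + \rho^{-1}$ through $\gamma_k\mu_g < 1/8$; I do not anticipate a genuine obstacle beyond this bookkeeping, precisely because dropping the smoothness of $y^*$ lets us bypass the delicate split of the cross term that was required in Lemma~\ref{lemma:z_contraction}.
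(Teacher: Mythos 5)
Your proof is correct and follows essentially the same route as the paper's: the same three-term decomposition of $\|z_{k+1}-y_{k+1}^*\|^2$, Young's inequality on the cross term, and Lemma~\ref{lemma:y_star_contraction_momentum} to absorb $\|y_{k+1}^*-y_k^*\|^2$, exactly as the momentum analysis intends (no smoothness of $y^*$ needed). If anything, your choice $\rho=\gamma_k\mu_g/4$ reproduces the stated coefficient $(1+\gamma_k\mu_g/4)$ more faithfully than the paper's displayed intermediate bound, whose constants ($4\gamma_k\mu_g$ on the retained term) appear to be a typo relative to the lemma statement.
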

\begin{proof}
    As before, we can decompose $\|z_{k+1} - y_{k+1}^*\|^2$ as
    \begin{align*}
        \|z_{k+1} - y_{k+1}^*\|^2 &= \|z_{k+1} - y_k^*\|^2 + \|y_{k+1}^* - y_k^*\|^2 - 2\vdot{z_{k+1}-y_k^*}{y_{k+1}^* - y_k^*} \\
        &\le \|z_{k+1} - y_k^*\|^2 + \left(1 + \frac{1}{8\gamma_k \mu_g} \right) \|y_{k+1}^* - y_k^*\|^2 + 4\gamma_k \mu_g \|z_{k+1}-y_k^*\|^2,
    \end{align*}
    where we used a general inequality $|\vdot{a}{b}| \le c\|a\|^2 + \frac{1}{4c} \|b\|^2$.
    We can apply Lemma \ref{lemma:y_star_contraction_momentum} for $\|y_{k+1}^* - y_k^*\|^2$, yielding the lemma. 
\end{proof}

\begin{lemma}
    \label{lemma:z_descent_2}
    If $\gamma_k \le 1 / (16 l_{g,1})$, then
    \begin{align*}
        \Exs[\|z_{k+1} - y_{k}^*\|^2 | \mF_k] &\le (1 - \gamma_k \mu_g / 2) \Exs[\|z_{k} - y_k^*\|^2 | \mF_k] - \frac{\gamma_k}{l_{g,1}} \|q_k^z\|^2 + O\left(\frac{\gamma_k}{\mu_g} \right) \Exs[\|\tilde{e}_k^z\|^2 | \mF_k]). 
    \end{align*}
\end{lemma}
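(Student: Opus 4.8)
The plan is to mirror the one-step contraction argument of Lemma~\ref{lem:z_intermediate_contraction}, adapted to the momentum setting with $T=1$, where the descent direction now carries the (non-zero-mean) error $\tilde{e}_k^z$. Writing $g_k(\cdot) := g(x_k,\cdot)$, the $z$-update of Algorithm~\ref{algo:algo_name2} is $z_{k+1} = z_k - \gamma_k \tilde{h}_z^k$, and by \eqref{eq:qk_def} and \eqref{eq:def_ek} we have $\tilde{h}_z^k = q_k^z + \tilde{e}_k^z$ with $q_k^z = \grad g_k(z_k)$. Since $y_k^*$ minimizes $g_k$, we have $\grad g_k(y_k^*)=0$, so Assumptions~\ref{assumption:nice_functions} and \ref{assumption:extra_nice_g} yield both $\mu_g$-strong convexity and $l_{g,1}$-co-coercivity of $g_k$ relative to $y_k^*$, namely $\langle q_k^z, z_k-y_k^*\rangle \ge \mu_g\|z_k-y_k^*\|^2$ and $\|q_k^z\|^2 \le l_{g,1}\langle q_k^z, z_k-y_k^*\rangle$.

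First I would expand $\|z_{k+1}-y_k^*\|^2 = \|z_k-y_k^*\|^2 - 2\gamma_k\langle q_k^z+\tilde{e}_k^z, z_k-y_k^*\rangle + \gamma_k^2\|q_k^z+\tilde{e}_k^z\|^2$ and immediately peel off the error. The error cross-term is handled by Young's inequality, $-2\gamma_k\langle \tilde{e}_k^z, z_k-y_k^*\rangle \le \tfrac{\gamma_k\mu_g}{4}\|z_k-y_k^*\|^2 + \tfrac{4\gamma_k}{\mu_g}\|\tilde{e}_k^z\|^2$, and the quadratic is split crudely via $\gamma_k^2\|q_k^z+\tilde{e}_k^z\|^2 \le 2\gamma_k^2\|q_k^z\|^2 + 2\gamma_k^2\|\tilde{e}_k^z\|^2$. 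Using $\gamma_k \le 1/(16 l_{g,1})$ together with co-coercivity to absorb $2\gamma_k^2\|q_k^z\|^2$ into the inner product, the gradient part collapses to $-2\gamma_k(1-\gamma_k l_{g,1})\langle q_k^z, z_k-y_k^*\rangle \le -\tfrac{15}{8}\gamma_k\langle q_k^z, z_k-y_k^*\rangle$.

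The key bookkeeping step is to divide this surviving budget $-\tfrac{15}{8}\gamma_k\langle q_k^z, z_k-y_k^*\rangle$ between two purposes: spending $-\gamma_k\langle q_k^z, z_k-y_k^*\rangle \le -\tfrac{\gamma_k}{l_{g,1}}\|q_k^z\|^2$ to produce the explicit negative gradient term demanded by the statement, and devoting the remainder $-\tfrac{7}{8}\gamma_k\langle q_k^z, z_k-y_k^*\rangle \le -\tfrac{7}{8}\gamma_k\mu_g\|z_k-y_k^*\|^2$ to strong-convexity contraction. Combined with the $\tfrac{\gamma_k\mu_g}{4}\|z_k-y_k^*\|^2$ penalty from the error cross-term, the coefficient of $\|z_k-y_k^*\|^2$ becomes $1 - \tfrac{5}{8}\gamma_k\mu_g \le 1 - \gamma_k\mu_g/2$, while $\gamma_k^2 \le \gamma_k/(16\mu_g)$ lets the residual error terms collapse to $O(\gamma_k/\mu_g)\|\tilde{e}_k^z\|^2$. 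All manipulations hold pointwise, so taking $\Exs[\cdot\,|\,\mF_k]$ (under which $z_k$, $y_k^*$, and $q_k^z$ are measurable) gives the claim. I expect the main obstacle to be purely this constant-tracking in the three-way split — ensuring the single co-coercivity factor is simultaneously large enough to cancel the quadratic, to leave the full $\tfrac{1}{l_{g,1}}$-weighted gradient term, and to retain a net $\tfrac{\mu_g}{2}$ contraction — rather than any conceptual difficulty; in particular the non-martingale nature of $\tilde{e}_k^z$ causes no trouble, since it is dispatched by a pointwise Young's inequality instead of an unbiasedness argument.
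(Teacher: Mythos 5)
Your proposal is correct and follows essentially the same route as the paper's proof: the same one-step expansion with $\tilde h_z^k = q_k^z + \tilde e_k^z$, the same pointwise Young's inequality $-2\gamma_k\langle \tilde e_k^z, z_k - y_k^*\rangle \le \tfrac{\gamma_k\mu_g}{4}\|z_k-y_k^*\|^2 + \tfrac{4\gamma_k}{\mu_g}\|\tilde e_k^z\|^2$ for the non-martingale error, and the same coercivity/co-coercivity budget split of $\langle q_k^z, z_k-y_k^*\rangle$ to simultaneously absorb the $2\gamma_k^2\|q_k^z\|^2$ term, extract $-\tfrac{\gamma_k}{l_{g,1}}\|q_k^z\|^2$, and retain the $1-\gamma_k\mu_g/2$ contraction. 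The only differences are immaterial constant-tracking choices (your $1-\tfrac{5}{8}\gamma_k\mu_g$ versus the paper's $1-\tfrac{3}{4}\gamma_k\mu_g$ before adding the Young penalty), so the argument matches the paper's.
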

\begin{proof}
    Note that
    \begin{align*}
        \|z_{k+1} - y_k^*\|^2 &= \|z_k - y_k^*\|^2 + \gamma_k^2 \|\tilde{h}_z^k\|^2 - 2\gamma_k \vdot{\tilde{h}_z^k}{z_k - y_k^*} \\
        &\le \|z_k - y_k^*\|^2 + 2\gamma_k^2 (\|q_k^z\|^2 + \|\tilde{e}_k^z\|^2) - 2\gamma_k \vdot{q_k^z}{z_k - y_k^*} - 2\gamma_k \vdot{\tilde{e}_k^z}{z_k - y_k^*}.
    \end{align*}
    Since $q_k^z = \grad_y g(x_k, z_k)$ by definition, by coercivity and co-coercivity of strongly-convex functions, we have
    \begin{align*}
        \left( \mu_g \|z_k - y_k^*\|^2, \frac{1}{l_{g,1}}\|q_k^z\|^2 \right) \le \vdot{q_k^z}{z_k - y_k^*},  
    \end{align*}
    and thus, given $\gamma_k \le 1/(16l_{g,1})$, we have
    \begin{align*}
        \Exs[z_{k+1} - y_k^*\|^2 | \mF_k] \le (1 - 3 \gamma_k \mu_g/4) \Exs[\|z_k - y_k^*\|^2 | \mF_k] - \frac{\gamma_k}{l_{g,1}} \|q_k^z\|^2 + 2\gamma_k^2 \Exs[\|\tilde{e}_k^z\|^2 | \mF_k] - 2\gamma_k \vdot{\tilde{e}_k^z}{z_k - y_k^*}.
    \end{align*}
    Finally, we can use general inequality $|\vdot{a}{b}| \le c\|a\|^2 + \frac{1}{4c} \|b\|^2$ to get
    \begin{align*}
        -2\gamma_k \vdot{\tilde{e}_k^z}{z_k - y_k^*} \le \frac{\gamma_k \mu_g}{4} \|z_k - y_k^*\|^2 + \frac{4\gamma_k}{\mu_g} \|\tilde{e}_k^z\|^2. 
    \end{align*}
    Plugging this back, with $\gamma_k^2 \ll \frac{\gamma_k}{\mu_g}$, we get the lemma. 
\end{proof}

\subsection{Descent Lemma for $y_k$ towards $y_{\lambda,k}^*$}
\begin{lemma}
    \label{lemma:y_descent_1}
    If $\beta_k \mu_g < 1/8$, then
    \begin{align*}
        \Exs[\|y_{k+1} - y_{\lambda, k+1}^*\|^2 | \mF_k] &\le (1 + \beta_k \mu_g / 4) \Exs[\|y_{k+1} - y_{\lambda,k}^*\|^2 | \mF_k] \\
        &\quad + O\left( \frac{\xi^2 \alpha_k^2 l_{*,0}^2}{\beta_k \mu_g} \right) \cdot (\Exs[\|q_k^x\|^2 | \mF_k] + \Exs[\|\tilde{e}_k^x\|^2 | \mF_k]) + O\left(\frac{\delta_k^2 l_{f,0}^2}{\lambda_k^4 \mu_g^2} \right). 
    \end{align*}
\end{lemma}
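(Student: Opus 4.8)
The plan is to follow the argument of Lemma~\ref{lemma:z_descent_1} almost verbatim, with the $z$-iterate replaced by $y_{k+1}$ and the moving target $y^*(x)$ replaced by the Lagrangian minimizer $y_\lambda^*(x)$. I would first expand around the intermediate point $y_{\lambda,k}^* = y_{\lambda_k}^*(x_k)$:
\begin{align*}
    \|y_{k+1} - y_{\lambda,k+1}^*\|^2 &= \|y_{k+1} - y_{\lambda,k}^*\|^2 + \|y_{\lambda,k+1}^* - y_{\lambda,k}^*\|^2 \\
    &\quad - 2\vdot{y_{k+1} - y_{\lambda,k}^*}{y_{\lambda,k+1}^* - y_{\lambda,k}^*}.
\end{align*}
The first term is exactly the quantity that appears with coefficient $(1+\beta_k\mu_g/4)$ on the right-hand side of the claim, so the task reduces to absorbing the remaining two terms into the stated error terms.

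For the cross term I would apply the elementary bound $|\vdot{a}{b}| \le c\|a\|^2 + \tfrac{1}{4c}\|b\|^2$ with $a = y_{k+1} - y_{\lambda,k}^*$ and $b = y_{\lambda,k+1}^* - y_{\lambda,k}^*$, choosing the free parameter proportional to $\beta_k\mu_g$ so that the self-term contributes precisely the multiplicative factor $\beta_k\mu_g/4$ to $\|y_{k+1}-y_{\lambda,k}^*\|^2$; the hypothesis $\beta_k\mu_g<1/8$ is used only to keep this factor well below $1$. The same choice produces a coefficient of order $1/(\beta_k\mu_g)$ in front of $\|y_{\lambda,k+1}^* - y_{\lambda,k}^*\|^2$. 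Everything then reduces to controlling $\Exs[\|y_{\lambda,k+1}^* - y_{\lambda,k}^*\|^2 \mid \mF_k]$, for which I invoke Lemma~\ref{lemma:y_star_lambda_contraction_momentum}; multiplying its bound by the $O(1/(\beta_k\mu_g))$ coefficient delivers the $O(\xi^2\alpha_k^2 l_{*,0}^2/(\beta_k\mu_g))$ weight on $\Exs[\|q_k^x\|^2]+\Exs[\|\tilde{e}_k^x\|^2]$ together with the residual bias term.

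The single point of difference from the $z$-case, and the only place that needs care, is the multiplier-drift bias: unlike $y^*(x)$, the target $y_\lambda^*(x)$ shifts both because $x_k$ is updated \emph{and} because $\lambda_k$ grows to $\lambda_{k+1}$, which is exactly why Lemma~\ref{lemma:y_star_lambda_contraction_momentum} carries the extra additive term $8\delta_k^2 l_{f,0}^2/(\lambda_k^4\mu_g^2)$ that is absent from Lemma~\ref{lemma:y_star_contraction_momentum}. This bias is the one term whose bookkeeping is delicate, since propagating it through the $O(1/(\beta_k\mu_g))$ amplification and collapsing it back to the recorded order $O(\delta_k^2 l_{f,0}^2/(\lambda_k^4\mu_g^2))$ relies on the step-size relation $\delta_k/\lambda_k \le \mu_g\beta_k/8$ from \eqref{eq:step_size_theorem_momentum_a} to absorb the spurious $\beta_k^{-1}$ factor. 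I anticipate no genuine analytic difficulty here, only careful tracking of constants, since the substantive estimate on the drift of $y_\lambda^*$ under a simultaneous update of $x$ and $\lambda$ was already discharged in Lemma~\ref{lemma:y_star_lambda_contraction_momentum}.
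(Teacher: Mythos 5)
Your proposal is correct and is essentially the paper's own proof: the same three-term expansion of $\|y_{k+1}-y_{\lambda,k+1}^*\|^2$ around the intermediate point $y_{\lambda,k}^*$, the same elementary step $|\vdot{a}{b}|\le c\|a\|^2+\frac{1}{4c}\|b\|^2$ with $c\asymp\beta_k\mu_g$ (using $\beta_k\mu_g<1/8$ only to keep the self-term small), and the same invocation of Lemma~\ref{lemma:y_star_lambda_contraction_momentum} to control the drift of the target under the simultaneous update of $x_k$ and $\lambda_k$. The one caveat you flag is real but equally present in the paper: after the $O\bigl(1/(\beta_k\mu_g)\bigr)$ amplification the multiplier-drift bias is strictly $O\bigl(\delta_k^2 l_{f,0}^2/(\beta_k\mu_g^3\lambda_k^4)\bigr)$, which the rule $\delta_k/\lambda_k\le\mu_g\beta_k/8$ tames to $O\bigl(\delta_k l_{f,0}^2/(\mu_g^2\lambda_k^3)\bigr)$ rather than exactly the recorded $O\bigl(\delta_k^2 l_{f,0}^2/(\lambda_k^4\mu_g^2)\bigr)$ --- a harmless looseness in constants that the paper's proof also glosses over and that does not affect the downstream potential-function argument.
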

\begin{proof}
    As before, we can decompose $\|y_{k+1} - y_{\lambda, k+1}^*\|^2$ as
    \begin{align*}
        \|y_{k+1} - y_{\lambda,k+1}^*\|^2 &= \|y_{k+1} - y_{\lambda,k}^*\|^2 + \|y_{k+1}^* - y_{\lambda,k}^*\|^2 - 2\vdot{y_{k+1}-y_{\lambda,k}^*}{y_{\lambda,k+1}^* - y_{\lambda,k}^*} \\
        &\le \|y_{k+1} - y_{\lambda,k}^*\|^2 + \left(1 + \frac{1}{8\beta_k \mu_g} \right) \|y_{\lambda,k+1}^* - y_{\lambda,k}^*\|^2 + 4\beta_k \mu_g \|y_{k+1}-y_{\lambda,k}^*\|^2,
    \end{align*}
    where we used a general inequality $|\vdot{a}{b}| \le c\|a\|^2 + \frac{1}{4c} \|b\|^2$.
    We can apply Lemma \ref{lemma:y_star_lambda_contraction_momentum} for $\|y_{\lambda,k+1}^* - y_{\lambda,k}^*\|^2$, 
    since $\beta_k \mu_g \le 1/16$, we get the lemma.
\end{proof}

\begin{lemma}
    \label{lemma:y_descent_2}
    If $\beta_k \le 1 / (16 l_{g,1})$, then
    \begin{align*}
        \Exs[\|y_{k+1} - y_{\lambda,k}^*\|^2 | \mF_k] &\le (1 - \beta_k \mu_g / 2) \Exs[\|y_{k} - y_{\lambda,k}^*\|^2 | \mF_k] - \frac{\alpha_k}{\lambda_k l_{g,1}} \|q_k^y\|^2 + O\left(\frac{\alpha_k^2}{\mu_g \beta_k} \right) \Exs[\|\tilde{e}_k^y\|^2 | \mF_k]). 
    \end{align*}
\end{lemma}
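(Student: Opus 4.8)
The plan is to read the $y$-update of Algorithm~\ref{algo:algo_name2} as a single (momentum-)noisy gradient step on the function $G_k(y):=\mL_{\lambda_k}(x_k,y)$, whose minimizer is exactly $y_{\lambda,k}^*$ and whose gradient at $y_k$ is $\grad G_k(y_k)=q_k^y$. Because $\lambda_k\ge 2l_{f,1}/\mu_g$, Assumptions~\ref{assumption:nice_functions} and~\ref{assumption:extra_nice_g} make $G_k$ simultaneously $(\lambda_k\mu_g/2)$-strongly convex and $(l_{f,1}+\lambda_k l_{g,1})$-smooth in $y$ (and $l_{f,1}+\lambda_k l_{g,1}\le 2\lambda_k l_{g,1}$, since $l_{f,1}\le\lambda_k l_{g,1}$). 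Writing the update direction as $q_k^y+\tilde{e}_k^y$ and expanding the square,
\[
\|y_{k+1}-y_{\lambda,k}^*\|^2=\|y_k-y_{\lambda,k}^*\|^2+\alpha_k^2\|q_k^y+\tilde{e}_k^y\|^2-2\alpha_k\vdot{q_k^y+\tilde{e}_k^y}{y_k-y_{\lambda,k}^*},
\]
which I would bound pointwise (before taking $\Exs[\cdot\mid\mF_k]$), mirroring the proof of Lemma~\ref{lemma:z_descent_2} but with the $\lambda_k$-scaled conditioning of $G_k$ in place of the unscaled $g_k$.

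For the deterministic part I would invoke coercivity and co-coercivity of $G_k$: from $q_k^y=\grad G_k(y_k)$ and $\grad G_k(y_{\lambda,k}^*)=0$ one has both $\vdot{q_k^y}{y_k-y_{\lambda,k}^*}\ge\frac{\lambda_k\mu_g}{2}\|y_k-y_{\lambda,k}^*\|^2$ and $\vdot{q_k^y}{y_k-y_{\lambda,k}^*}\ge\frac{1}{l_{f,1}+\lambda_k l_{g,1}}\|q_k^y\|^2$. Using these as a convex combination inside $-2\alpha_k\vdot{q_k^y}{\cdot}$, with weight $3/4$ on the coercive bound, yields a contraction factor $(1-\tfrac34\beta_k\mu_g)$ — here $\beta_k=\alpha_k\lambda_k$ is the \emph{effective} step for $y$ — together with a negative term of order $-\Theta\big(\tfrac{\alpha_k}{\lambda_k l_{g,1}}\big)\|q_k^y\|^2$. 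The step-size rule $\beta_k\le 1/(16 l_{g,1})$ enters precisely here: it guarantees that the positive $2\alpha_k^2\|q_k^y\|^2$ arising from $\|q_k^y+\tilde{e}_k^y\|^2\le 2(\|q_k^y\|^2+\|\tilde{e}_k^y\|^2)$ is dominated by the co-coercive term, leaving the stated $-\frac{\alpha_k}{\lambda_k l_{g,1}}\|q_k^y\|^2$ (the absolute constant is immaterial, since downstream this term only serves to cancel stray $\|q_k^y\|^2$ contributions).

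The genuinely stochastic contribution is the cross term $-2\alpha_k\vdot{\tilde{e}_k^y}{y_k-y_{\lambda,k}^*}$. Since the momentum estimator is biased, $\tilde{e}_k^y$ is not conditionally mean-zero and the term cannot be killed by expectation; instead I would absorb it with Young's inequality $|\vdot{a}{b}|\le c\|a\|^2+\tfrac{1}{4c}\|b\|^2$, taking $c=\beta_k\mu_g/(8\alpha_k)$ so that the $\|y_k-y_{\lambda,k}^*\|^2$ share equals $\tfrac{\beta_k\mu_g}{4}$. Adding this back upgrades the contraction from $(1-\tfrac34\beta_k\mu_g)$ to $(1-\tfrac12\beta_k\mu_g)$, and the residual $\|\tilde{e}_k^y\|^2$ coefficient becomes $\tfrac{4\alpha_k^2}{\beta_k\mu_g}+2\alpha_k^2=O\big(\tfrac{\alpha_k^2}{\mu_g\beta_k}\big)$ (the first term dominates because $\beta_k\mu_g\le 1$). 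Taking $\Exs[\cdot\mid\mF_k]$ and using that $y_k,\,y_{\lambda,k}^*,\,q_k^y$ are $\mF_k$-measurable then produces the claimed bound, with $\Exs[\|\tilde{e}_k^y\|^2\mid\mF_k]$ left to be controlled separately by Lemma~\ref{lemma:descent_stochastic_noise_yz}.

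The main obstacle is purely bookkeeping of the $\lambda_k$-scaling, which is where this argument genuinely differs from the $z$-descent lemma: the conditioning of $G_k$ grows linearly in $\lambda_k$, so contraction happens at rate $\beta_k\mu_g=\alpha_k\lambda_k\mu_g$ (not $\alpha_k\mu_g$), the gradient discount carries the factor $1/(\lambda_k l_{g,1})$, and the noise amplification is $\alpha_k^2/(\mu_g\beta_k)$. Getting all three prefactors simultaneously correct — while checking that the single condition $\beta_k\le 1/(16 l_{g,1})$ is strong enough to control $2\alpha_k^2\|q_k^y\|^2$ against the co-coercive term and keep $2\alpha_k^2$ below $O(\alpha_k^2/(\mu_g\beta_k))$ — is the only step requiring care; everything else is a direct transcription of the strongly-convex descent computation.
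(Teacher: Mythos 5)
Your proposal is correct and follows essentially the same route as the paper's proof: the same one-step expansion with $y_{k+1}-y_k=-\alpha_k(q_k^y+\tilde{e}_k^y)$, the same coercivity/co-coercivity bound for the strongly convex, smooth $\mL_{\lambda_k}(x_k,\cdot)$ (splitting the inner product to get the $(1-\tfrac{3}{4}\beta_k\mu_g)$ contraction plus the negative $\|q_k^y\|^2$ term), and the identical Young's-inequality absorption of the biased cross term with $c=\beta_k\mu_g/(8\alpha_k)$, yielding $\tfrac{\beta_k\mu_g}{4}\|y_k-y_{\lambda,k}^*\|^2+\tfrac{4\alpha_k^2}{\beta_k\mu_g}\|\tilde{e}_k^y\|^2$ exactly as in the paper. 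Your remark that the absolute constant in front of $-\frac{\alpha_k}{\lambda_k l_{g,1}}\|q_k^y\|^2$ is immaterial is also apt, since the paper's own bookkeeping under $\beta_k\le 1/(16l_{g,1})$ is loose about that constant in the same way.
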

\begin{proof}
    Note that
    \begin{align*}
        \|y_{k+1} - y_{\lambda,k}^*\|^2 &= \|y_k - y_{\lambda,k}^*\|^2 + 2 \alpha_k^2 (\|q_k^y\|^2 + \|\tilde{e}_k^y\|^2) - 2\alpha_k \vdot{q_k^y}{y_k - y_{\lambda,k}^*} - 2\alpha_k \vdot{\tilde{e}_k^y}{y_k - y_{\lambda,k}^*},
    \end{align*}
    where we used $y_{k+1} - y_k = q_k^y + \tilde{e}_k^y$. Since $q_k^y = \grad_y \mL_{\lambda_k}$ by definition, again by coercivity and co-coercivity of strongly-convex $\mL_{\lambda_k}(x_k, \cdot)$, we have
    \begin{align*}
        \max \left( \frac{\lambda_k \mu_g}{2} \|y_k - y_{\lambda,k}^*\|^2, \frac{1}{l_{f,1} + \lambda_k l_{g,1}} \|q_k^y\|^2 \right) \le \vdot{q_k^y}{y_k - y_{\lambda,k}^*},  
    \end{align*}
    and thus, given $\alpha_k \lambda_k \le 1/(16 l_{g,1})$ and $l_{f,1} \le \lambda_k l_{g,1}$, we have
    \begin{align*}
        \Exs[y_{k+1} - y_{\lambda,k}^*\|^2 | \mF_k] &\le (1 - 3 \beta_k \mu_g/4) \Exs[\|z_k - y_k^*\|^2 | \mF_k] - \frac{\alpha_k}{\lambda_k l_{g,1}} \|q_k^y\|^2 \\
        &\quad + 2\alpha_k^2 \Exs[\|\tilde{e}_k^y\|^2 | \mF_k] - 2\alpha_k \Exs[\vdot{\tilde{e}_k^y}{y_k - y_{\lambda,k}^*} | \mF_k].
    \end{align*}
    Finally, we can use general inequality $|\vdot{a}{b}| \le c\|a\|^2 + \frac{1}{4c} \|b\|^2$ to get
    \begin{align*}
        -2\alpha_k \vdot{\tilde{e}_k^y}{y_k - y_{\lambda,k}^*} \le \frac{\beta_k \mu_g}{4} \|y_k - y_{\lambda,k}^*\|^2 + \frac{4\alpha_k^2}{\beta_k \mu_g} \|\tilde{e}_k^y\|^2. 
    \end{align*}
    Plugging this back, with $\beta_k \mu_g \ll 1$, we get the lemma. 
\end{proof}

\subsection{Descent Lemma for $F(x_{k})$}
\begin{lemma}
    If $\xi \alpha_k l_{F,1} < 1$, then
    \begin{align*}
        \Exs[F(x_{k+1}) - F(x_k) | \mF_k] &\le -\frac{\xi \alpha_k}{4} \|\grad F(x_k)\|^2 - \frac{\xi \alpha_k}{4} \Exs[\|q_k^x\|^2| \mF_k] + 2 \xi \alpha_k \cdot \Exs[\|\tilde{e}_k^x\|^2 | \mF_k] \\
        &\quad + \frac{3 \xi \alpha_k}{2} \left(4 l_{g,1}^2 \lambda_k^2 \|y_{k+1} - y_{\lambda,k}^*\|^2 + l_{g,1}^2 \lambda_k^2 \|z_{k+1} - y_k^*\|^2 + C_\lambda^2 / \lambda_k^2 \right). 
    \end{align*}
\end{lemma}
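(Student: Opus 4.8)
The plan is to mirror the descent estimate of Proposition~\ref{prop:g1}, adjusting for the fact that with momentum the moving direction is biased. First I would invoke the smoothness of $F$ (Lemma~\ref{lemma:outer_F_smooth}):
\[
\Exs[F(x_{k+1}) - F(x_k)\mid\mF_k] \le \Exs\big[\vdot{\grad F(x_k)}{x_{k+1}-x_k} + \tfrac{l_{F,1}}{2}\|x_{k+1}-x_k\|^2 \;\big|\;\mF_k\big].
\]
By the update rule of Algorithm~\ref{algo:algo_name2} together with the definition of $\tilde e_k^x$ in \eqref{eq:def_ek}, we have $x_{k+1}-x_k = -\xi\alpha_k(q_k^x + \tilde e_k^x)$, so the linear term splits into $-\xi\alpha_k\vdot{\grad F(x_k)}{q_k^x} - \xi\alpha_k\vdot{\grad F(x_k)}{\tilde e_k^x}$.

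The essential difference from Proposition~\ref{prop:g1} is that $\tilde e_k^x$ is \emph{not} conditionally mean-zero: the momentum estimators accumulate history, so $\Exs[\tilde e_k^x\mid\mF_k]\neq 0$ and the cross term can no longer be killed by a martingale argument. This is the main obstacle, and I would resolve it with Young's inequality $|\vdot{a}{b}|\le \tfrac14\|a\|^2 + \|b\|^2$, giving $-\xi\alpha_k\vdot{\grad F(x_k)}{\tilde e_k^x}\le \tfrac{\xi\alpha_k}{4}\|\grad F(x_k)\|^2 + \xi\alpha_k\|\tilde e_k^x\|^2$. This is precisely why the coefficient of $\|\grad F(x_k)\|^2$ degrades from $-\tfrac12$ (as in Proposition~\ref{prop:g1}) to $-\tfrac14$, and why an explicit $\Exs[\|\tilde e_k^x\|^2]$ term surfaces; that residual $\tilde e_k^x$-mass is later reabsorbed by the variance descent estimates (Lemmas~\ref{lemma:descent_stochastic_noise_yz}--\ref{lemma:descent_stochastic_noise_x}).

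For the $q_k^x$ part I would use the polarization identity $-\vdot{\grad F(x_k)}{q_k^x} = -\tfrac12\big(\|\grad F(x_k)\|^2 + \|q_k^x\|^2 - \|\grad F(x_k) - q_k^x\|^2\big)$, and bound the quadratic term via $\tfrac{l_{F,1}}{2}\|x_{k+1}-x_k\|^2 \le l_{F,1}\xi^2\alpha_k^2(\|q_k^x\|^2 + \|\tilde e_k^x\|^2)$, using $\|a+b\|^2\le 2\|a\|^2 + 2\|b\|^2$. The step-size condition (that $\xi\alpha_k l_{F,1}$ is small, making $l_{F,1}\xi^2\alpha_k^2\le \tfrac{\xi\alpha_k}{4}$) then lets me collect the $\|q_k^x\|^2$ contributions as $-\tfrac{\xi\alpha_k}{2} + \tfrac{\xi\alpha_k}{4} = -\tfrac{\xi\alpha_k}{4}$ and the $\|\tilde e_k^x\|^2$ contributions as at most $\xi\alpha_k + \tfrac{\xi\alpha_k}{4}\le 2\xi\alpha_k$, matching the stated coefficients.

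Finally, for the residual $\|\grad F(x_k) - q_k^x\|$ I would reuse verbatim the triangle-inequality bound already derived as \eqref{eq:fxfx1}: Lemma~\ref{lemma:l_star_lambda_approximate} controls $\|\grad\mL_{\lambda_k}^*(x_k) - \grad F(x_k)\|\le C_\lambda/\lambda_k$, and the smoothness of $f$ and $g$ controls the remaining gradient differences, yielding $\|\grad F(x_k) - q_k^x\|\le 2l_{g,1}\lambda_k\|y_{k+1}-y_{\lambda,k}^*\| + l_{g,1}\lambda_k\|z_{k+1}-y_k^*\| + C_\lambda/\lambda_k$. Squaring with $(a+b+c)^2\le 3(a^2+b^2+c^2)$ and multiplying by $\tfrac{\xi\alpha_k}{2}$ produces exactly the term $\tfrac{3\xi\alpha_k}{2}\big(4l_{g,1}^2\lambda_k^2\|y_{k+1}-y_{\lambda,k}^*\|^2 + l_{g,1}^2\lambda_k^2\|z_{k+1}-y_k^*\|^2 + C_\lambda^2/\lambda_k^2\big)$, which closes the estimate. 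No new conceptual ingredient beyond Proposition~\ref{prop:g1} is needed other than the Young-inequality treatment of the momentum bias.
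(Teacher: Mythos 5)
Your proposal is correct and follows essentially the same route as the paper's own proof: smoothness of $F$, the split $x_{k+1}-x_k=-\xi\alpha_k(q_k^x+\tilde e_k^x)$, the polarization identity on the $q_k^x$ cross term, Young's inequality on the biased momentum error $\tilde e_k^x$ (degrading the $\|\grad F(x_k)\|^2$ coefficient from $-\tfrac12$ to $-\tfrac14$), and verbatim reuse of \eqref{eq:fxfx1} with $(a+b+c)^2\le 3(a^2+b^2+c^2)$. If anything, your bookkeeping is slightly more careful than the paper's: absorbing $l_{F,1}\xi^2\alpha_k^2\|q_k^x\|^2$ into $-\tfrac{\xi\alpha_k}{4}\|q_k^x\|^2$ genuinely requires $\xi\alpha_k l_{F,1}\le \tfrac14$ (as you note), whereas the paper's proof closes with the looser condition $\xi\alpha_k l_{F,1}<1$, which alone does not yield the stated negative coefficient.
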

\begin{proof}
    Using the smoothness of $F$, 
    \begin{align*}
        F(x_{k+1}) - F(x_k) &\le \vdot{\grad F(x_k)}{x_{k+1} - x_k} + \frac{l_{F,1}}{2} \|x_{k+1} - x_k\|^2 .
    \end{align*}
    Note that $x_{k+1} - x_k = \xi \alpha_k (q_k^x + \tilde{e}_k^x)$, and thus 
    \begin{align*}
        &F(x_{k+1}) - F(x_k) \le -\xi \alpha_k \vdot{\grad_x F(x_k)}{q_k^x} -\xi \alpha_k \vdot{\grad_x F(x_k)}{\tilde{e}_k^x} + \frac{l_{F,1}}{2} \|x_{k+1} - x_k\|^2  \\
        &\le -\frac{\xi \alpha_k}{2} (\|\grad F(x_k)\|^2 + \|q_k^x\|^2 - \| \grad F(x_k) - q_k^x \|^2) -\xi \alpha_k \vdot{\grad_x F(x_k)}{\tilde{e}_k^x} + \xi^2\alpha_k^2 l_{F,1} (\|q_k^x\|^2 + \| \tilde{e}_k^x\|^2).
    \end{align*}
    Using $|\vdot{a}{b}| \le c\|a\|^2 + \frac{1}{4c}\|b\|^2$, we have
    \begin{align*}
        -\xi\alpha_k \vdot{\grad F(x_k)}{\tilde{e}_k^x} \le \frac{\xi \alpha_k}{4} \|\grad_x F(x_k)\|^2 + \xi\alpha_k \|\tilde{e}_k^x\|^2.
    \end{align*}
    Finally, recall \eqref{eq:fxfx1}. Using $(a+b+c)^2 \le 3(a^2 + b^2 + c^2)$, we have
    \begin{align*}
        \|\grad F(x_k) - q_k^x\|^2 \le 3(4l_{g,1}^2\lambda_k^2 \|y_{k+1} - y_{\lambda,k}^*\|^2 + l_{g,1}^2 \lambda_k^2 \|z_{k+1} - y_k^*\|^2 + C_\lambda^2 / \lambda_k^2). 
    \end{align*}
    Combining all, with $\xi \alpha_k l_{F,1} < 1$, we get the lemma. 
\end{proof}

\subsection{Decrease in Potential Function}
Define the potential function $\mathbb{V}_k$ as the following:
\begin{align*}
    \mathbb{V}_k := F(x_k) + l_{g,1} \lambda_k \|y_k - y_{\lambda, k}^*\|^2 + \frac{l_{g,1} \lambda_k }{2} \|z_k - y_k^*\|^2 + \frac{1}{c_\eta l_{g,1}^2 \gamma_{k-1}} \left(\frac{\|\tilde{e}_k^x\|^2}{\lambda_k} + \frac{\|\tilde{e}_k^y\|^2}{\lambda_k} + \lambda_k \|\tilde{e}_k^z\|^2 \right),
\end{align*}
with some absolute constant $c_\eta > 0$. We bound the difference in potential function:
\begin{align*}
    \Exs[ \mathbb{V}_{k+1} - \mathbb{V}_k | \mF_k] &\le -\frac{\xi \alpha_k}{4} \|\grad F(x_k)\|^2 - \frac{\xi \alpha_k}{4} \Exs[\|q_k^x\|^2 | \mF_k] + \frac{\xi\alpha_k}{2} \frac{3 C_\lambda^2}{\lambda_k^2} \\
    &+ l_{g,1} \lambda_k \underbrace{\left( \left(1 + \frac{\delta_k}{\lambda_k} \right) \|y_{k+1} - y_{\lambda,k+1}^*\|^2 + 6 \xi\alpha_k\lambda_k l_{g,1} \|y_{k+1} - y_{\lambda,k}^*\|^2 - \|y_k - y_{\lambda,k}^*\|^2 \right)}_{(i)} \\
    &+ \frac{l_{g,1} \lambda_k}{2} \underbrace{\left( \left(1 + \frac{\delta_k}{\lambda_k} \right) \|z_{k+1} - y_{k+1}^*\|^2 + 3 \xi\alpha_k\lambda_k l_{g,1} \|z_{k+1} - z_{k}^*\|^2 - \|z_k - y_{k}^*\|^2 \right)}_{(ii)} \\
    &+ \frac{1}{c_\eta l_{g,1}^2 } \underbrace{\left( \frac{\Exs[\|\tilde{e}_{k+1}^x\|^2 | \mF_k]}{\gamma_k \lambda_k} - \frac{\Exs[ \|\tilde{e}_{k}^x\|^2 | \mF_k]}{\gamma_{k-1} \lambda_k} \right)}_{(iii)} + 2 \xi \alpha_k \Exs[\|\tilde{e}_k^x\|^2 | \mF_k]  \\
    &+ \frac{1}{c_\eta l_{g,1}^2} \underbrace{\left( \frac{\Exs[\|\tilde{e}_{k+1}^y\|^2 | \mF_k]}{\gamma_k \lambda_k} - \frac{\Exs[ \|\tilde{e}_{k}^y\|^2 | \mF_k]}{\gamma_{k-1} \lambda_k} \right)}_{(iv)} + \frac{\lambda_k}{c_\eta l_{g,1}^2} \underbrace{\left( \frac{\Exs[\|\tilde{e}_{k+1}^z\|^2 | \mF_k]}{\gamma_k} - \frac{\Exs[ \|\tilde{e}_{k}^z\|^2 | \mF_k]}{\gamma_{k-1}} \right)}_{(v)}. 
\end{align*}
Using Lemmas \ref{lemma:y_descent_1}, \ref{lemma:y_descent_2}, \ref{lemma:z_descent_1} and \ref{lemma:z_descent_2}, given that $\delta_k/\lambda_k < \mu_g \beta_k / 8$, $(i)$ and $(ii)$ are bounded by
\begin{align*}
    (i) &\le -\frac{\mu_g \beta_k}{ 8 }\|y_{k} - y_{\lambda,k}^*\|^2 - \frac{\alpha_k}{\lambda_k l_{g,1}} \|q_k^y\|^2 + O\left( \frac{\xi^2 \alpha_k^2 l_{*,0}^2 }{\beta_k \mu_g} \Exs[\|q_k^x\|^2 + \|\tilde{e}_k^x\|^2 | \mathcal{F}_k] + \frac{\delta_k^2 l_{f,0}^2 }{\lambda_k^4 \mu_g^3 } + \frac{\alpha_k^2}{\beta_k \mu_g} \Exs[\|\tilde{e}_k^y\|^2 | \mathcal{F}_k] \right), \\
    (ii) &\le -\frac{\mu_g \gamma_k}{ 8} \|z_{k} - y_k^*\|^2 - \frac{\gamma_k}{l_{g,1}} \|q_k^z\|^2 + O\left( \frac{\xi^2 \alpha_k^2 l_{*,0}^2 }{\gamma_k \mu_g} \Exs[\|q_k^x\|^2 + \|\tilde{e}_k^x\|^2 | \mathcal{F}_k] + \frac{\gamma_k}{\mu_g} \Exs[\|\tilde{e}_k^z\|^2 | \mathcal{F}_k] \right), 
\end{align*}
We can use Lemma \ref{lemma:descent_stochastic_noise_x} to bound $(iii), (iv)$ and $(v)$. Using the step-size condition given in \eqref{eq:step_size_theorem_momentum_b}, we have
\begin{align*}
    \frac{(1-\eta_{k+1} )}{\gamma_k} - \frac{1}{\gamma_{k-1}} &= \frac{\frac{\gamma_{k-1} - \gamma_{k}}{\gamma_{k-1}} - \eta_{k+1} }{\gamma_k} \le \frac{-\eta_{k+1}}{2 \gamma_k}. 
\end{align*}
Note that by the same step-size condition, $\eta_{k+1} \gg O(l_{g,1}^2 \gamma_k^2)$, and thus, 
\begin{align*}
    (iii) &\le -\frac{\eta_{k+1}}{2 \gamma_k \lambda_k} \Exs[\|\tilde{e}_k^x\|^2 | \mF_k] + O(\sigma_f^2) \cdot \frac{\eta_{k+1}^2}{\lambda_k \gamma_k} + O(\sigma_g^2) \cdot \left(\frac{\eta_{k+1}^2 \lambda_k}{\gamma_k} + \frac{\delta_k^2}{\gamma_k \lambda_k} \right) \\
    &\quad + O(l_{g,1}^2) \cdot \left( \xi^2 \alpha_k \|q_k^x\|^2 + \alpha_k(\|q_k^y\|^2 + \|\tilde{e}_k^y\|^2) + \gamma_k \lambda_k (\|q_k^z\|^2 + \|\tilde{e}_k^z\|^2) \right). 
\end{align*}
Similarly, we can use Lemma \ref{lemma:descent_stochastic_noise_yz} and show that
\begin{align*}
    (iv) &\le -\frac{\eta_{k+1}}{2 \gamma_k \lambda_k} \Exs[\|\tilde{e}_k^y\|^2 | \mF_k] + O(\sigma_f^2) \cdot \frac{\eta_{k+1}^2}{\lambda_k \gamma_k} + O(\sigma_g^2) \cdot \left(\frac{\eta_{k+1}^2 \lambda_k}{\gamma_k} + \frac{\delta_k^2}{\gamma_k \lambda_k} \right) \\
    &\quad + O(l_{g,1}^2) \alpha_k \cdot \left( \xi^2 \|q_k^x\|^2 + \xi^2 \|\tilde{e}_k^x\|^2 + \|q_k^y\|^2  \right), \\
    (v) &\le -\frac{\eta_{k+1}}{2 \gamma_k} \Exs[\|\tilde{e}_k^z\|^2 | \mF_k] + O(\sigma_g^2) \cdot \frac{\eta_{k+1}^2}{\gamma_k}  + O(l_{g,1}^2) \cdot \left( \frac{\xi^2 \alpha_k^2}{\gamma_k} \|q_k^x\|^2 + \frac{\xi^2 \alpha_k^2}{\gamma_k} \|\tilde{e}_k^x\|^2 + \gamma_k \|q_k^z\|^2  \right). 
\end{align*}

Plugging inequalities for $(i)-(v)$ back and arranging terms, we get
\begin{align*}
    \Exs[ \mathbb{V}_{k+1} - \mathbb{V}_k | \mF_k] &\le -\frac{\xi \alpha_k}{4} \|\grad F(x_k)\|^2 - \frac{\xi \alpha_k}{4} \Exs[\|q_k^x\|^2| \mF_k] + \frac{3 C_{\lambda}^2}{2} \frac{\xi \alpha_k}{\lambda_k^2} - l_{g,1}\lambda_k (i) + \frac{l_{g,1} \lambda_k}{2} (ii) \\
    &\quad + \frac{1}{c_\eta l_{g,1}^2} (iii) + 2\xi \alpha_k \Exs[\|\tilde{e}_k^x\|^2 | \mF_k] + \frac{1}{c_{\eta} l_{g,1}^2} (iv) + \frac{\lambda_k}{c_\eta l_{g,1}^2} (v) \\
    &\le -\frac{\xi \alpha_k}{4} \|\grad F(x_k)\|^2 - \frac{\lambda_k l_{g,1} \mu_g \beta_k}{4} \|y_k - y_{\lambda,k}^*\|^2 - \frac{\lambda_k l_{g,1} \mu_g \gamma_k}{4}\|z_k - y_k^*\|^2 \\
    &\quad - \frac{\xi \alpha_k}{4} \Exs[\|q_k^x\|^2 | \mF_k] \left(1 - O(\xi l_{g,1} l_{*,0}^2 / 
    \mu_g) - O(\xi c_{\eta}^{-1}) \right) \\
    &\quad - \alpha_k \Exs[\|q_k^y\|^2 |\mF_k] \left(1 - O(c_{\eta}^{-1}) \right) - \frac{\gamma_k \lambda_k}{2} \Exs[\|q_k^z\|^2 |\mF_k] \left(1 - O(c_{\eta}^{-1}) \right) \\
    &\quad + O\left( \frac{C_\lambda^2 \xi \alpha_k}{\lambda_k^2} + \frac{l_{f,0}^2 l_{g,1} \delta_k^2}{\mu_g^3 \lambda_k^3} \right) + \text{noise variance terms},
\end{align*}
where noise variance terms are
\begin{align*}
    \text{noise terms} &= -\frac{\Exs[\|\tilde{e}_k^x\|^2 | \mF_k]}{c_{\eta} l_{g,1}^2} \left( \frac{\eta_{k+1}}{2\gamma_k\lambda_k} - O\left(l_{g,1}^2 \xi^2 \alpha_k \right) - (c_\eta \xi \alpha_k l_{g,1}^2) \right) \\
    &\quad - \frac{\Exs[\|\tilde{e}_k^y\|^2 | \mF_k]}{l_{g,1}^2 c_{\eta}} \left( \frac{\eta_{k+1}}{2\gamma_k \lambda_k} - O(l_{g,1}^2) \alpha_k - O(c_\eta l_{g,1}^3 / \mu_g) \alpha_k \right) \\
    &\quad - \frac{\lambda_k \Exs[\|\tilde{e}_k^z\|^2 | \mF_k]}{l_{g,1}^2 c_{\eta}} \left( \frac{\eta_{k+1}}{2\gamma_k} - O(l_{g,1}^2) \gamma_k - O(c_\eta l_{g,1}^3 / \mu_g) \gamma_k \right) \\
    &\quad + \frac{1}{c_\eta l_{g,1}^2} 
    \left( O(\sigma_f^2) \cdot \frac{\eta_{k+1}^2}{\lambda_k \gamma_k} + O(\sigma_g^2) \cdot \left( \frac{\eta_{k+1}^2 \lambda_k}{\gamma_k} + \frac{\delta_k^2}{\gamma_k \lambda_k} \right) \right). 
\end{align*}
For the all squared terms, with careful design of step-sizes, we can make the coefficient negative. Specifically, we need
\begin{align*}
    &\xi l_{g,1} l_{*,0}^2 / \mu_g \ll 1, \ c_\eta \gg 1,
\end{align*}
to negate $q_k^{(\cdot)}$ terms, and 
\begin{align*}
    1 > \eta_{k+1} \gg  c_{\eta} \gamma_k^2 (l_{g,1}^3/\mu_g),
\end{align*}
to suppress noise variance terms, as required in our step-size rules \eqref{eq:step_size_theorem_momentum}. Then, we can simplify the bound for the potential function difference:
\begin{align*}
    \Exs[\mathbb{V}_{k+1} - \mathbb{V}_k | \mF_k] &\le -\frac{\xi \alpha_k}{4} \|\grad F(x_k)\|^2 + O(\xi C_\lambda^2) \cdot \frac{\alpha_k}{\lambda_k^{2}} + O(l_{f,0}^2 l_{g,1}/\mu_g^3) \cdot \frac{\delta_k^2}{ \lambda_k^{3}} \\
    &\quad + \frac{1}{c_\eta l_{g,1}^2} \left( O(\sigma_f^2) \cdot \frac{\eta_{k+1}^2}{\lambda_k \gamma_k} + O(\sigma_g^2) \cdot \left( \frac{\eta_{k+1}^2 \lambda_k}{\gamma_k} + \frac{\delta_k^2}{\gamma_k \lambda_k} \right) \right).
\end{align*}

\paragraph{Proof of Theorem \ref{theorem:general_momentum}}
Summing the above over all $k = 0$ to $K-1$, using $\delta_k/ \lambda_k = O(1/k)$ and $1/\lambda_k, \delta_k / \gamma_k = o(1)$, we obtain Theorem \ref{theorem:general_momentum}.

\subsection{Proof of Corollary \ref{corollary:non_convex_momentum}}
Using the step-sizes specified in \eqref{eq:step_size_momentum_detail}, since $\lambda_k = \gamma_k / 2\alpha_k \asymp k^{a-c}$, $\delta_k \asymp k^{a-c-1}$. As long as $a-c-1 < -c$, which is satisfied if $a < 1$, we have $\delta_k / \gamma_k = o(1)$. We can also check that 
\begin{align*}
    \frac{\delta_k}{\lambda_k} \le (k+k_0+1)^{-1} < \frac{\mu_g \beta_k}{8} = \frac{(k+k_0)^{-c}}{k_0^{1-c}},
\end{align*}
as long as and $c < 1$. Given the above, we have
\begin{align*}
    \sum_{k=0}^{K-1} \frac{\Exs[\|\grad F(x_k)\|^2]}{(k+k_0)^{a}} &\le O_{\texttt{P}} (1) \cdot \sum_{k} \frac{1}{(k+k_0)^{3a-2c}} + O_{\texttt{P}}(\sigma_f^2) \cdot \sum_{k} \frac{1}{(k+k_0)^{-2c-a}} \nonumber \\
    &\quad + O_{\texttt{P}}(\sigma_g^2) \cdot \sum_{k} \frac{1}{(k+k_0)^{-4c+a}} + O_{\texttt{P}}(1). 
\end{align*}
Again, we consider three regimes:

\paragraph{Stochasticity in both upper-level and lower-level objectives: $\sigma_f^2, \sigma_g^2 > 0$.} In this case, we set $a = 3/5, c = 2/5$, and thus $\lambda_k \asymp k^{1/5}$, which yields
\begin{align*}
    \Exs[\|\grad F(x_R)\|^2] \asymp \frac{\log K}{K^{2/5}}.
\end{align*}

\paragraph{Stochasticity only in the upper-level: $\sigma_f^2 > 0, \sigma_g^2 = 0$.}
In this case, we can take $a = 2/4, c = 1/4$, and thus $\lambda_k \asymp k^{1/4}$, which yields
\begin{align*}
    \Exs[\|\grad F(x_R)\|^2] \asymp \frac{\log K}{K^{2/4}}.
\end{align*}

\paragraph{Deterministic case: $\sigma_f^2 = 0, \sigma_g^2 = 0$.}
Here, we can take $a = 1/3, c = 0$ and since there is no stochasticity in the algorithm, we have
\begin{align*}
    \|\grad F(x_K)\|^2 \asymp \frac{\log K}{K^{2/3}}.
\end{align*}

\section{Deferred Proofs for Lemmas}
\label{appendix:deferred_proof}

\subsection{Proofs for Main Lemmas}
\subsubsection{Proof of Lemma \ref{lemma:l_star_lambda_approximate}}
\begin{proof}
    Let $y^*_\lambda(x) := \arg\min_y \mL_{\lambda}(x,y)$. Note that $\grad_y \mL_{\lambda} (x,y^*_\lambda(x)) = 0$, and thus
    \begin{align*}
        \grad \mL_{\lambda}^* (x) &= \grad_x \mL_{\lambda} (x, y_{\lambda}^*(x)) + \grad_x y_{\lambda}^*(x)^\top \grad_y \mL_{\lambda} (x, y_{\lambda}^*(x)) =  \grad_x \mL_{\lambda} (x, y_{\lambda}^*(x)).
    \end{align*}
    To compare this to $\grad F(x)$, we can invoke Lemma \ref{lemma:relation_Lagrangian_F} which gives
    \begin{align*}
        &\| \grad F(x) - \grad_x \mL_\lambda(x,y_{\lambda}^*(x) )  \| \\
        &\qquad \le 2 (l_{g,1}/\mu_g) \|y_{\lambda}^*(x) - y^*(x)\| \left(l_{f,1} + \lambda \cdot \min(2l_{g,1}, l_{g,2} \|y^*(x) -  y_{\lambda}^*(x)\|)\right).
    \end{align*}
    From Lemma \ref{lemma:y_star_lagrangian_continuity}, we use $\|y_{\lambda}^*(x) - y^*(x)\| \le \frac{2l_{f,0}}{\lambda\mu_g}$, and get
    \begin{align*}
        \| \grad F(x) - \grad_x \mL_\lambda(x,y_{\lambda}^*(x) )  \| \le \frac{1}{\lambda} \cdot \frac{4l_{f,0} l_{g,1}}{\mu_g^2} \left( l_{f,1} + \frac{2  l_{f,0} l_{g,2}}{\mu_g} \right). 
    \end{align*}
\end{proof}

\subsubsection{Proof of Lemma \ref{lemma:y_star_lagrangian_continuity}}
\begin{proof}
    Note that $\mL_{\lambda}(x,y)$ is at least $\frac{\lambda\mu_g}{2}$ strongly-convex in $y$ once $\lambda \ge 2 l_{f,1} \mu_g$. To see this,
    \begin{align*}
        \mL_{\lambda}(x,y) = f(x,y) + \lambda (g(x,y) - g^*(x)),
    \end{align*}
    which is at least $-l_{f,1} + \lambda \mu_g$-strongly convex in $y$. If $\lambda > 2l_{f,1} / \mu_g$, this implies at least $\lambda \mu_g / 2$ strong-convexity of $\mL_{\lambda} (x,y)$ in $y$. 

    By the optimality condition at $y_{\lambda_1}^*(x_1)$ with $x_1, \lambda_1$, we have
    \begin{align*}
        \grad_y f(x_1, y_{\lambda_1}^*(x_1) ) + \lambda_1 \grad_y g(x_1, y_{\lambda_1}^*(x_1) ) = 0,
    \end{align*}
    which also implies that $\|g(x_1, y_{\lambda_1}^*(x_1) )\| \le l_{f,0} / \lambda_1$. Observe that
    \begin{align*}
        &\grad_y f(x_2, y_{\lambda_1}^*(x_1)) + \lambda_2 \grad_y g(x_2, y_{\lambda_1}^*(x_1)) \\
        &\quad = (\grad_y f(x_2, y_{\lambda_1}^*(x_1)) - \grad_y f(x_1, y_{\lambda_1}^*(x_1))) + \grad_y f(x_1, y_{\lambda_1}^*(x_1)) \\
        &\qquad + \lambda_2 (\grad_y g(x_2, y_{\lambda_1}^*(x_1)) - \grad_y g(x_1, y_{\lambda_1}^*(x_1))) + \lambda_2 \grad_y g(x_1, y_{\lambda_1}^*(x_1)) \\
        &\quad = (\grad_y f(x_2, y_{\lambda_1}^*(x_1)) - \grad_y f(x_1, y_{\lambda_1}^*(x_1))) + \lambda_2 (\grad_y g(x_2, y_{\lambda_1}^*(x_1)) - \grad_y g(x_1, y_{\lambda_1}^*(x_1))) \\
        &\qquad + (\lambda_2 - \lambda_1) \grad_y g(x_1, y_{\lambda_1}^*(x_1)),
    \end{align*}
    where in the last equality, we applied the optimality condition for $y_{\lambda_1}^*(x_1)$. Then applying the Lipschitzness of $\grad_y f$ and $\grad_y g$ in $x$, we have
    \begin{align*}
        \|\grad_y f(x_2, y_{\lambda_1}^*(x_1)) + \lambda_2 \grad_y g(x_2, y_{\lambda_1}^*(x_1))\| &\le l_{f,1} \|x_1 - x_2\| + l_{g,1} \lambda_2 \|x_2 - x_1\| + (\lambda_2 - \lambda_1) \frac{l_{f,0}}{\lambda_1}. 
    \end{align*}
    Since $\mL_{\lambda_2} (x_2, y)$ is $\lambda_2\mu_g/2$-strongly convex in $y$, from the coercivity property of strongly-convex functions, along with the optimality condition with $y^*_{\lambda_2}(x_2)$, we have
    \begin{align*}
        \frac{\lambda_2 \mu_g}{2} \|y_{\lambda_1}^*(x_1) - y^*_{\lambda_2}(x_2) \| \le \|\grad_y \mL_{\lambda_2} (x_2,y_{\lambda_1}^*(x_1))\| \le (l_{f,1}+\lambda_2 l_{g,1}) \|x_1 - x_2\| + \frac{\lambda_2 - \lambda_1}{\lambda_1} l_{f,0}.
    \end{align*}
    Dividing both sides by $(\lambda_2\mu_g/2)$ concludes the first part of the proof. 
    Note that $y^*(x) = \lim_{\lambda \rightarrow \infty} y_\lambda^*(x)$. Thus, for any $x$ and finite $\lambda \ge 2l_{f,1} / \mu_g$,
    \begin{align*}
        \|y^*_{\lambda} (x) - y^*(x)\| \le \frac{2l_{f,0}}{\lambda\mu_g}. 
    \end{align*}
\end{proof}

\subsection{Proofs for Auxiliary Lemmas}
\subsubsection{Proof of Lemma \ref{lemma:outer_F_smooth}}
\begin{proof}
    The proof can be also found in Lemma 2.2 in \cite{ghadimi2018approximation}. We provide the proof for the completeness. Recall that $\grad F(x)$ is given by
    \begin{align*}
        \grad F(x) = \grad_x f(x,y^*(x)) - \grad_{xy}^2 g(x,y^*(x)) \grad_{yy}^2 g(x,y^*(x))^{-1} \grad_y f(x,y^*(x)).
    \end{align*}
    Using the smoothness of functions and Hessian-continuity of $g$ in assumptions, for any $x_1, x_2 \in X$, we get
    \begin{align*}
        \|\grad F(x_1) - \grad F(x_2)\| &\le \left(l_{f,1}  + \frac{l_{f,0}}{\mu_g} l_{g,2} + \frac{l_{g,1}}{\mu_g} l_{g,1} \right) (\|x_1 - x_2\| + \|y^*(x_1) - y^*(x_2)\|) \\
        &\quad + l_{g,1} l_{f,0} \| \grad_{yy}^2 g(x_1, y^*(x_1))^{-1} -  \grad_{yy}^2 g(x_2, y^*(x_2))^{-1} \| \\
        &\le \left(l_{f,1}  + \frac{l_{f,0}}{\mu_g} l_{g,2} + \frac{l_{g,1}}{\mu_g}\right) l_{*,0} \|x_1 - x_2\|  + \frac{l_{g,1}l_{f,0}}{\mu_g^2} l_{g,2} l_{*,0} \|x_1 - x_2\|.
    \end{align*}
    Thus, 
    \begin{align*}
        l_{F,1} &\le l_{*,0} \left(l_{f,1} + \frac{l_{f,0} l_{g,2} + l_{g,1}^2}{\mu_g} + \frac{l_{f,0}l_{g,1}l_{g,2}}{\mu_g^2} \right) \\
        &\le l_{*,0} \left(l_{f,1} + \frac{l_{g,1}^2}{\mu_g} + \frac{2l_{f,0}l_{g,1}l_{g,2}}{\mu_g^2} \right),
    \end{align*}
    where in the last inequality we used $l_{g,1}/\mu_g \ge 1$.
\end{proof}

\subsubsection{Proof of Lemma \ref{lemma:relation_Lagrangian_F}}
We use a short-hand $y^* = y^*(x)$.
\begin{align*}
    \grad_x \mL_\lambda(x,y) &= \grad_x f(x, y) + \lambda (\grad_x g(x,y) - \grad_x g(x, y^*)) \\
    \grad_y \mL_\lambda(x,y) &= \grad_y f(x, y) + \lambda \grad_y g(x,y).
\end{align*}
Check that 
\begin{align}
    \grad F(x) - \grad_x \mL_\lambda(x,y) &= \grad_x f(x,y^*) - \grad_x f(x,y) \nonumber \\
    &\quad - \grad_{xy}^2 g(x,y^*) \grad_{yy}^{2} g(x,y^*)^{-1} \grad_y f(x,y^*) - \lambda (\grad_x g(x,y) - \grad_x g(x, y^*)).  \label{eq:F_minus_gradx_lambda}
\end{align}
We can rearrange terms for $(\grad_x g(x,y) - \grad_x g(x, y^*))$ as the following:
\begin{align}
    \grad_x g(x,y) - \grad_x g(x, y^*) &= \grad_x g(x,y) - \grad_x g(x, y^*) - \grad_{xy} g(x,y^*)^\top (y-y^*) \nonumber \\
    &\quad + \grad_{xy} g(x,y^*)^\top (y-y^*). \label{eq:lambda_gradx_expension}
\end{align}
Note that from the optimality condition for $y^*$, $\grad_{y} g(x,y^*)=0$ and from $\grad_x f(x,y) + \lambda \grad_y g(x,y) = \grad_y \mL(x,y)$, we can express $y - y^*$ as
\begin{align}
    y-y^* &= -\grad_{yy} g(x,y^*)^{-1} (\grad_y g(x,y) - \grad_y g(x,y^*) - \grad_{yy} g(x,y^*) (y-y^*)) \nonumber \\
    &\quad + \frac{1}{\lambda} \grad_{yy} g(x,y^*)^{-1} \left(\grad_y \mL(x,y) - \grad_y f(x,y)\right). \label{eq:lambda_grady_expension}
\end{align}
Plugging \eqref{eq:lambda_gradx_expension} and \eqref{eq:lambda_grady_expension} back to \eqref{eq:F_minus_gradx_lambda}, we have
\begin{align*}
    \grad F(x) - \grad_x \mL_\lambda(x,y) &= (\grad_x f(x,y^*) - \grad_x f(x,y)) - \grad_{xy}^2 g(x,y^*) \grad_{yy}^{2} g(x,y^*)^{-1} (\grad_y f(x,y^*) - \grad_y f(x,y)) \\
    &\quad - \grad_{xy}^2 g(x,y^*) \grad_{yy}^{2} g(x,y^*)^{-1} \grad_y \mL(x,y) \\
    &\quad - \lambda (\grad_x g(x,y) - \grad_x g(x, y^*) - \grad_{xy}^2 g(x,y^*)^\top (y-y^*)) \\
    &\quad + \lambda \grad_{xy}^2 g(x,y^*) \grad_{yy}^2 g(x,y^*)^{-1} (\grad_y g(x,y) - \grad_y g(x,y^*) - \grad_{yy}^2 g(x,y^*) (y-y^*)).
\end{align*}

By the smootheness of $\grad g$ from Assumption \ref{assumption:nice_functions}, we have
\begin{align*}
    \| \grad_y g(x, y) - \grad_y g(x, y^*) - \grad_{yy}^2 g(x, y^*) (y - y^*) \| \le l_{g,2} \|y - y^*\|^2.
\end{align*}
When $\|y - y^*\|$ is too large, the smoothness of $g$ can be more useful:
\begin{align*}
    \| \grad_y g(x, y) - \grad_y g(x, y^*) - \grad_{yy}^2 g(x, y^*) (y - y^*) \| \le 2 l_{g,1} \|y - y^*\|. 
\end{align*}
Similarly, we have
\begin{align*}
    \| \grad_x g(x, y) - \grad_x g(x, y^*) - \grad_{xy}^2 g(x, y^*)^\top (y - y^*) \| \le \min \left(l_{g,2} \|y - y^*\|^2, 2l_{g,1} \|y-y^*\| \right). 
\end{align*}

On the other hand, by smootheness of $f$, we also have
\begin{align*}
    \|\grad_x f(x,y^*) - \grad_x f(x,y) \| \le l_{f,1} \|y - y^*\|, \ \|\grad_y f(x,y^*) - \grad_y f(x,y) \| \le l_{f,1} \|y - y^*\|.  
\end{align*}
We can conclude that
\begin{align*}
    &\| \grad F(x) - \grad_x \mL_\lambda(x,y) + \grad_{xy}^2 g(x,y^*) \grad_{yy}^{2} g(x,y^*)^{-1} \grad_y \mL(x,y) \| \\
    &\qquad \le l_{f,1} (1 + l_{g,1}/\mu_g) \|y - y^*\| + \lambda (1 + l_{g,1}/\mu_g) \|y-y^*\| \min(l_{g,2} \|y-y^*\|, 2l_{g,1}).
\end{align*}
We know that $l_{g,1}/\mu_g \ge 1$ and thus, we have
\begin{align*}
    &\| \grad F(x) - \grad_x \mL_\lambda(x,y) + \grad_{xy}^2 g(x,y^*) \grad_{yy}^{2} g(x,y^*)^{-1} \grad_y \mL(x,y) \| \\
    &\qquad \le 2 (l_{g,1}/\mu_g) \|y-y^*\| \left(l_{f,1} + \lambda \cdot \min(2l_{g,1}, l_{g,2} \|y-y^*\|)\right),
\end{align*}
yielding the lemma.

\subsubsection{Proof of Lemma \ref{lemma:nice_y_star_lagrangian}}
\begin{proof}
    Lipschitzness of $y_{\lambda}^*(x)$ is immediate from Lemma \ref{lemma:y_star_lagrangian_continuity}. By the optimality condition for $\grad y_\lambda^*(x)$, we have
    \begin{align*}
        \grad_y \mL_{\lambda} (x, y_{\lambda}^*(x)) = \grad_y f (x, y_{\lambda}^*(x)) + \lambda \grad_y g (x, y_{\lambda}^*(x)) = 0.
    \end{align*}
    Taking derivative with respect to $x$, we get
    \begin{align*}
        (\grad_{yy}^2 f(x, y_{\lambda}^*(x)) + \lambda \grad_{yy}^2 g(x, y_{\lambda}^*(x))) \grad y_{\lambda}^*(x) = -(\grad_{xy}^2 f(x, y_{\lambda}^*(x)) + \lambda \grad_{xy}^2 g(x, y_{\lambda}^*(x))).
    \end{align*}
    As $\lambda > 2l_{f,1} / \mu_g$, the left-hand side is positive definite with mininum eigenvalue larger than $\lambda \mu_g / 2$, and we have
    \begin{align*}
        \grad y_{\lambda}^*(x) = - \left(\frac{1}{\lambda} \grad_{yy}^2 f(x, y_{\lambda}^*(x)) + \grad_{yy}^2 g(x, y_{\lambda}^*(x)) \right)^{-1} \left( \frac{1}{\lambda} \grad_{xy}^2 f(x, y_{\lambda}^*(x)) + \grad_{xy}^2 g(x, y_{\lambda}^*(x)) \right).
    \end{align*}
    To get the smoothness result, we compare this at $x_1$ and $x_2$, yielding
    \begin{align*}
        \frac{\lambda \mu_g}{2} \|\grad y_{\lambda}^*(x_1) - \grad y_{\lambda}^*(x_2)\| &\le  (l_{f,2} + \lambda l_{g,2}) (\|x_1 - x_2\| + \|y_{\lambda}^*(x_1) - y_{\lambda}^*(x_2)\|) \max_{x \in X} \|\grad y_{\lambda}^*(x)\| \\
        &\quad + (l_{f,2} + \lambda l_{g,2}) (\|x_1 - x_2\| + \|y_{\lambda}^*(x_1) - y_{\lambda}^*(x_2)\|) \\
        &\le (l_{f,2} + \lambda l_{g,2}) (1 + l_{\lambda,0})^2 \|x_1 - x_2\|. 
    \end{align*}
    Arranging this, we get
    \begin{align*}
        \|\grad y_{\lambda}^*(x_1) - \grad y_{\lambda}^*(x_2)\| \le 32 \left(\frac{l_{f,2}}{\lambda} + l_{g,2}\right) \frac{l_{g,1}^2}{\mu_g^3} \|x_1-x_2\|.
    \end{align*}
\end{proof}

\subsubsection{Proof of Lemma \ref{lemma:y_star_contraction}} 
    \begin{proof}
    This is immediate from Lipschitz continuity in Lemma \ref{lemma:y_star_lagrangian_continuity} with sending $\lambda_1 = \lambda_2$ to infinity. 
    \begin{align*}
        \Exs[\|y^*(x_{k+1}) - y^*(x_k)\|^2 | \mathcal{F}_k] &\le l_{*,0}^2 \Exs[\|x_{k+1} - x_k\|^2 | \mathcal{F}_k] \\
        &\le l_{*,0}^2 \xi^2 \alpha_k^2 (\Exs[\|q_k^x\|^2|\mF_k] + \alpha_k^2 \sigma_f^2 + \beta_k^2 \sigma_g^2). 
    \end{align*}
\end{proof}

\subsubsection{Proof of Lemma \ref{lemma:y_star_smoothness_bound}}
\label{appendix:proof:y_star_smoothness_bound}
\begin{proof}
    We can use the smoothness property of $y^*(x)$ as in \cite{chen2021closing}, which is crucial to control the noise variance induced from updating $x$. We can start with the following:
    \begin{align*}
        \vdot{v_k}{y_{k+1}^* - y_k^*} &= \vdot{v_k}{\grad y^*(x_k) (x_{k+1} - x_k)} \\
        &\quad + \vdot{v_k}{y^*(x_{k+1}) - y^*(x_k) - \grad y^*(x_k) (x_{k+1} - x_k)}.
    \end{align*}
    On the first term, taking expectation and using $\vdot{a}{b} \le c \|a\|^2 + \frac{1}{4c} \|b\|^2$,
    \begin{align*}
        \Exs[\vdot{v_k}{\grad y^*(x_k) (x_{k+1} - x_k)} | \mathcal{F}_k] &= -\xi\alpha_k \Exs[\vdot{v_k}{\grad y^*(x_k) q_k^x } | \mathcal{F}_k] \\
        &\le \xi\alpha_k \eta_k \Exs[\|v_k\|^2 | \mathcal{F}_k] + \frac{\xi \alpha_k}{4\eta_k} \Exs[\|\grad y^*(x_k) q_k^x\|^2 | \mathcal{F}_k] \\
        &\le \xi \alpha_k \eta_k \Exs[\|v_k\|^2| \mathcal{F}_k] + \frac{\xi\alpha_k l_{*,0}^2}{4\eta_k} \Exs[\|q_k^x\|^2 | \mathcal{F}_k],
    \end{align*}
    where we used the Lipschitz continuity of $y^*(x)$. For the second term, using smoothness of $y^*(x)$, 
    \begin{align*}
        & \Exs[\vdot{v_k}{y^*(x_{k+1}) - y^*(x_k) - \grad y^*(x_k) (x_{k+1} - x_k)} | \mathcal{F}_k] \\
        &\le \frac{l_{*,1}}{2} \Exs[ \|v_k\| \|x_{k+1}-x_k\|^2 | \mathcal{F}_k] \\
        &\le \frac{l_{*,1}}{4} \Exs\left[ \left(l_{*,1} \|v_k\|^2 + \frac{1}{l_{*,1}} \right) \cdot \|x_{k+1}-x_k\|^2 | \mathcal{F}_k \right] \\
        &\le \frac{l_{*,1}^2 }{4} \Exs[\|v_k\|^2 \cdot \Exs\left[\|x_{k} - x_{k+1}\|^2 | \mathcal{F}_{k}' \right] | \mathcal{F}_k] \\
        &\quad + \frac{\xi^2}{4} \left(\alpha_k^2 \Exs[\|q_k^x\|^2]  + \alpha_k^2 \sigma_f^2 + \beta_k^2 \sigma_g^2 \right),
    \end{align*}
    where $\mathcal{F}_k'$ is a sigma-algebra generated by stochastic noises up to $k^{th}$ iteration and $v_k$. Note that 
    \begin{align*}
        \Exs\left[\|x_{k} - x_{k+1}\|^2 | \mathcal{F}_{k}' \right] &\le \xi^2 \alpha_k^2 \Exs\left[\|q_k^x\|^2 | \mathcal{F}_{k} \right] + \xi^2 (\alpha_k^2 \sigma_f^2 + \beta_k^2 \sigma_g^2),
    \end{align*}
    and from boundedness of $\grad_x f$ and $\grad_x g$ in Assumption \ref{assumption:bounded_grad_x}, we have $\alpha_k \|q_k^x\| \le \alpha_k l_{f,0} + 2 \beta_k l_{g,0}$. With $M_f = l_{f,0}^2 + \sigma_f^2$, $M_g = l_{g,0}^2 + \sigma_g^2$, and $M = \max(M_f, M_g)$, we get
    \begin{align*}
        \Exs\left[\|x_{k} - x_{k+1}\|^2 | \mathcal{F}_{k}' \right] \le 2 \xi^2 (M_f \alpha_k^2 + 2 M_g \beta_k^2) \le 4M\xi^2 l_{*,1}^2 \beta_k^2, 
    \end{align*}
    which yields
    \begin{align*}
        & \Exs[\vdot{v_k}{y^*(x_{k+1}) - y^*(x_k) - \grad y^*(x_k) (x_{k+1} - x_k)} | \mathcal{F}_k] \\
        &\quad \le M \xi^2 l_{*,1}^2 \beta_k^2 \Exs[\|v_k\|^2 | \mathcal{F}_k]  
        + \frac{\xi^2}{4} \left(\alpha_k^2 \Exs[\|q_k^x\|^2 | \mathcal{F}_k ]  + \alpha_k^2 \sigma_f^2 + \beta_k^2 \sigma_g^2 \right). 
    \end{align*}
    Combining all, we obtain the desired result. 
\end{proof}

\subsubsection{Proof of Lemma \ref{lemma:y_star_lambda_vdot_bound}}
\label{appendix:proof:y_star_lambda_smoothness_bound}
\begin{proof}
    We can start with the following decomposition:
    \begin{align*}
        \vdot{v_k}{y_{\lambda_{k+1}}^*(x_{k+1}) - y_{\lambda_k}^* (x_k)} &= \vdot{v_k}{y_{\lambda_{k+1}}^*(x_{k+1}) - y_{\lambda_k}^* (x_{k+1})} \\
        &\quad + \vdot{v_k}{\grad y_{\lambda_k}^*(x_k) (x_{k+1} - x_k)} \\
        &\quad + \vdot{v_k}{y_{\lambda_k}^*(x_{k}) - y_{\lambda_k}^*(x_k) - \grad y_{\lambda_{k}}^*(x_k) (x_{k+1} - x_k)}.
    \end{align*}
    For the second and third terms, we can apply the smoothness of $y_{\lambda}(x)$ similarly in the proof in \ref{appendix:proof:y_star_smoothness_bound}. 
    
    On the first term, taking expectation and using $\vdot{a}{b} \le c \|a\|^2 + \frac{1}{4c} \|b\|^2$,
    \begin{align*}
        \Exs[\vdot{v_k}{y_{\lambda_{k+1}}^*(x_{k+1}) - y_{\lambda_k}^* (x_{k+1})} | \mathcal{F}_k]
        &\le c \Exs[\|v_k\|^2] + \frac{1}{4c} \Exs[\|y_{\lambda_{k+1}}^*(x_{k+1}) - y_{\lambda_k}^* (x_{k+1})\|^2 ] \\
        &\le c \Exs[\|v_k\|^2] + \frac{1}{c} \frac{\delta_k^2}{\lambda_k^2 \lambda_{k+1}^2} \frac{l_{f,0}^2}{\mu_g^2},
    \end{align*}
    where we applied Lemma \ref{lemma:y_star_lagrangian_continuity}. Take $c = \frac{\delta_k}{\lambda_k}$, getting
    \begin{align*}
        \Exs[\vdot{v_k}{y_{\lambda_{k+1}}^*(x_{k+1}) - y_{\lambda_k}^* (x_{k+1})} | \mathcal{F}_k]
        &\le \frac{\delta_k}{\lambda_k} \Exs[\|v_k\|^2] + \frac{l_{f,0}^2 \delta_k}{\mu_g^2 \lambda_k^3}. 
    \end{align*}
    Adding this with bounds on other two terms, we get the lemma.
\end{proof}

\subsection{Proofs for Auxiliary Lemmas with Momentum}

\subsubsection{Proof of Lemma \ref{lemma:y_star_contraction_momentum}}
Due to Lipschitz continuity of $y^*(x)$, we have
\begin{align*}
    \Exs[\|y^*(x_{k+1}) - y^*(x_k)\|^2] &\le l_{*,0}^2 \Exs[\|x_{k+1} - x_k\|^2] \\
    &\le \xi^2 \alpha_k^2 l_{*,0}^2 \Exs[\|q_k^x + \tilde{e}_k^x\|^2] \le 2\xi^2 \alpha_k^2 l_{*,0}^2 (\Exs[\|q_k^x\|^2] + \Exs[\|\tilde{e}_k^x\|^2]).
\end{align*}

\subsubsection{Proof of Lemma \ref{lemma:y_star_lambda_contraction_momentum}}
Using Lemma \ref{lemma:y_star_lagrangian_continuity}, we have
\begin{align*}
    \Exs[\|y_{\lambda_{k+1}}^*(x_{k+1}) - y_{\lambda_{k}}^*(x_k)\|^2] &\le \frac{8\delta_k^2}{\lambda_k^2\lambda_{k+1}^2} + 2 l_{\lambda,0}^2 \Exs[\|x_{k+1} - x_k\|^2] \\
    &\le 4 \xi^2 \alpha_k^2 l_{*,0}^2 (\Exs[\|q_k^x\|^2] + \Exs[\|\tilde{e}_k^x\|^2]) + \frac{8\delta_k^2}{\lambda_k^4}.
\end{align*}

\subsubsection{Proof of Lemma \ref{lemma:descent_stochastic_noise_yz}}
\label{appendix:proof_stochastic_noise_yz}
We can start with unfolding the expression for $\Exs[\|\tilde{e}_{k+1}^z\|^2]$. 
\begin{align*}
    \Exs[\| \tilde{e}_{k+1}^z\|^2] &= \Exs[\| \tilde{h}_{z}^{k+1} - q_{k+1}^z\|^2] \\
    &= \Exs[\| \grad_y g(x_{k+1}, z_{k+1}; \phi_{z}^{k+1}) + (1 - \eta_{k+1}) (\tilde{h}_{z}^{k} - \grad_y g(x_k, z_k; \phi_z^{k+1})) - q_{k+1}^z\|^2] \\
    &= \Exs[\| (1 - \eta_{k+1}) \tilde{e}_k^z + \grad_y g(x_{k+1}, z_{k+1}; \phi_{z}^{k+1}) \\
    &\qquad + (1 - \eta_{k+1}) (q_k^z - \grad_y g(x_k, z_k; \phi_z^{k+1})) - q_{k+1}^z\|^2] \\
    &= (1 - \eta_{k+1})^2 \Exs[\| \tilde{e}_k^z\|^2 ] + \Exs[\|\eta_k (\grad_y g(x_{k+1}, z_{k+1}; \phi_{z}^{k+1}) - q_{k+1}^z) \\
    &\qquad + (1-\eta_{k+1}) (\grad_y g(x_{k+1}, z_{k+1}; \phi_{z}^{k+1}) - \grad_y g(x_k, z_k; \phi_{z}^{k+1}) + q_{k}^z - q_{k+1}^z) \|^2].
\end{align*}
In the last equality, we used
\begin{align*}
    \Exs[ \Exs[ \vdot{\tilde{e}_k^{z}}{\grad_y g(x_{k+1}, z_{k+1}; \phi_{z}^{k+1}) - q_{k+1}^z} | \mathcal{F}_{k+1}] ] &= 0, \\
    \Exs[ \Exs[ \vdot{\tilde{e}_k^{z}}{\grad_y g(x_{k}, z_{k}; \phi_{z}^{k+1}) - q_k^z} | \mathcal{F}_{k+1}] ] &= 0.
\end{align*}
Also note that 
\begin{align*}
    \Exs[\|\grad_y g(x_{k+1}, z_{k+1}; \phi_{z}^{k+1}) - q_{k+1}^z\|^2] \le \sigma_g^2,
\end{align*}
from the variance boundedness (Assumption \ref{assumption:gradient_variance}). We also observe that
\begin{align*}
    \Exs[\|\grad_y g(x_{k+1}, z_{k+1};\phi_{z}^{k+1}) - \grad_y g(x_k, z_k; \phi_{z}^{k+1}) \|^2] &\le l_{g,1}^2 (\|x_{k+1} - x_k\|^2 + \|z_{k+1} - z_k\|^2) \\
    &= l_{g,1}^2 (\xi^2 \alpha_k^2 \|q_k^x + \tilde{e}_k^x\|^2 + \gamma_k^2 \|q_k^z + \tilde{e}_k^z\|^2), 
\end{align*}
due to Assumption \ref{assumption:nice_stochastic_fg}. The same inequality holds for $q_{k+1}^z - q_k^z$:
\begin{align*}
    \Exs[\|q_{k+1}^z - q_k^z \|^2] \le l_{g,1}^2 (\xi^2 \alpha_k^2 \|q_k^x + \tilde{e}_k^x\|^2 + \gamma_k^2 \|q_k^z + \tilde{e}_k^z\|^2). 
\end{align*}
Now we plug these inequalities and using $\|a+b\|^2 \le 2(\|a\|^2 + \|b\|^2)$ multiple times, we have
\begin{align*}
    \Exs[\|\tilde{e}_{k+1}^z\|^2] &\le (1 - \eta_{k+1})^2 (1 + 8 l_{g,1}^2 \gamma_k^2) \Exs[\|\tilde{e}_k^z\|^2] + 2 \eta_{k+1}^2 \sigma_g^2 \\
    &\qquad + 8l_{g,1}^2 (1-\eta_{k+1})^2 \left( \xi^2\alpha_k^2 \Exs[\|q_k^x\|^2] + \xi^2\alpha_k^2 \Exs[\|\tilde{e}_k^x\|^2] + \gamma_k^2 \Exs[\|q_k^z\|^2] \right).
\end{align*}

Similarly, we can repeat similar steps for $\tilde{e}_{k+1}^y$. To simplify the notation, with slight abuse in notation, we let $q_k^y(\zeta, \phi) := \grad_y f(x_{k}, y_{k};\zeta) + \lambda_{k} \grad_y g (x_k, y_k; \phi)$. Note that $q_k^y = \Exs[q_k^y(\zeta, \phi)]$. Then we can get a similar bound for $\Exs[\| \tilde{e}_{k+1}^y \|^2]$:
\begin{align*}
    \Exs[\| \tilde{e}_{k+1}^y\|^2 ] &\le (1-\eta_{k+1})^2 \Exs[\| \tilde{e}_{k}^y\|^2 ] + 2 \eta_{k+1}^2 \Exs[\| q_{k+1}^y(\zeta_{y}^{k+1}, \phi_{y}^{k+1}) - q_{k+1}^y \|^2] \\
    &\qquad + 2(1-\eta_{k+1})^2 \Exs[\| (q_{k+1}^y (\zeta_{y}^{k+1}, \phi_{y}^{k+1}) - q_k^y (\zeta_{y}^{k+1}, \phi_{y}^{k+1})) + (q_k^y - q_{k+1}^y)\|^2 ].
\end{align*}
Using the variance bound similarly, we have
\begin{align*}
    \Exs[\| q_{k+1}^y(\zeta_{y}^{k+1}, \phi_{y}^{k+1}) - q_{k+1}^y \|^2] \le \sigma_f^2 + \lambda_{k+1}^2 \sigma_g^2.
\end{align*}
Then, we unfold the last term such that
\begin{align*}
    \Exs[&\| (q_{k+1}^y (\zeta_{y}^{k+1}, \phi_{y}^{k+1}) - q_k^y (\zeta_{y}^{k+1}, \phi_{y}^{k+1})) + (q_k^y - q_{k+1}^y)\|^2 ] \\
    &= \Exs[\| (\grad_y f(x_{k+1}, y_{k+1}; \zeta_y^{k+1}) - \grad_y f(x_{k}, y_{k}; \zeta_y^{k+1}) + \grad_y f(x_{k}, y_k) - \grad_y f(x_{k+1}, y_{k+1})) \\
    &\quad + \lambda_k (\grad_y g(x_{k+1}, y_{k+1}; \phi_y^{k+1}) - \grad_y g(x_k, y_k; \phi_y^{k+1}) + \grad_y g(x_k, y_k) - \grad_y g(x_{k+1}, y_{k+1})) \\
    &\quad + \delta_k (\grad_y g(x_{k+1},y_{k+1}; \phi_{y}^{k+1}) - \grad_y g(x_{k+1}, y_{k+1}) + \grad_y g(x_k,y_k) - \grad_y g(x_k, y_k; \phi_{y}^{k+1}) ) \|^2] \\
    &\le 12 (l_{f,1}^2  + l_{g,1}^2 \lambda_k^2) (\|x_{k+1} - x_k\|^2 + \|y_{k+1} - y_k\|^2) + 12 \delta_k^2 \sigma_g^2 \\
    &\le 24 (l_{f,1}^2 \alpha_k^2 + l_{g,1}^2 \beta_k^2) (\xi^2 \|q_k^x\|^2 + \xi^2 \|\tilde{e}_k^x\|^2 + \|q_k^y\|^2 + \|\tilde{e}_k^y\|^2) + 12 \delta_k^2 \sigma_g^2.
\end{align*}
We note that we set $\lambda_k \ge 2l_{f,1} / \mu_g$, and thus $l_{f,1} \le \lambda_k l_{g,1}$. In total, we get
\begin{align*}
    \Exs[\| \tilde{e}_{k+1}^y\|^2 ] &\le (1-\eta_{k+1})^2 (1 + 96 l_{g,1}^2 \beta_k^2 ) \Exs[\| \tilde{e}_{k}^y\|^2 ] + 2 \eta_{k+1}^2 (\sigma_f^2 + \lambda_{k+1}^2 \sigma_g^2) + 24 \delta_k^2 \sigma_g^2 \\
    &\qquad + 96 (1-\eta_{k+1})^2 l_{g,1}^2 \beta_k^2 (\xi^2 \|q_k^x\|^2 + \xi^2 \|\tilde{e}_k^x\|^2 + \|q_k^y\|^2).
\end{align*}

\subsubsection{Proof of Lemma \ref{lemma:descent_stochastic_noise_x}}
\label{appendix:proof_stochastic_noise_x}
Similarly to the case for $\|\tilde{e}_{k+1}^y\|^2$, let us define $q_k^x (\zeta, \phi) := \grad_x f(x_k, y_{k+1}; \zeta) + \lambda_k (\grad_x g(x_k, y_{k+1}; \phi) - \grad_x g(x_k, z_{k+1}; \phi))$. We note that $\zeta_x^k, \phi_{x}^k$ are sampled after $y_{k+1}, z_{k+1}$ is updated but before $x_k$ is updated. Hence, 
\begin{align*}
    \Exs[\Exs[\vdot{\tilde{e}_k^x}{q_{k+1}^x(\zeta_x^{k+1}, \phi_{x}^{k+1}) - q_{k+1}^x} | \mathcal{F}_{k+1}']] = 0, \\
    \Exs[\Exs[\vdot{\tilde{e}_k^x}{q_{k}^x(\zeta_x^{k+1}, \phi_{x}^{k+1}) - q_{k}^x} | \mathcal{F}_{k+1}']] = 0.
\end{align*}
Thus, following similar procedure, we have
\begin{align*}
    \Exs[\| \tilde{e}_{k+1}^x\|^2] &= \Exs[\| q_{k+1}^x(\zeta_x^{k+1}, \phi_{x}^{k+1}) + (1-\eta_{k+1}) (q_k^x + \tilde{e}_k^x - q_{k}^x (\zeta_x^{k+1}, \phi_{x}^{k+1}) ) - q_{k+1}^x \|^2] \\
    &\le (1 - \eta_{k+1})^2 \Exs[\| \tilde{e}_k^x\|^2 ] + 2\eta_k^2 \Exs[\|q_{k+1}^x (\zeta_x^{k+1}, \phi_{x}^{k+1}) - q_{k+1}^x\|^2] \\
    &\qquad + 2(1-\eta_{k+1})^2 \Exs[\| (q_{k+1}^x (\zeta_{x}^{k+1}, \phi_{x}^{k+1}) - q_k^x (\zeta_{x}^{k+1}, \phi_{x}^{k+1})) + (q_k^x - q_{k+1}^x)\|^2 ].
\end{align*}
Note that
\begin{align*}
    \Exs[&\|q_{k+1}^x (\zeta_x^{k+1}, \phi_{x}^{k+1}) - q_{k+1}^x\|^2] \\
    &= \Exs[\|(\grad_x f(x_{k+1}, y_{k+2}; \zeta_x^{k+1}) - \grad_x f(x_{k+1}, y_{k+2})) \\
    &\quad + \lambda_k (\grad_x g(x_{k+1}, y_{k+2}; \phi_x^{k+1}) - \grad_x g(x_{k+1}, y_{k+2})) + \lambda_k (\grad_x g(x_{k+1}, z_{k+2}; \phi_x^{k+1}) - \grad_x g(x_{k+1}, z_{k+2}))\|^2] \\
    &\le 3 (\sigma_f^2 + \lambda_k^2 \sigma_g^2). 
\end{align*}
Finally, we have
\begin{align*}
    \Exs[&\| (q_{k+1}^x (\zeta_{x}^{k+1}, \phi_{x}^{k+1}) - q_k^x (\zeta_{x}^{k+1}, \phi_{x}^{k+1})) + (q_k^x - q_{k+1}^x)\|^2 ] \\
    &= \Exs[\| (\grad_x f(x_{k+1}, y_{k+2}; \zeta_x^{k+1}) - \grad_x f(x_{k}, y_{k+1}; \zeta_x^{k+1}) + \grad_y f(x_{k}, y_{k+1}) - \grad_y f(x_{k+1}, y_{k+2})) \\
    &\quad + \lambda_k (\grad_x g(x_{k+1}, y_{k+2}; \phi_x^{k+1}) - \grad_x g(x_k, y_{k+1}; \phi_x^{k+1}) + \grad_x g(x_k, y_{k+1}) - \grad_x g(x_{k+1}, y_{k+2})) \\
    &\quad + \lambda_k (\grad_x g(x_{k+1}, z_{k+2}; \phi_x^{k+1}) - \grad_x g(x_k, z_{k+1}; \phi_x^{k+1}) + \grad_x g(x_k, z_{k+1}) - \grad_x g(x_{k+1}, z_{k+2})) \\
    &\quad + \delta_k (\grad_y g(x_{k+1},y_{k+2}; \phi_{x}^{k+1}) - \grad_y g(x_{k+1}, y_{k+2}) + \grad_x g(x_k,y_{k+1}) - \grad_x g(x_k, y_{k+1}; \phi_{x}^{k+1}) ) \\
    &\quad + \delta_k (\grad_x g(x_{k+1},z_{k+2}; \phi_{x}^{k+1}) - \grad_x g(x_{k+1}, z_{k+2}) + \grad_x g(x_k,z_{k+1}) - \grad_x g(x_k, z_{k+1}; \phi_{x}^{k+1}) ) \|^2].
\end{align*}
Using Cauchy-Schwartz inequality, we get
\begin{align*}
    \Exs[&\| (q_{k+1}^x (\zeta_{x}^{k+1}, \phi_{x}^{k+1}) - q_k^x (\zeta_{x}^{k+1}, \phi_{x}^{k+1})) + (q_k^x - q_{k+1}^x)\|^2 ] \\
    &\le 30(l_{f,1}^2 + l_{g,1}^2 \lambda_k^2) (\|x_{k+1} - x_k\|^2 + \|y_{k+2} - y_{k+1}\|^2 + \|z_{k+2} - z_{k+1}\|^2) + 40 \delta_k^2 \sigma_g^2 \\
    &\le 120 l_{g,1}^2 \lambda_k^2 (\xi^2 \alpha_k^2 (\|q_k^x\|^2 + \|\tilde{e}_k^x\|^2) + \alpha_{k+1}^2 (\|q_k^y\|^2 + \|\tilde{e}_k^y\|^2) + \gamma_{k+1}^2 (\|q_k^z\|^2 + \|\tilde{e}_k^z\|^2)) + 40 \delta_k^2 \sigma_g^2.
\end{align*}
Combining all, we obtain the result.

\end{appendices}

\end{document}